\titleformat{\section}[block]{\large \scshape \filcenter}{\thesection.}{0.5em}{}
\titleformat{\subsection}[runin]{\normalfont \bfseries}{\thesubsection.}{0.5 em}{}
\titleformat{\subsubsection}[runin]{\normalfont \bfseries}{\thesubsubsection.}{0.5 em}{}
\newtheorem{thm}{Theorem}
\newtheorem{prop}[thm]{Proposition}
\newtheorem{lemme}{Lemma}
\newtheorem{cor}{Corollary}
\newtheorem{hyp}{Assumption}
\newtheorem{Claim}{Claim}
\newtheorem{exemple}{Example}
\renewcommand{\d}{\, \mathrm{d}}
\newcommand{\R}{\mathbb{R}}
\renewcommand{\P}{\mathbb{P}\,}
\newcommand{\N}{\mathbb{N}}
\newcommand{\E}{\mathbb{E}}
\newcommand{\V}{\mathbb{V}}
\newcommand{\A}{\mathcal{A}}
\newcommand{\F}{\mathscr{F}}
\newcommand{\EE}{\mathcal{E}}
\renewcommand{\L}{\mathbb{L}}
\newcommand{\M}{\mathcal{M}}
\newcommand{\var}{\mathrm{ var }}
\newcommand{\pen}{\mathrm{ pen }}
\newcommand{\I}{{\mathbf{I}}}
\newcommand{\Z}{{\mathbb{Z}}}
\newcommand{\W}{{\mathbb{W}}}
\newcommand{\LL}{\mathbb{L}}
\newcommand{\XX}{\mathbb{X}}
\newcommand{\TT}{\mathbb{T}}
\newcommand{\1}{\mathbbm{1}}
\def\restriction#1#2{\mathchoice
              {\setbox1\hbox{${\displaystyle #1}_{\scriptstyle #2}$}
              \restrictionaux{#1}{#2}}
              {\setbox1\hbox{${\textstyle #1}_{\scriptstyle #2}$}
              \restrictionaux{#1}{#2}}
              {\setbox1\hbox{${\scriptstyle #1}_{\scriptscriptstyle #2}$}
              \restrictionaux{#1}{#2}}
              {\setbox1\hbox{${\scriptscriptstyle #1}_{\scriptscriptstyle #2}$}
              \restrictionaux{#1}{#2}}}
\def\restrictionaux#1#2{{#1\,\smash{\vrule height .8\ht1 depth .85\dp1}}_{\,#2}} 
\def\hlinewd#1{%
\noalign{\ifnum0=`}\fi\hrule \@height #1 %
\futurelet\reserved@a\@xhline}
\title{Estimation of the transition density of a Markov chain}
\date{October, 2012} 
\author{Mathieu Sart} 
\address{Université de Nice Sophia-Antipolis, Laboratoire J-A Dieudonné, Parc Valrose, 06108 Nice
cedex 02, France.}
\email{msart@unice.fr}
\keywords{Adaptive estimation, Markov chain, Model selection, Robust tests, Transition density.} 
\subjclass[2010]{62M05, 62G05}
\begin{document}
\maketitle
\begin{abstract}
We present two data-driven procedures to estimate the transition density of an homogeneous Markov chain. The first yields to a piecewise constant estimator on a suitable random partition. 
By using an Hellinger-type loss, we establish non-asymptotic risk bounds for our estimator when the square root of the transition density belongs to possibly inhomogeneous Besov spaces  with possibly small regularity index. Some simulations are also provided. The second procedure is of theoretical interest and leads to a general model selection theorem from which we derive rates of convergence over a very wide range of possibly inhomogeneous and anisotropic Besov spaces. We also investigate the rates that can be achieved under structural assumptions on the transition density.
\end{abstract}

\section{Introduction.}
Consider a time-homogeneous Markov chain $(X_i)_{i \in \N}$ defined on an abstract probability space $(\Omega, \EE, \P)$ with values in the measured space $(\XX, \mathcal{F}, \mu)$. We assume that   for each $x \in \XX$, the conditional law $\mathcal{L} (X_{i+1} \mid X_i = x)$ admits a density $s (x, \cdot)$ with respect to~$\mu$. 
Our aim is to estimate the transition density $(x,y) \mapsto s (x,y)$ on a subset $A = A_1 \times A_2$ of $\XX^2$ from the observations $X_0,\dots,X_{n}$.

Many papers are devoted to this statistical setting. A popular method to build an estimator of $s$ is to   divide an estimator of the joint density of $(X_i,X_{i+1})$ by an estimator of the density of $X_i$.  The resulting estimator is called a quotient estimator.
\cite{Roussas1969},~\cite{Athreya1998} considered Kernel estimators for the densities of $X_i$ and $(X_i,X_{i+1})$. They proved consistence  and asymptotic normality of the quotient estimator. Other properties of this estimator were established:~\cite{Roussas1991},~\cite{Dorea2002} showed strong consistency,~\cite{Basu1998} proved a Berry-Essen type theorem and \cite{Doukhan1983} bounded from above the integrated quadratic risk under Sobolev constraints.
\cite{Clemenccon2000} investigated  the minimax rates when  $A = [0,1]^2$, $\XX^2 = \R^2$. Given two smoothness classes $\F_1$ and~$\F_2$ of real valued functions on~$[0,1]^2$ and~$[0,1]$ respectively (balls of Besov spaces), he  established the lower bounds over the class 
  $$\mathscr{F} = \left\{ \varphi, \; \forall x,y \in [0,1],   \varphi (x,y) = \frac{\varphi_1(x,y)}{ \varphi_2(x)} , \, (\varphi_1,\varphi_2) \in \F_1 \times \F_2 \right\}.$$
He developed a method based on wavelet thresholding to estimate the densities of $X_i$ and $(X_i,X_{i+1})$ and showed that the quotient estimator of $s$ is quasi-optimal in the sense that the minimax rates are achieved up to possible logarithmic factors. 
\cite{Lacour2008} used model selection via penalization to construct estimates of the densities. The resulting quotient estimator reaches the minimax rates over $\F$ when  $\F_1$ and $\F_2$ are balls of homogeneous (but possibly anisotropic)  Besov spaces on $[0,1]^2$ and $[0,1]$ respectively.

The previous rates of convergence depend on the smoothness properties of the densities of $X_i$ and $(X_i,X_{i+1})$. 
In the favourable case where $X_0,\dots,X_n$ are drawn from a stationary Markov chain  (with stationary density $f$), the rates depend on the smoothness properties of $f$ or more precisely on the restriction of $f$ to $A_1$.  This function may  however be less regular than the target function $s$. We refer for instance to Section 5.4.1 of~\cite{Clemenccon2000} for an example of a Doeblin recurrent Markov chain where the stationary density $f$ is discontinuous on $[0,1]$ although~$s$ is constant on $[0,1]^2$. Therefore, these estimators may converge slowly  even if $s$ is smooth, which is problematic. 

This issue was overcome in several papers. \cite{Clemenccon2000} proposed a second procedure, based on wavelets and an analogy with the regression setting. He computed the lower bounds of minimax rates when the restriction of $s$ on $[0,1]^2$ belongs to  balls of some (possibly inhomogenous) Besov spaces and proved that its estimator achieves these rates up to a possible logarithmic factor.
\cite{Lacour2007} established lower bound over balls of some (homogenous but possibly anisotropic) Besov spaces.
By minimizing a penalized contrast inspired from the least-squares, she obtained a model selection theorem from which she deduced that her estimator reaches the minimax rates when $A = [0,1]^2$, $\XX^2 = \R^2$.
With a similar procedure, \cite{Akakpo2011} obtained the usual rates of convergence over balls of possibly anisotropic and inhomogeneous Besov spaces (when $\XX^2 = A = [0,1]^{2 d}$). Very recently,~\cite{Birge2012} proposed a procedure based on robust testing to establish a general oracle inequality. The expected rates of convergence can be deduced from this inequality when~$\sqrt{s}$ belongs to balls of possibly anisotropic and inhomogeneous Besov spaces.

These authors have used different losses in order to evaluate the performance of their estimators. In each of these papers, the risk of an estimator $\hat{s}$ is of the form $\E\left[\delta^2(s \1_A, \hat{s})\right]$  where~$\1_A$ denotes the indicator function of the subset $A$ and  $\delta$  a suitable distance.
\cite{Lacour2007}, \cite{Akakpo2011} considered the space $\L^2 (\XX^2, M)$ of square integrable functions on $\XX^2$ equipped with the random product measure $M =  \lambda_n \otimes \mu$ where  $\lambda_n =  n^{-1} \sum_{i=0}^{n-1} \delta_{X_i}$ and used the distance defined for $f,f' \in \L^2 (\XX^2, M)$  by 
\begin{eqnarray*}
\delta^2(f,f')= \frac{1}{n} \sum_{i=0}^{n-1} \int_{\XX} \left( f (X_i,y) - f'(X_i,y)  \right)^2 \d \mu (y).
\end{eqnarray*}
\cite{Birge2012} considered the cone $\L^1_+ (\XX^2, \mu \otimes \mu)$ of non-negative integrable functions and used the deterministic Hellinger-type distance defined for $f,f' \in  \L^1_+ (\XX^2, \mu \otimes \mu)$ by
 \begin{eqnarray*}
\delta^2(f,f')= \frac{1}{2} \int_{\XX^2} \left( \sqrt{f (x,y)} - \sqrt{f'(x,y)}  \right)^2 \d \mu (x) \d \mu (y).
\end{eqnarray*}
These approaches, which often rely on the loss that is used, require the knowledge (or at least a suitable estimation) of various quantities depending on the unknown $s$, such as the supremum norm of $s$, or on a positive lower bound, either on the stationary density, or on $k^{-1} \sum_{j=1}^k s^{(l + j)}$ for some $k \geq 1$, $l \geq 0$ where $s^{(l+j)} (x,\cdot)$ is the density of the conditional law $\mathcal{L} (X_{l+j} \mid X_0 = x)$.
Unfortunately, these quantities not only  influence the way the estimators are built but also their performances since they are involved in the risk bounds. In the present paper, we shall rather consider the distance $H$ (corresponding to an analogue of the random $\L^2$ loss above) defined on the cone $\L_{+}^1 (\XX^2, M)$ of integrable and non-negative functions  by 
\begin{eqnarray*}
H^2(f,f') = \frac{1}{2 n} \sum_{i=0}^{n-1} \int_{\XX} \left( \sqrt{f (X_i,y)} - \sqrt{f'(X_i,y)}  \right)^2 \d \mu (y) \quad \text{for all  $f,f' \in \L_{+}^1 (\XX^2, M)$.}
\end{eqnarray*}
For such a loss, we shall show that our estimators satisfy an oracle-type inequality under very weak assumptions on the Markov chain. A connection with the usual deterministic Hellinger-type loss will be done under a posteriori assumptions on the chain, and hence, independently of the construction of the estimator.

Our estimation strategy can be viewed as a mix between an approach based on the minimization of a contrast and an approach based on robust tests.  Estimation procedures based on tests  started in the seventies with Lucien Lecam and Lucien Birgé (\cite{LeCam1973,LeCam1975,Birge1983, Birge1984, Birge1984a}). More recently, \cite{BirgeTEstimateurs} presented a powerful device to establish general oracle inequalities from robust tests. It was used in our statistical setting in \cite{Birge2012}  and in many others in  \cite{BirgePoisson, BirgeDens}  and \cite{Sart2012}. 
We make two contributions to this area. Firstly, we provide a new test for our statistical setting. This test is based on a variational formula inspired from~\cite{BaraudMesure} and differs from the one of~\cite{Birge2012}. Secondly, we  shall study procedures that are quite far from the original one of~\cite{BirgeTEstimateurs}. Let us explain why.

The procedure of~\cite{BirgeTEstimateurs} depends on a suitable net, the construction of which is usually abstract, making thus the estimator impossible to build in practice. In the favourable cases where the net can be made explicit, the procedure is anyway too complex to be implemented (see for instance Section~3.4.2 of~\cite{BirgePoisson}). This procedure was afterwards adapted to estimators selection in~\cite{BaraudBirgeHistogramme} (for histogram type estimators) and in~\cite{BaraudMesure} (for  more general estimators). The complexity of their algorithms is of order the square of the cardinality of the family and are thus implementable when this family is not too large. In particular, given a family of histogram type estimators $\{\hat{s}_m,  m \in \M\}$, these two procedures are interesting in practice when $\M$ is a collection of regular partitions (namely when all its elements have same Lebesgue measure) but become unfortunately numerically intractable for richer collections.  In this work, we tackle this issue  by proposing  a new way of selecting among a  family of piecewise constant estimators when the collection $\M$ ensues from the adaptive approximation algorithm of~\cite{DeVore1990}.  

We present this procedure in the first part of the paper. It yields to a piecewise constant estimator on a data-driven   partition that satisfies an oracle-type inequality from which we shall deduce uniform rates of convergence over balls of (possibly) inhomogeneous Besov spaces with small regularity indices. These rates coincide, up to a possible logarithmic factor to the usual ones over such classes. Finally, we carry out numerical simulations to compare our estimator with the one of~\cite{Akakpo2011}. 

In the second part of this paper, we are interested in obtaining stronger theoretical results for our statistical problem. We put aside the practical considerations to focus on the construction of an estimator that satisfies a general model selection theorem. Such an estimator should be  considered as a benchmark for what theoretically feasible. We deduce rates of convergence over a large range of anisotropic and inhomogeneous Besov spaces on $[0,1]^{2 d}$.  We shall also consider other kinds of assumptions on the transition density. We shall assume that~$s$ belongs to classes of functions satisfying structural assumptions  and for which faster rates of convergence can be achieved. This approach was developed by~\cite{Juditsky2009} (in the Gaussian white noise model) and  by~\cite{BaraudComposite} (in more statistical settings)  to avoid the curse of dimensionality. More precisely,~\cite{BaraudComposite} showed that these rates can be deduced from a general model selection theorem, which strengthen its theoretical interest. This strategy was used in~\cite{Sart2012} to establish risk bounds over many classes of functions for Poisson processes with covariates. We shall use these assumptions to obtain faster rates of convergence for autoregressive  Markov chains (whose conditional variance may not  be constant).

This paper is organized as follows. The first procedure, which  selects among piecewise constant estimators is presented and theoretically  studied  in Section~2. In Section~3, we  carry out a simulation study and compare our estimator with the one of~\cite{Akakpo2011}. The practical implementation of this procedure is quite technical and will  therefore be delayed  in the appendix, in Section~\ref{SectionAlgorithm}.  In Section~4, we establish theoretical results by using our second procedure. The proofs are postponed to Section~6.

Let us introduce some notations that will be used all along the paper.  
The number $x \vee y$ (respectively $x \wedge y$) stands for $\max(x,y)$ (respectively $\min(x,y)$) and $x_+$ stands for $x \vee 0$.  We set $\N^{\star} = \N \setminus \{0\}$. For $(E,d)$ a metric space, $x \in E$ and $A \subset E$, the distance between $x$ and~$A$ is denoted by $d(x,A)= \inf_{a \in A} d(x,a)$. The indicator function of a subset $A$ is denoted by~$\1_A$ and the restriction of a function $f$ to $A$ by  $\restriction{f}{A}$. For all real valued function $f$ on $E$, $\|f\|_{\infty}$ stands for $\sup_{x \in E} |f(x)|$. The cardinality of a finite set $A$ is denoted by $|A|$. 
The notations $C$,$C'$,$C''$\dots are for the constants. The constants $C$,$C'$,$C''$\dots   may change from line to line.

\section{Selecting among piecewise constant estimators.} \label{SectionExPartition}
Throughout this section,  we assume that $\XX = \R^{d}$, $A  = [0,1]^{2 d}$, $\mu ([0,1]^{d}) = 1$ and $n > 3$. 

\subsection{Preliminary estimators.} Given a (finite) partition $m$ of $[0,1]^{2 d}$, a simple way to estimate~$s$ on $[0,1]^{2 d}$ is to consider the piecewise constant estimator on the elements of $m$ defined by
\begin{eqnarray} \label{EstimateurConstantParMorceaux}
\hat{s}_{m} = \sum_{K \in m} \frac{\sum_{i=0}^{n-1} \1_K (X_i,X_{i+1})}{\sum_{i=0}^{n-1} \int_{[0,1]^d} \1_K (X_i,x) \d \mu (x)} \1_K.
\end{eqnarray}
In the above definition, the denominator $\sum_{i=0}^{n-1} \int_{\XX} \1_K (X_i,x) \d \mu (x)$ may be equal to~$0$ for some sets~$K$, in which case the numerator $\sum_{i=0}^{n-1} \1_K (X_i,X_{i+1}) = 0$ as well, and we shall use the convention $0/0 = 0$. 

We now bound from above the risk of this estimator. We set $$V_m = \left\{ \sum_{K \in m} a_K \1_K ,\, \forall K \in m, \, a_K \in [0,+\infty) \right\}$$ 
and  prove the following.
\begin{prop}\label{prophisto}
For all finite partition $m$ of $[0,1]^{2 d}$,
\begin{eqnarray*}\label{eqhisto}
C \E \left[H^2(s \1_{A}, \hat{s}_{m})\right] \leq   \E \left[H^2 (s \1_{A}, V_{m})\right] + \frac{1 + \log n}{n} |m|
\end{eqnarray*}
where $C = 1/(4 + \log 2)$.
\end{prop}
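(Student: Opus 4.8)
The plan is to compare $\hat{s}_m$ with a suitable projection-type element of $V_m$ and control the fluctuations cell by cell. Fix a finite partition $m$. For each $K \in m$ write $K = K_1 \times K_2$ (or more generally project $K$ onto the first $d$ coordinates) and introduce the ``ideal'' coefficient
\begin{eqnarray*}
\bar{s}_K = \frac{\sum_{i=0}^{n-1} \int_{K} s(X_i,y) \d\mu(y)}{\sum_{i=0}^{n-1} \int_{[0,1]^d} \1_K(X_i,x) \d\mu(x)},
\end{eqnarray*}
with the convention $0/0=0$, and set $\bar{s}_m = \sum_{K\in m} \bar{s}_K \1_K \in V_m$. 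The key observation is that, conditionally on $(X_0,\dots,X_{n-1})$, the denominator $N_K := \sum_{i=0}^{n-1}\int_{[0,1]^d}\1_K(X_i,x)\d\mu(x)$ is fixed, and the numerator $\sum_{i=0}^{n-1}\1_K(X_i,X_{i+1})$ has conditional mean $\sum_{i=0}^{n-1}\int_K s(X_i,y)\d\mu(y) = N_K\,\bar{s}_K$ by the Markov property. So $\hat{s}_K$ is an (essentially) unbiased estimator of $\bar{s}_K$ on each cell.

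First I would use a triangle-type inequality for $H$ together with the elementary bound $(\sqrt a-\sqrt c)^2\le 2(\sqrt a-\sqrt b)^2+2(\sqrt b-\sqrt c)^2$ to get
\begin{eqnarray*}
C\,H^2(s\1_A,\hat{s}_m) \le H^2(s\1_A,\bar{s}_m) + H^2(\bar{s}_m,\hat{s}_m)
\end{eqnarray*}
up to the universal constant, and then take expectations. The first term is bounded by $\E[H^2(s\1_A,V_m)]$ since $\bar{s}_m\in V_m$ (indeed $\bar{s}_m$ is, on each cell, the $H$-type projection — or close to it). For the second term, note that on each cell $\hat{s}_m$ and $\bar{s}_m$ are constants $\hat{s}_K,\bar{s}_K$, so $H^2(\bar{s}_m,\hat{s}_m) = \frac{1}{2n}\sum_{K\in m}(\sqrt{\hat{s}_K}-\sqrt{\bar{s}_K})^2 N_K$. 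Using $(\sqrt a-\sqrt b)^2 \le (a-b)^2/( \text{something})$ is the wrong direction; instead I would use $(\sqrt a - \sqrt b)^2 \le |a-b|$ when, say, $a\vee b\ge$ a threshold, and handle small values separately, or more cleanly bound $N_K(\sqrt{\hat s_K}-\sqrt{\bar s_K})^2$ by a quantity of the form $|\hat N_K^{\mathrm{num}} - N_K\bar s_K|$ divided by appropriate normalizations. The conditional variance of the numerator is a sum of indicator (Bernoulli-type) terms and is at most its mean $N_K\bar s_K$, so a Cauchy–Schwarz / Efron–Stein argument gives $\E[N_K(\sqrt{\hat s_K}-\sqrt{\bar s_K})^2 \mid X_0^{n-1}] \lesssim 1$ per nonempty cell, hence $\E[H^2(\bar s_m,\hat s_m)]\lesssim |m|/n$. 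The $\log n$ factor and the precise constant $C=1/(4+\log 2)$ will come from a more careful truncation: one splits according to whether $\bar s_K$ (or $N_K/n$) is below or above a level of order $1/n$, the ``large'' cells being treated by the variance bound and the ``small'' cells producing, after summing the contributions $\le (\log n)/n$ each, the $\frac{1+\log n}{n}|m|$ term.

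The main obstacle is the sharp bookkeeping that produces exactly $\frac{1+\log n}{n}|m|$ with the stated constant $1/(4+\log 2)$: the naive variance computation gives $|m|/n$ without the logarithm, so the logarithm must be an artefact of the Hellinger (square-root) transform applied to a possibly very small ratio $\bar s_K$ estimated from few observations — one needs a deviation inequality for $(\sqrt{\hat s_K}-\sqrt{\bar s_K})^2$ uniform over the whole range of $\bar s_K$, and the worst case $\bar s_K \sim 1/(n N_K/n\cdot n)$ is what forces the $\log n$. I would isolate this in a one-cell lemma: conditionally on $X_0,\dots,X_{n-1}$, for a sum $Z$ of $\{0,1\}$-valued variables (not independent, but with the right conditional mean $p N$ via the Markov property) one has $\E[(\sqrt{Z/N}-\sqrt{p})^2 N] \le \kappa(1+\log N)$ for a concrete $\kappa$; combining over $K$ and optimizing constants yields the proposition. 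The only subtlety beyond this is to make sure the dependence structure of the $X_i$'s does not spoil the conditional-mean identity, which it does not, because conditioning on $(X_0,\dots,X_{n-1})$ and using $\mathcal{L}(X_{i+1}\mid X_i)$ handles each term of the numerator separately.
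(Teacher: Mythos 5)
Your overall decomposition (introduce $\bar{s}_m$, apply a weighted triangle inequality, bound the two pieces separately) is the one used in the paper, and your $\bar{s}_m$ is exactly the paper's intermediate object. However there are two real problems with the way you proceed from there.

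First, the ``key observation'' is false as stated. Conditionally on $(X_0,\dots,X_{n-1})$, the numerator $\sum_{i=0}^{n-1}\1_K(X_i,X_{i+1})$ is \emph{not} an unbiased estimator of $\sum_{i=0}^{n-1}\E\left[\1_K(X_i,X_{i+1})\mid X_i\right]$: for $i\le n-2$ the term $\1_K(X_i,X_{i+1})$ is deterministic given that conditioning, so the conditional expectation of the numerator is $\sum_{i=0}^{n-2}\1_K(X_i,X_{i+1})+\E[\1_K(X_{n-1},X_n)\mid X_{n-1}]$, which is not $N_K\bar{s}_K$. The paper does \emph{not} condition on the whole path; the correct structure is a martingale one: with $\F_i=\sigma(X_0,\dots,X_i)$, the differences $\1_K(X_i,X_{i+1})-\E[\1_K(X_i,X_{i+1})\mid X_i]$ are martingale increments, and this is what the proof actually exploits. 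Your closing remark that ``conditioning on $(X_0,\dots,X_{n-1})$ and using $\mathcal L(X_{i+1}\mid X_i)$ handles each term of the numerator separately'' is precisely the move one is not allowed to make, since those two conditionings are incompatible.

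Second, the crucial bound $\E[B_K]\le C'(1+\log n)$, where $B_K=\bigl(\sqrt{\sum_i\1_K(X_i,X_{i+1})}-\sqrt{\sum_i\E[\1_K(X_i,X_{i+1})\mid X_i]}\bigr)^2$, is where essentially all the work lies, and you only gesture at it. You correctly diagnose that a naive second-moment argument gives $\mathcal O(1)$ per nonempty cell and no logarithm, and that the logarithm is an artefact of the square-root transform near zero, but the ``one-cell lemma'' you invoke is the proposition in disguise and is not proven. In the paper this step requires three ingredients: (i) a stopping time $T=\inf\{i:\E[\1_K(X_i,X_{i+1})\mid X_i]\ge 1/(2n)\}\wedge(n-1)$ separating the small-mean prefix (handled by a crude $\ell^1$ bound, contributing $\mathcal O(1)$) from the rest; (ii) a martingale variance recursion showing that $\E\bigl[\frac{(\sum_{i\ge T}\Delta_i)^2}{\sum_{i\ge T}\E[\1_K\mid X_i]}\bigr]\le\sum_k\E\bigl[\frac{\E[\1_K\mid X_k]}{\sum_{T\le i\le k}\E[\1_K\mid X_i]}\bigr]$; and (iii) a deterministic harmonic-sum inequality $\sum_{k\ge j}u_k/\sum_{j\le i\le k}u_i\le 1+\log n-\log u_j$, which is where $\log n$ enters. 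None of this is ``bookkeeping'' or ``optimizing constants''; without it the proposition is not proved. So the proposal identifies the right intermediate object but the central estimate is missing, and the heuristic that is meant to replace it rests on an incorrect conditional-independence picture.
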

Up to a constant, the risk of $\hat{s}_{m}$ is bounded by a sum of two terms. The first one corresponds to the approximation term whereas the second one corresponds to the estimation term.

An analogue upper bound on the empirical quadratic risk of this estimator may be found in Chapter 4 of~\cite{Akakpo2009}. Her bound requires several assumptions on the partition $m$ and the Markov chain although the present one requires none. However, unlike hers, we lose a logarithmic term.

\subsection{Definition of the partitions.} \label{SectionChoixM}
In this section, we shall deal with special choice of partitions $m$. More precisely,  we consider the family of partitions defined  by using the recursive algorithm developed in~\cite{DeVore1990}. 
For $j \in \N$, we consider the set 
$$\mathcal{L}_j = \left\{ \mathbf{l} = (l_1,\dots,l_{2 d}) \in \N^{2 d} , \, 1 \leq l_i \leq 2^j \; \text{for} \; 1 \leq i \leq 2 d\right\}$$
and define for all $\mathbf{l} = (l_1,\dots,l_{2 d})  \in \mathcal{L}_j$,
\begin{equation*}
 \forall i \in \{1,\dots,2 d\}, \quad I_j (l_i)  =
  \begin{cases}
  \left[\frac{l_i-1}{2^{j}}, \frac{l_i}{2^{j}} \right) & \text{if $l_i < 2^{j}$} \\
    \left[\frac{l_i-1}{2^{j}}, 1 \right]  & \text{if $l_i = 2^{j}$.} 
  \end{cases}
\end{equation*}
We then introduce the cube  $K_{j,\mathbf{l}} = \prod_{i=1}^{2 d}  I_j (l_i)$ and  set  $\mathcal{K}_j = \left\{ K_{j, \mathbf{l}}, \, \mathbf{l} \in \mathcal{L}_j \right\}$.

The algorithm starts with $[0,1]^{2 d}$. At each step, it gets a partition of $[0,1]^{2 d}$ into a finite family of disjoint cubes of the form $K_{j,\mathbf{l}}$. For any such cube, one decides  to divide it into the~$4^{d}$ elements of $\mathcal{K}_{j+1}$ which are contained in it, or not. 
The set of all such partitions that can be constructed in less than $\ell$ steps is denoted by $\M_{\ell}$. 
We set $\M_{\infty} = \cup_{\ell \geq 1} \M_{\ell}$.
Two examples of partitions are illustrated in  Figure~\ref{FigurePartitions} (for $d = 1$).

\begin{center}
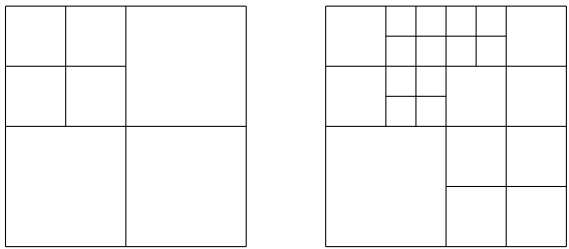
\begin{figure} [H] 
\setlength{\unitlength}{0.4cm}
\begin{picture}(10,8) 
\put(0,8){\line(1,0){8}}
\put(8,0){\line(0,1){8}}
\put(0,0){\line(0,1){8}}
\put(0,0){\line(1,0){8}}
\put(0,4){\line(1,0){8}}
\put(4,0){\line(0,1){8}}
\put(0,6){\line(1,0){4}}
\put(2,4){\line(0,1){4}}
\end{picture}
\setlength{\unitlength}{0.4cm}
\begin{picture}(10,8)
\put(0,8){\line(1,0){8}}
\put(8,0){\line(0,1){8}}
\put(0,0){\line(0,1){8}}
\put(0,0){\line(1,0){8}}
\put(0,4){\line(1,0){8}}
\put(4,0){\line(0,1){8}}
\put(0,6){\line(1,0){4}}
\put(2,4){\line(0,1){4}}
\put(2,5){\line(1,0){2}}
\put(3,4){\line(0,1){2}}
\put(2,7){\line(1,0){2}}
\put(3,6){\line(0,1){2}}
\put(6,4){\line(0,1){4}}
\put(4,6){\line(1,0){4}}
\put(4,7){\line(1,0){2}}
\put(5,6){\line(0,1){2}}
\put(4,2){\line(1,0){4}}
\put(6,0){\line(0,1){4}}
\end{picture}
\caption{Left: example of a partition of  $\M_2$. Right: example of a partition of  $\M_3$.} \label{FigurePartitions}
\end{figure}
\end{center}

\subsection{The  selection rule.} \label{SectionProcedure}
Given   $\ell \in \N^{\star} \cup \{\infty\}$, the aim of this section is to select an estimator among the family $\{\hat{s}_m, \, m \in \M_{\ell}\}$. 

For any  $K \in \cup_{m \in \M_{\ell}} m$ and any partition $m' \in \M_{\ell}$, let $m' \vee K $ be the partition of $K$ defined by 
$$m' \vee K = \{K' \cap K, \, K' \in m',\, K \cap K' \neq \emptyset \}.$$
Let $L$ be a positive number and $\pen$ be the non-negative map defined by
$$\pen \left(m' \vee K \right) = L \frac{|m' \vee K| \log n}{n} \quad \text{for all $m' \in \M_{\ell}$ and  $K \in \cup_{m \in \M_{\ell}} m$}.$$
Let us set $\alpha = (1-1/\sqrt{2})/2$ and for all $f,f' \in \L_+^1 (\XX^2, M)$,
\begin{eqnarray} \label{TestBaraud}
 T(f , f')\!\! &=& \!\!\! \frac{1}{2 n \sqrt{2} } \sum_{i=0}^{n-1}   \int_{\XX} \sqrt{f(X_i,y) + f'(X_i,y)} \left(\sqrt{f' (X_i,y)} - \sqrt{f(X_i,y)} \right) \d \mu (y)        \\
& & \!\!\!  +  \frac{1}{n \sqrt{2}}  \sum_{i=0}^{n-1}   \frac{\sqrt{f'(X_i,X_{i+1})} - \sqrt{f(X_i,X_{i+1})}}{\sqrt{f (X_i,X_{i+1}) + f'(X_i,X_{i+1})}} \nonumber \\
& & \!\!\! + \frac{1}{2 n} \sum_{i=0}^{n-1}   \int_{\XX} \left( f(X_i,y) - f'(X_i,y) \right) \d \mu (y)  \nonumber.
\end{eqnarray}
We define $\gamma$  for $m \in \M_{\ell}$ by
\begin{eqnarray*}
\gamma(m) \!\!\!   &=& \!\!\!  \left\{ \sum_{K \in m} \sup_{m' \in \M_{\ell}} \left[\left(  \sum_{K' \in m'} \left( \alpha H^2 \left(\hat{s}_m \1_{K \cap K'}, \hat{s}_{m'}\1_{K \cap K'}\right) +  T \left(\hat{s}_m\1_{K \cap K'},\hat{s}_{m'}\1_{K \cap K'} \right)\right) \right)  \right. \right. \\
& & \left. \left. \qquad -   \pen (m' \vee K)   \right] \right\}  + 2 \pen(m).
\end{eqnarray*}
Finally, we select $\hat{m}$ among $\M_{\ell}$ as any partition satisfying
\begin{eqnarray} \label{eqAminimiser}
\gamma(\hat{m}) \leq \inf_{m \in \M_{\ell}} \gamma(m) + \frac{1}{n}
\end{eqnarray}
and consider the resulting estimator $\hat{s}  = \hat{s}_{\hat{m}}$.

\paragraph{Remarks.}
The estimator $\hat{s} = \hat{s} (L,\ell)$ depends on the choices of two quantities $L > 0$, $\ell \in \N^{\star} \cup \{\infty\}$. We shall see in the next section that $L$ can be chosen as an universal numerical constant.
As to~$\ell$, from a theoretical point of view, it can be chosen as $\ell = \infty$. In practice, we recommend to take it as large as possible. Nevertheless, the larger $\ell$, the longer it takes to compute the estimator.
A practical algorithm in view of computing $\hat{m}$ will be detailed in the appendix.

The selection procedure we use may look somewhat unusual. It can be seen as a mix between a procedure based on a contrast function (which is usually easy to implement) and a procedure based on a robust test (the functional $T(f,f')$, which can be seen as a robust test between $f,f'$, will allow us to obtain risk bounds with respect to a Hellinger-type distance).
This functional is  inspired from the variational formula for the Hellinger affinity described in Section~2 of~\cite{BaraudMesure}.

\subsection{An oracle inequality.} \label{SectionOracleConstant}
The main result of this section is the following.
\begin{thm}  \label{CorTheoremSelectionHisto} 
There exists an universal constant $L_0 > 0$ such that, for all $L \geq L_0$,  $\ell \in \N^{\star} \cup \{\infty\}$,
 the estimator  $\hat{s} = \hat{s} (L, \ell)$   satisfies 
\begin{eqnarray} \label{EqOracleSelectHisto}
C \E \left[H^2 \left(s \1_{A}, \hat{s} \right) \right] \leq  \inf_{m \in \M_{\ell}} \left\{  \E \left[H^2 \left(s \1_{A}, V_m \right)\right] + \pen(m)  \right\} 
\end{eqnarray}
where $C$ is an universal positive constant. 
\end{thm}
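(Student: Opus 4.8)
The overall strategy is a standard model-selection argument adapted to the present contrast $\gamma$. Fix $L\ge L_0$ (to be determined) and $\ell\in\N^\star\cup\{\infty\}$. Write $\s=\hat{s}_{\hat m}$ and let $m\in\M_\ell$ be arbitrary. The first step is to record the ``competition inequality'' coming from the definition~\eqref{eqAminimiser} of $\hat m$: up to the additive $1/n$ we have $\gamma(\hat m)\le\gamma(m)+1/n$. The goal is to deduce from this a bound on $H^2(s\1_A,\s)$ in terms of $H^2(s\1_A,V_m)$ and $\pen(m)$, the key point being that the selection rule has been engineered so that each competing partition $m'$ can only improve on $\hat m$ by ``paying'' its own penalty $\pen(m'\vee K)$ on each piece.

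The second step is the core probabilistic estimate. Because $T(f,f')$ is (up to normalisation) the variational expression for the Hellinger affinity of~\cite{BaraudMesure}, one has a deterministic inequality relating $T(f,f')$, its ``expected'' counterpart, and $H^2$; the idea is that for the true $s$, the empirical quantity $T(s\1_A,f)$ concentrates around a quantity controlled by $-H^2(s\1_A,f)$ plus a fluctuation term. Concretely I would introduce, for each pair $(K,K')$ and each $m'$, the centered process obtained by subtracting from $\alpha H^2(\hat s_m\1_{K\cap K'},\hat s_{m'}\1_{K\cap K'})+T(\hat s_m\1_{K\cap K'},\hat s_{m'}\1_{K\cap K'})$ a suitable expectation, and control its supremum over $m'\in\M_\ell$ (equivalently, over all admissible refinements of the cube $K$) by a chaining / union-bound argument. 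The combinatorial input here is that the number of partitions in $\M_\ell$ refining a given cube at ``cost'' $D$ dimensions is of order $\exp(cD)$ (this is exactly why $\pen$ carries a $\log n$ factor and is linear in $|m'\vee K|$): the penalty $L\,|m'\vee K|\log n/n$ must dominate both the variance of the relevant martingale increments and the entropy term $|m'\vee K|$, which forces $L\ge L_0$ for a universal $L_0$. A Bernstein-type inequality for bounded martingale differences (the chain is Markov and the increments involve $\1_K(X_i,X_{i+1})$ and $\int\1_K(X_i,\cdot)\d\mu$, which are uniformly bounded after the relevant normalisation), summed with the exponential weights $e^{-L'|m'\vee K|\log n}$, yields a bound whose expectation is $O(1/n)$ per cube and $O(|m|/n)$ in total — absorbed into $\pen(m)$.

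The third step is to assemble the pieces. Plugging the concentration bound into the competition inequality, the supremum defining $\gamma(m)$ for the minimiser $\hat m$ is controlled, and — using that $T$ compares $\s$ to $\hat s_m$ through $H$ and the triangle-type inequality $H^2(s\1_A,\s)\le C(H^2(s\1_A,V_m)+H^2(\hat s_m\1_{\cdot},\s\1_{\cdot}))$ together with Proposition~\ref{prophisto} to replace $\E[H^2(s\1_A,\hat s_m)]$ by $\E[H^2(s\1_A,V_m)]+ (1+\log n)|m|/n$ — one arrives at
\begin{eqnarray*}
C\,\E\!\left[H^2(s\1_A,\s)\right]\le \E\!\left[H^2(s\1_A,V_m)\right]+\pen(m)
\end{eqnarray*}
for every $m\in\M_\ell$; taking the infimum gives~\eqref{EqOracleSelectHisto}. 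I expect the main obstacle to be the second step: making the supremum over $m'\in\M_\ell$ (a very rich, data-dependent family, especially for $\ell=\infty$) genuinely controllable. The decomposition of $\gamma(m)$ into a sum over $K\in m$ of a local supremum over refinements of $K$ is precisely the device that tames this — it lets one apply the exponential bound cube-by-cube with the local dimension $|m'\vee K|$ as the complexity parameter — but verifying that the local penalties telescope correctly into the global $\pen(m)$, and that the stochastic remainder is summable, is where the real work lies. A secondary technical point is handling the $0/0$ convention and the event where denominators vanish, which should only contribute negligible terms since there the estimators are $0$ on the corresponding cubes.
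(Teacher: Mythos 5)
Your plan gets the architecture broadly right (competition inequality, concentration of $T$ around $-H^2$ via a Bernstein/martingale argument, assembly with the triangle inequality and Proposition~\ref{prophisto}, separate treatment of the complexity count), but there is a genuine gap at exactly the point you yourself flag as ``where the real work lies.''

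The supremum inside $\gamma(m)$ runs over $m'\in\M_\ell$, and the object being maximised involves the \emph{random} estimators $\hat s_{m'}$. A union bound (or chaining) indexed by $m'$ does not decouple this randomness: for each fixed $m'$, the quantity $T(\hat s_m\1_{K\cap K'},\hat s_{m'}\1_{K\cap K'})$ is not a fixed function of the data to which one can apply a Bernstein inequality with a deterministic bound and variance; it depends on the data through both arguments. Your proposed combinatorial estimate (``number of partitions refining $K$ at cost $D$ is $\exp(cD)$'') is the right count of \emph{partitions} but it does not control the supremum over the random family $\{\hat s_{m'}\}$. The device the paper introduces to fix this — and which your proposal omits — is a \emph{deterministic} net: on every cube $K'$ the random function $\hat s_{m'}\1_{K'}$ takes values in a fixed finite set $S_{K'}$ of piecewise-constant functions with rational heights, of cardinality at most $\ell n(n+1)$ (Claim~\ref{ClaimModels}); consequently $\hat s_{m'}$ always lies in the deterministic set $S_{m'}=\{\sum_{K'\in m'}f_{K'}:f_{K'}\in S_{K'}\}$ with $\log|S_{m'}|\lesssim |m'|\log n$. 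Lemma~\ref{LemmeControlGamma} sandwiches $\gamma(m)$ between a quantity $\gamma_1(m)$ involving only the actual estimators and a quantity $\gamma_2(m)$ whose supremum runs over the \emph{deterministic} index set $\{(m',f'):m'\in\M_\ell,\ f'\in S_{m'}\}$. Only then does a plain (finite) union bound apply, via the martingale Bernstein inequality of Claim~\ref{ClaimConcentrationInequality} together with the variational identities~(\ref{RelationTEtZ})--(\ref{RelationTEtZ2}), and the $\log n$ factor in $\pen$ is precisely what is needed to dominate $|m'|^{-1}\log|S_{m'}|\asymp\log n$. Your plan as written would not go through without this discretisation step.

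Two smaller points: (i) the paper does not use chaining — the net is finite, so a bare union bound suffices, which is a genuine simplification over what you propose; (ii) the case $\ell=\infty$ (and more generally $\ell>n$) is not covered by the $\ell\le n$ concentration argument and needs a separate deterministic reduction, namely Lemma~\ref{LemmeProcedureStationaire} showing that $\hat s(L,\ell)=\hat s(L,n)$ for $\ell\ge n+1$ and $\ell=\infty$; your proposal does not address this.
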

In the literature, oracle inequalities with a random quadratic loss for piecewise constant estimators have been obtained in~\cite{Lacour2007} and~\cite{Akakpo2011}. Their procedures require a priori assumptions on the transition density and  the Markov chain although ours requires none (except homogeneity). However, unlike theirs, our risk bound involves an extra  logarithmic term. We do not know whether this term is necessary or not.

In the proof, we obtain an upper bound for $L_0$ which is unfortunately very rough and useless in practice. It seems difficult to obtain a sharp bound on $L_0$ from the theory and we have rather  carried out a simulation study in order to tune $L_0$ (see Section~\ref{SectionSimus}).

\subsection{Risk bounds with respect to a deterministic loss.} \label{SectionHellingerDeterministe} Although the distance $H$ is natural, we are interested in  controlling  the risk  associated to a deterministic distance. 
To do so, we shall make a posteriori assumptions on the Markov chain. 

\begin{hyp} \label{Hypothese1}
The sequence $(X_i)_{i \geq 0}$ is stationary and admits a stationary density~$\varphi$ with respect to the Lebesgue measure $\mu$ on $\R^d$.
There exists $\kappa_0 > 0$ such that $\varphi(x) \geq \kappa_0$ for all $x \in [0,1]^d$.
\end{hyp}
We introduce $\LL_+^1 \left([0,1]^{2 d}, (\varphi \cdot \mu) \otimes \mu\right)$ the cone of integrable and non-negative functions on $[0,1]^{2 d}$ with respect to the product measure $(\varphi \cdot \mu) \otimes \mu$. We endow $\LL_+^1 ([0,1]^{2 d}, (\varphi \cdot \mu) \otimes \mu)$  with the distance~$h$ defined by
$$\forall f,f' \in \LL_+^1 \big([0,1]^{2 d}, (\varphi \cdot \mu) \otimes \mu\big), \quad  h^2 (f,f') =  \frac{1}{2} \int_{[0,1]^{2 d }} \left(\sqrt{f (x,y)} - \sqrt{f'(x,y)} \right)^2 \varphi (x) \d x \d y.$$
In our results, we shall need the $\beta$-mixing properties of the Markov chain. We  set for all $q \in \N^{\star}$
$$\beta_q = \int_{\R^{2 d}}  |s^{(q)} (x,y) - \varphi (y)   |   \varphi (x) \d x \d y$$ 
where $s^{(q)}(x,\cdot)$ is the  density of the conditional law $\mathcal{L} (X_q \mid X_0 = x)$ with respect to the Lebesgue measure.  
We refer to~\cite{Doukhan1994} and~\cite{Bradley2005} for more details on the $\beta$-mixing coefficients.

\begin{thm}  \label{CorTheoremSelectionHistoDistanceDeterministe}
Under Assumption~\ref{Hypothese1}, the estimator $\hat{s}$ built in Section~\ref{SectionProcedure} with $\ell \in \N^{\star}$ and $L \geq L_0$, satisfies 
\begin{eqnarray*} \label{InequaliteOracleDet}
C \E \left[h^2 \left(s \1_{A}, \hat{s} \right) \right] \leq  \inf_{m \in \M_{\ell}} \left\{  h^2 \left(s \1_{A}, V_m \right) + \pen(m) \right\} + \frac{R_n (\ell)}{n}
\end{eqnarray*}
where 
\begin{eqnarray} \label{DefRnBetaMixing}
R_n (\ell) = n 2^{3 \ell d}  \inf_{1 \leq q \leq n} \left\{ \exp \left(- \frac{\kappa_0}{10} \frac{n}{q 2^{\ell d}} \right) + n  \beta_q / q \right\}
\end{eqnarray}
and where $C$ is an universal positive constant.
\end{thm}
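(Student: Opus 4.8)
The plan is to deduce Theorem~\ref{CorTheoremSelectionHistoDistanceDeterministe} from the already-established oracle inequality of Theorem~\ref{CorTheoremSelectionHisto} by comparing the random loss $H$ with the deterministic loss $h$. The key observation is that both losses are built from the same integrand $\frac12 \int_\XX (\sqrt{f(\cdot,y)} - \sqrt{f'(\cdot,y)})^2 \d\mu(y)$, evaluated either against the empirical measure $\lambda_n = n^{-1}\sum_{i=0}^{n-1}\delta_{X_i}$ (giving $H^2$) or against $\varphi\cdot\mu$ (giving $h^2$). So if $g_{f,f'}(x) := \frac12 \int_\XX (\sqrt{f(x,y)}-\sqrt{f'(x,y)})^2\d\mu(y)$, then $H^2(f,f') = \int g_{f,f'}\,\d\lambda_n$ and $h^2(f,f') = \int g_{f,f'}\,\varphi\,\d\mu$. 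The first step is therefore to take expectations in \eqref{EqOracleSelectHisto} and replace, on both sides, $\E[H^2]$ by $h^2$ up to controllable error terms; for this I need $\E[g_{f,f'}(X_i)]$ to be close to $\int g_{f,f'}\varphi\d\mu$ uniformly, which is where stationarity (Assumption~\ref{Hypothese1}) enters trivially for the marginal law of $X_i$, and where the role of $\ell<\infty$ becomes crucial: since $m\in\M_\ell$, every cube $K$ has side at least $2^{-\ell}$, so there are at most $2^{2\ell d}$ cells and the functions involved are piecewise constant on a grid of bounded refinement, which makes all the relevant quantities bounded in a way that depends only on $\ell$, $d$, and $\kappa_0$.

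The heart of the matter is a concentration-type argument: I must show that, with overwhelming probability, $H^2(s\1_A,\hat s)$ and $h^2(s\1_A,\hat s)$ (and similarly the approximation terms $H^2(s\1_A,V_m)$ and $h^2(s\1_A,V_m)$) differ only by a multiplicative constant plus a term of order $R_n(\ell)/n$. Concretely, I would fix a cube $K\in\mathcal K_\ell$ and control the deviation $|\lambda_n(K) - \mu(K)|$ — more precisely, I need a lower bound $\lambda_n(K_1) \gtrsim \kappa_0\mu(K_1)$ for the projections $K_1$ of cubes onto the first $d$ coordinates, since these denominators are what make $\hat s_m$ well-behaved and what connect the $\lambda_n$-weighted norm to the $\varphi\cdot\mu$-weighted one. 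The $\beta$-mixing coefficients $\beta_q$ are the tool for this: by the standard coupling/blocking technique one splits the $n$ observations into $\sim n/q$ blocks of length $q$, couples them with independent blocks at a cost $\le (n/q)\beta_q$ in total variation, and then applies a Bernoulli/Bernstein bound to the independent version. Summing $\P(\lambda_n(K_1) < \tfrac{\kappa_0}{2}\mu(K_1))$ over the at most $2^{2\ell d}$ cells, optimizing over $q$, and bounding the loss crudely by $2^{3\ell d}$ (or a similar power of $2^{\ell d}$) on the bad event produces exactly the quantity $R_n(\ell)/n = 2^{3\ell d}\inf_q\{\exp(-\tfrac{\kappa_0}{10}\tfrac{n}{q2^{\ell d}}) + n\beta_q/q\}$ in \eqref{DefRnBetaMixing}; the exponential factor is the Bernstein bound for the deviation $\kappa_0\mu(K_1)\ge\kappa_0 2^{-\ell d}$ across $\sim n/q$ quasi-independent blocks, and the $n\beta_q/q$ factor is the coupling cost.

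On the good event $\Omega_n$ where all these denominators are comparable to their means, I would show a two-sided comparison $c\,h^2(f,f') \le H^2(f,f') \le C\,h^2(f,f')$ for the finitely many piecewise-constant functions that can arise (this is essentially because on $\Omega_n$ the weights $\lambda_n(K_1)$ are within a constant factor of $\varphi$-averages $\int_{K_1}\varphi\d\mu \in[\kappa_0\mu(K_1),\|\varphi\|_\infty\mu(K_1)]$ — though I should be careful here, since I only assume a lower bound on $\varphi$, so the comparison may need to go through $\mu$ rather than through $\varphi$ directly, using that $s\1_A$ and $\hat s_m$ are the specific functions at hand; the structural identity that $\hat s_m$'s denominators are exactly the $\lambda_n$-weights helps a lot). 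Feeding this comparison into Theorem~\ref{CorTheoremSelectionHisto}, and on the complement $\Omega_n^c$ bounding $h^2(s\1_A,\hat s)$ crudely (it is at most a constant times $2^{\ell d}\cdot\|\varphi\|$-free quantities, say of order $2^{3\ell d}$ after absorbing the integrability of $s$) times $\P(\Omega_n^c)\le R_n(\ell)/(n\cdot 2^{3\ell d})$, yields the claimed bound.

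\textbf{Main obstacle.} The delicate point is the asymmetry of Assumption~\ref{Hypothese1}: only a \emph{lower} bound $\varphi\ge\kappa_0$ on $[0,1]^d$ is assumed, with no upper bound. The lower bound is what I need for the Bernstein exponent (it makes $\E\lambda_n(K_1)\ge\kappa_0\mu(K_1)$ bounded away from zero) and for bounding $h^2$ from \emph{below} by a multiple of $H^2$ on the good event; but going the other direction — bounding $h^2$ from \emph{above}, which is needed to transfer the approximation term $\E[H^2(s\1_A,V_m)]$ into $h^2(s\1_A,V_m)$ on the right-hand side — cannot use $\|\varphi\|_\infty$. I expect the resolution is that this direction does not actually require concentration: one can bound $h^2(s\1_A,V_m) \le$ something involving $H^2$ only \emph{in expectation} after integrating out, or more likely the inequality is stated so that the infimum term on the right is $h^2(s\1_A,V_m)$ rather than $\E[H^2(\cdot)]$ precisely because $h^2(s\1_A,V_m)$ can be related to $\E[H^2(s\1_A,V_m)]$ \emph{without} an upper bound on $\varphi$ — indeed $\E[H^2(s\1_A, g)] = \int g_{s\1_A,g}\varphi\d\mu = h^2(s\1_A,g)$ exactly by stationarity for any \emph{fixed} $g$, so the approximation term transfers with no error at all, and only the estimator-dependent term $H^2(s\1_A,\hat s)$ (whose argument $\hat s$ is random) needs the concentration machinery and the good-event/bad-event split. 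Making the good-event comparison tight enough that the constant $C$ stays universal (independent of $\ell$, $\kappa_0$, $n$), while the $\ell$- and $\kappa_0$-dependence is confined entirely to the additive $R_n(\ell)/n$ term, is the part that requires the most care.
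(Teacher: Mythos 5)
Your proposal captures the paper's strategy essentially exactly: reduce to Theorem~\ref{CorTheoremSelectionHisto}, compare $H$ and $h$ on an event on which the empirical weights $\lambda_n(I)$ dominate (up to a constant) the $\varphi$-weights $\int_I\varphi\,\d\mu$ for the finitely many first-coordinate cells $I$ of side $2^{-\ell}$, transfer the approximation term exactly by stationarity (no concentration needed there, as you correctly observe at the end), bound $h^2$ crudely by $\sim 4^{\ell d}$ on the bad event, and control the bad event's probability by a Bernstein-type deviation bound combined with Berbee-style coupling across $\sim n/q$ blocks to handle the $\beta$-mixing dependence; the accounting $4^{\ell d}\cdot 2^{\ell d}=2^{3\ell d}$ and the exponent $\kappa_0 n/(10\, q\, 2^{\ell d})$ fall out exactly as you anticipate. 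The one thing to correct is the orientation of the comparison: the good event gives $h^2\le 11\,H^2$ (equivalently $H^2\ge h^2/11$), which is the only direction used, and it follows from the lower deviation bound on $\lambda_n(I)$ alone; there is no need for a two-sided comparison, and the paper's event $\Omega_{eq}$ is defined precisely as this one-sided domination on $V_{m_{\mathrm{ref}}}\times V_{m_{\mathrm{ref}}}$, which your hedged remark about only having $\kappa_0$ (and not $\|\varphi\|_\infty$) correctly anticipates.
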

This result is interesting when the remainder term $R_n  (\ell)/ n$ is small enough, that is  when  $2^{\ell d}$ is small compared to $n$ and when the sequence $(\beta_q)_{q \geq 1}$ goes to  $0$ fast enough. More precisely, $R_n (\ell)$ can be bounded independently of $n$, $\ell$ whenever  $\ell$, $d$, $n$ and the $\beta_q$ coefficients satisfy the following.

\begin{itemize}
\item If the chain is geometrically $\beta$-mixing, that is if there exists $b_1 > 0$ such that $\beta_q \leq  e^{-b_1 q}$, then 
$$R_n (\ell) \leq  n^2 2^{3 \ell d+1} \left[\exp (- b_1 n) + \exp \left(- \frac{\kappa_0}{10} \frac{n}{2^{\ell d}}  \right) + \exp \left(- \sqrt{\frac{\kappa_0 b_1}{40} \frac{n}{2^{\ell d}}}  \right)  \right].$$
In particular, if $\ell, d, n$ are such that $2^{\ell d} \leq n / \log^{3} n$, $R_n (\ell)$ is upper bounded by a constant depending only on $\kappa_0,b_1$.
\item  If the chain is arithmetically $\beta$-mixing, that is if there exists $b_2 > 0$ such that $\beta_q \leq  q^{-b_2}$, then 
$$R_n (\ell) \leq  \frac{C' (b_2)}{\kappa_0^{b_2+1}}  \frac{2^{(4 + b_2) \ell d} \log^{b_2+1} \left(1 + \frac{\kappa_0 n }{2^{\ell d}} \right)}{ n^{b_2 - 1}}$$
where $C' (b_2)$ depends only on $b_2$.
Consequently, if  $2^{\ell d} \leq n^{1 - \zeta} / \log n$  and  $b_2 \geq 5/\zeta-4$ for  $\zeta \in (0,1)$,  $R_n (\ell)$ is upper bounded by a constant depending only on $\kappa_0,b_2$.
\end{itemize}

\subsection{Rates of convergence.} \label{SectionRatesConvergence}
The aim of this section is to obtain uniform risk bounds over classes of smooth transition densities for our estimator.

\subsubsection{Hölder spaces.}
Given $\sigma \in (0,1]$, we say that a function $f$ belongs to the Hölder space $\mathcal{H}^{\sigma} ([0,1]^{2 d})$ if there exists $|f|_{\sigma} \in \R_+$ such that for all $(x_1,\dots,x_{2 d}) \in [0,1]^{2 d}$ and all $1 \leq j \leq 2 d$, the functions $f_j (\cdot)  = f (x_1,\dots,x_{j-1},\cdot,x_{j+1},\dots,x_{2 d})$ satisfy
$$ \left| f_j (x) - f_j (y) \right| \leq |f|_{\sigma} |x - y|^{\sigma} \quad  \text{for all $x,y \in [0,1]$.}$$
When the restriction of $\sqrt{s}$ to $A = [0,1]^{2 d}$ is Hölderian, we deduce from~{(\ref{EqOracleSelectHisto})} the following.

\begin{cor}
For all $\sigma \in (0,1]$ and $\restriction{\sqrt{s}}{A} \in \mathcal{H}^{\sigma} ([0,1]^ {2 d})$, the estimator $\hat{s} = \hat{s} (L_0,\infty)$ satisfies
\begin{eqnarray*} \label{EqVitesseHisto1}
C \E \left[H^2 \left(s \1_A, \hat{s} \right) \right] \leq  \left(d \left|\restriction{\sqrt{s}}{A}\right|_{\sigma}\right)^{\frac{2 d}{d +  \sigma}} \left(\frac{\log n }{n} \right)^{\frac{ \sigma}{\sigma + d}}   + \frac{\log  n}{n} 
\end{eqnarray*}
where $C$ is an universal positive constant.
\end{cor}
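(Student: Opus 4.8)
The plan is to deduce the corollary from the oracle inequality~(\ref{EqOracleSelectHisto}) by a standard bias–variance tradeoff, choosing a suitable partition $m \in \M_{\infty}$ on which the approximation error of $\sqrt{s}$ is well controlled. First I would fix $\sigma \in (0,1]$ and set $g = \restriction{\sqrt{s}}{A} \in \mathcal{H}^{\sigma}([0,1]^{2d})$. For an integer $j \geq 0$, consider the regular dyadic partition $m_j = \mathcal{K}_j$ of $[0,1]^{2d}$ into $2^{2dj}$ cubes of side $2^{-j}$; this partition belongs to $\M_{j} \subset \M_{\infty}$. On each cube $K \in m_j$, approximate $g$ by its value at a fixed point (say the ``lower-left'' corner), so that the coordinatewise Hölder bound gives, for $(x,y) \in K$,
\begin{eqnarray*}
\left| g(x,y) - g(x_K,y_K) \right| \leq |g|_{\sigma} \sum_{i=1}^{2d} 2^{-j\sigma} = 2d\, |g|_{\sigma}\, 2^{-j\sigma},
\end{eqnarray*}
by telescoping along the $2d$ coordinates and using that each one-dimensional increment is at most $2^{-j}$ in absolute value. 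The corresponding piecewise constant function $\bar g = \sum_{K} g(x_K,y_K) \1_K$ belongs to $V_{m_j}$ (its coefficients are nonnegative since $g = \sqrt{s} \geq 0$), hence $\bar g^2 \in V_{m_j}$ as well after squaring — more precisely, writing $f_j = \bar g^2$, we have $f_j = \sum_K g(x_K,y_K)^2 \1_K \in V_{m_j}$ and $\sqrt{f_j} = \bar g$.

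Next I would bound the approximation term $H^2(s\1_A, V_{m_j}) \leq H^2(s\1_A, f_j)$. By definition of $H$,
\begin{eqnarray*}
H^2(s\1_A, f_j) = \frac{1}{2n} \sum_{i=0}^{n-1} \int_{[0,1]^d} \left( \sqrt{s(X_i,y)} - \bar g(X_i,y) \right)^2 \1_{[0,1]^d}(X_i)\, \d\mu(y) \leq \frac{1}{2} \left( 2d\, |g|_{\sigma}\, 2^{-j\sigma} \right)^2,
\end{eqnarray*}
using the pointwise bound above uniformly in $(X_i,y) \in A$ and $\mu([0,1]^d) = 1$. Since this bound is deterministic, taking expectation changes nothing: $\E[H^2(s\1_A, V_{m_j})] \leq 2 d^2 |g|_{\sigma}^2 2^{-2j\sigma}$. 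The penalty term is $\pen(m_j) = L_0\, |m_j| \log n / n = L_0\, 2^{2dj} \log n / n$. Plugging into~(\ref{EqOracleSelectHisto}) with $\ell = \infty$ and taking the infimum over $j$,
\begin{eqnarray*}
C\, \E\left[ H^2(s\1_A, \hat s) \right] \leq \inf_{j \geq 0} \left\{ 2 d^2 |g|_{\sigma}^2\, 2^{-2j\sigma} + L_0\, 2^{2dj}\, \frac{\log n}{n} \right\}.
\end{eqnarray*}

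Finally I would optimize over $j$. Balancing the two terms suggests $2^{2dj} \asymp \left( |g|_{\sigma}^2\, n / \log n \right)^{d/(d+\sigma)}$, i.e. choosing $j$ to be the largest integer with $2^{2dj} \leq \left( d^2 |g|_{\sigma}^2\, n/\log n \right)^{d/(d+\sigma)}$; this is a legitimate choice as long as that quantity is $\geq 1$, and when it is $< 1$ one simply takes $j=0$, which makes the penalty term $\leq L_0 \log n/n$ and absorbs into the stated $\log n/n$ remainder. With this choice both terms are of order $\left( d^2 |g|_{\sigma}^2 \right)^{\sigma/(d+\sigma)} \left( \log n / n \right)^{\sigma/(d+\sigma)}$ up to absolute constants, which after adjusting $C$ and noting $\left( d^2 |g|_{\sigma}^2 \right)^{\sigma/(d+\sigma)} = \left( d\, |g|_{\sigma} \right)^{2\sigma/(d+\sigma)} \leq \left( d\, |g|_{\sigma} \right)^{2d/(d+\sigma)}$ when $d\,|g|_\sigma \geq 1$ (the remaining regime being again absorbed into $\log n/n$) gives exactly the claimed bound
\begin{eqnarray*}
C\, \E\left[ H^2(s\1_A, \hat s) \right] \leq \left( d\, |g|_{\sigma} \right)^{\frac{2d}{d+\sigma}} \left( \frac{\log n}{n} \right)^{\frac{\sigma}{\sigma+d}} + \frac{\log n}{n}.
\end{eqnarray*}
The only mildly delicate points are the discretization of the optimal $j$ (handling the rounding and the degenerate small-$n$ or small-$|g|_\sigma$ regimes so that everything fits under the single extra $\log n/n$ term) and the exact bookkeeping of the exponent on $d\,|g|_\sigma$; neither is a real obstacle, and the substance of the argument is entirely the elementary Hölder approximation estimate feeding into the already-proven Theorem~\ref{CorTheoremSelectionHisto}.
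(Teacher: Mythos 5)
Your approach is correct and is the standard one the paper leaves implicit: apply the oracle inequality~(\ref{EqOracleSelectHisto}) with the regular dyadic partition $\mathcal{K}_j \in \M_j$, bound the approximation term pointwise via the coordinatewise Hölder condition (telescoping over the $2d$ coordinates gives the deterministic bound $H^2(s\1_A,V_{m_j}) \le 2d^2|\sqrt{s}|_\sigma^2\,2^{-2j\sigma}$ with no assumption on the chain), and then balance against the penalty $L_0\,2^{2dj}\log n/n$. One small arithmetic slip: when you balance $2d^2|g|_\sigma^2\,2^{-2j\sigma}$ against $L_0\,2^{2dj}\log n/n$, the optimized value is of order $(d^2|g|_\sigma^2)^{d/(d+\sigma)}(\log n/n)^{\sigma/(d+\sigma)}$, not $(d^2|g|_\sigma^2)^{\sigma/(d+\sigma)}(\log n/n)^{\sigma/(d+\sigma)}$ as you wrote; since $(d^2|g|_\sigma^2)^{d/(d+\sigma)} = (d|g|_\sigma)^{2d/(d+\sigma)}$ exactly, the claimed bound follows directly and you do not need the extra inequality or the case split on $d|g|_\sigma \ge 1$ — only the rounding of $j$ and the small-$n$/small-$|g|_\sigma$ regimes need the additive $\log n/n$ term. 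The substance of the argument is sound.
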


\subsubsection{Besov spaces.}
A thinner way to measure the smoothness of the transition density is to assume that $\restriction{\sqrt{s}}{A}$ belongs to a Besov space. We refer to Section~3 of~\cite{DeVore1990} for a definition of this space.
We say that the Besov space $\mathscr{B}^{\sigma}_q (\L^p ([0,1]^{2 d}))$  is homogeneous when $p \geq 2$ and inhomogeneous otherwise.  
We set for all $p \in (1,+\infty)$ and $\sigma \in (0,1)$,
\begin{equation*}
 \mathscr{B}^{\sigma} (\L^p ([0,1]^{2 d}))  =
  \begin{cases}
     \mathscr{B}^{\sigma}_{p} (\L^p ([0,1]^{2 d}))  & \text{if $p \in (1,2)$}  \\
  \mathscr{B}^{\sigma}_{\infty} (\L^p ([0,1]^{2 d})) & \text{if $p \in [2,+\infty)$,} 
  \end{cases}
\end{equation*}
and  denote by $|\cdot|_{p,\sigma}$ the semi norm of $\mathscr{B}^{\sigma} (\L^p ([0,1]^{2 d}))$. 
We make the following assumption to deduce  from~{(\ref{EqOracleSelectHisto})} risk bounds over  these  spaces.
\begin{hyp} \label{hypDensiteDesXi1}
There exists $\kappa > 0$ such that for all $i \in \{0,\dots,n-1\}$, $X_i$  admits a  density~$\varphi_i$  with respect to the Lebesgue measure $\mu$ such that $\varphi_i (x)\leq \kappa$ for all $x \in [0,1]^{d}$.
\end{hyp}
Note that we do not require that the chain be either stationary or mixing. 

Let  $\left(\LL^2 ([0,1]^{2 d}, \mu \otimes \mu),d_2\right)$, be the metric space of square integrable functions on~$[0,1]^{2 d}$ with respect to the Lebesgue measure. Under the above assumption, we deduce from~(\ref{EqOracleSelectHisto}) that
\begin{eqnarray*}
C \E \left[H^2 \left(s\1_A, \hat{s}_{\hat{m}} \right) \right] \leq  \inf_{m \in \M_{\ell}} \left\{ \kappa d^2_2 \left(\restriction{\sqrt{s}}{A} , V_m \right)  + L_0 \frac{|m| \log  n }{n}  \right\}.
\end{eqnarray*}
When  $\restriction{\sqrt{s}}{A}$ belongs to a Besov space, the right-hand side of this inequality can be upper bounded thanks to the approximation theorems of~\cite{DeVore1990}.

\begin{cor}  \label{CorVitesseBesovIsotropes1}
Suppose that Assumption~\ref{hypDensiteDesXi1} holds.
For all $p \in (2 d / (d+1), +\infty)$, $\sigma \in (2 d (1/p - 1/2)_+, 1)$ and $\restriction{\sqrt{s}}{A} \in \mathscr{B}^{\sigma} (\L^p ([0,1]^ {2 d}))$, the estimator $\hat{s} = \hat{s} (L_0,\infty)$ satisfies
\begin{eqnarray} \label{EqVitesseHisto1}
C' \E \left[H^2 \left(s \1_A, \hat{s} \right) \right] \leq  \left|\restriction{\sqrt{s}}{A}\right|_{p,\sigma}^{\frac{2 d}{d +  \sigma}} \left(\frac{\log n }{n} \right)^{\frac{ \sigma}{\sigma + d}}   +  \frac{\log  n}{n} 
\end{eqnarray}
where $C' > 0$ depends only on $\kappa$,$\sigma$,$d$,$p$.
\end{cor}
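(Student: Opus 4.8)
The plan is to derive the corollary directly from the displayed oracle inequality that precedes it, namely
\[
C \E \left[H^2 \left(s\1_A, \hat{s}_{\hat{m}} \right) \right] \leq  \inf_{m \in \M_{\infty}} \left\{ \kappa d^2_2 \left(\restriction{\sqrt{s}}{A} , V_m \right)  + L_0 \frac{|m| \log  n }{n}  \right\},
\]
which is itself a consequence of Theorem~\ref{CorTheoremSelectionHisto} and Assumption~\ref{hypDensiteDesXi1} (the bound $\E[H^2(s\1_A,V_m)]\le \E[H^2(s\1_A,\restriction{(\sqrt{s})^2}{A}\,\text{-best approx.})]$ gets replaced by $\kappa\, d_2^2(\restriction{\sqrt{s}}{A},V_m)$ because $\varphi_i\le\kappa$ and $\mu([0,1]^d)=1$). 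So the whole game is to control, uniformly over $g := \restriction{\sqrt{s}}{A} \in \mathscr{B}^{\sigma}(\L^p([0,1]^{2d}))$, the quantity $\inf_{m\in\M_\infty}\{\kappa\, d_2^2(g,V_m) + L_0 |m|\log n/n\}$ by the stated rate.

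The key tool is the nonlinear (adaptive) approximation result of~\cite{DeVore1990}: for the family of partitions $\M_\infty$ built by their recursive dyadic algorithm, a function $g$ in $\mathscr{B}^{\sigma}(\L^p([0,1]^{2d}))$ with $\sigma\in(2d(1/p-1/2)_+,1)$ and $p\in(2d/(d+1),+\infty)$ can be approximated in $\L^2$ by a piecewise constant function on a partition $m\in\M_\infty$ with $|m|=N$ elements at rate
\[
d_2(g,V_m) \leq C(\sigma,d,p)\, |g|_{p,\sigma}\, N^{-\sigma/(2d)}.
\]
(The condition $\sigma < 1$ is what allows approximation by piecewise \emph{constants}; the condition $\sigma > 2d(1/p-1/2)_+$ is the Sobolev-type embedding ensuring the inhomogeneous Besov space still embeds into $\L^2$ and that nonlinear approximation gives this rate; the lower bound $p>2d/(d+1)$ is the corresponding constraint on $p$.) One has to check that the $V_m$ in this paper — nonnegative piecewise constant functions — do not cost anything: since $g=\sqrt{s}\ge 0$, the best $\L^2$ approximation on the cells of $m$ has nonnegative coefficients (the mean of a nonnegative function is nonnegative), so the best approximation in $V_m$ coincides with the best unconstrained one, and the De Vore–Yu estimate applies verbatim.

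Then I would simply optimize the resulting bound over $N\in\N^\star$. Plugging the approximation estimate in gives, for each $N$,
\[
C \E[H^2(s\1_A,\hat s)] \leq C''(\sigma,d,p,\kappa)\left( |g|_{p,\sigma}^2\, N^{-\sigma/d} + \frac{N\log n}{n}\right),
\]
and balancing the two terms by choosing $N\asymp \big(|g|_{p,\sigma}^2\, n/\log n\big)^{d/(\sigma+d)}$ (and noting $N\ge 1$, which is why the additive $\log n/n$ term appears — it absorbs the case where the optimal $N$ would be $<1$, i.e. when $|g|_{p,\sigma}$ is tiny) yields
\[
C\E[H^2(s\1_A,\hat s)] \leq C'''\, |g|_{p,\sigma}^{2d/(\sigma+d)}\left(\frac{\log n}{n}\right)^{\sigma/(\sigma+d)} + C'''\frac{\log n}{n},
\]
which is exactly~(\ref{EqVitesseHisto1}) after renaming constants. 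One must take $N$ to be the integer part of that expression and track that the rounding only changes constants.

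The main obstacle is not really an obstacle in the sense of difficulty but of bookkeeping: one must verify that the hypotheses on $(p,\sigma)$ are precisely those under which~\cite{DeVore1990}'s adaptive partitioning algorithm delivers the $N^{-\sigma/(2d)}$ rate in $\L^2$ for \emph{inhomogeneous} Besov spaces (the case $p<2$), and that the relevant partitions lie in $\M_\infty$ rather than in some larger family — i.e. that the greedy/recursive refinement used by De Vore–Yu is the same as the one defining $\M_\ell$ here. Once that identification is made, the remainder is the elementary trade-off computation above. I would also remark that the condition $\ell=\infty$ in $\hat s(L_0,\infty)$ is needed so that $\M_\infty$ contains partitions with arbitrarily many cells, hence arbitrarily fine, which is what makes the approximation term go to zero at the claimed polynomial rate.
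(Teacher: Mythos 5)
Your proposal is correct and follows essentially the paper's own route. The paper cites Theorem~2 of the Baraud--Birg\'e histogram paper, which packages exactly the De\,Vore--Yu adaptive approximation estimate plus the trade-off optimization over $|m|=N$ that you carry out by hand; the two proofs therefore share the same two ingredients (the oracle inequality reduced via Assumption~\ref{hypDensiteDesXi1} to a deterministic $d_2^2$ approximation problem, and nonlinear piecewise-constant approximation on dyadic partitions from~\cite{DeVore1990}), and your explicit observation that the nonnegativity constraint in $V_m$ is non-binding (cell averages of $\sqrt{s}\ge0$ are nonnegative) is implicit in the paper. The only thing the paper adds is Claim~\ref{ClaimInfimum}, which is used for the remark following the corollary (the case of finite $\ell$ with $2^{\ell d}\ge n$) and is not needed for the stated $\ell=\infty$ case that you prove.
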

More precisely, it is shown in the proof that the estimators $\hat{s} = \hat{s} (L_0,\ell)$ satisfy~{(\ref{EqVitesseHisto1})} when $\ell$ is large enough (when $\ell \geq d^{-1} (\log 2)^{-1} \log n$).

Rates of convergence for the deterministic loss $h$ can be established by using Theorem~\ref{CorTheoremSelectionHistoDistanceDeterministe} instead of Theorem~\ref{CorTheoremSelectionHisto}. 
For instance, if the chain is geometrically $\beta$-mixing, we may choose $\ell$ the smallest integer larger than $d^{-1} (\log 2)^{-1} \log (n / \log^3 n)$, in which case the estimator $\hat{s} = \hat{s} (L_0,\ell)$ achieves the  rate $(\log n / n)^{\sigma / (\sigma + d)}$ over the Besov spaces $\mathscr{B}^{\sigma} (\L^p ([0,1]^ {2 d}))$, $p \in (2 d / (d+1), +\infty)$, $\sigma \in (\sigma_1 (p,d), 1)$ where
\begin{eqnarray*}
\sigma_1 (p,d) = \frac{d}{4} \left(-1+4 \left(1/p - 1/2 \right)_+ +\sqrt{1+24 \left(1/p - 1/2 \right)_+ +16  \left(1/p - 1/2 \right)_+^2}\right).
\end{eqnarray*}
If the chain is arithmetically $\beta$-mixing with $b_q \leq q^{-6}$,  choosing $\ell$ the smallest integer larger than $d^{-1} (2 \log 2)^{-1} \log (n / \log n)$ allows us to recover the same rate of convergence when  $\sigma \in (\sigma_2 (p,d), 1)$
where
\begin{eqnarray*}
\sigma_2 (p,d) = d \left((1/p-1/2)_+ + \sqrt{2 (1/p-1/2)_+ + (1/p-1/2)_+^2} \right).
\end{eqnarray*}
We refer the reader to Section~\ref{SectionPreuvesCasHDeterministe} for a proof of these two results. 

In the literature,~\cite{Lacour2007} obtained a rate of order $n^{- \sigma / (\sigma + 1)}$ over $\mathscr{B}^{\sigma} (\L^2 ([0,1]^2))$, which is slightly faster but her approach prevents her to deal with inhomogeneous Besov spaces and requires the prior knowledge of a suitable upper bound on the supremum norm of $s$. 
As far as we know, the rates that have been established in the other papers hold only when $\sigma > 1$.

\section{Simulations.} \label{SectionSimus}
In this section, we present a simulation study to evaluate the performance of our estimator in practice. We shall simulate several Markov chains and estimate their transition densities by using our procedure.

\subsection{Examples of Markov chains.} \label{SectionExMarkovChains}
We consider Markov chains of the form
$$X_{k+1} = F(X_k, U_k)$$
where $F$ is some known function and where $U_k$ is a random variable  independent of $(X_0,\dots,X_{k})$.

For the sake of comparison, we begin to deal with examples that have already been considered in the simulation study of~\cite{Akakpo2011}.  In each of these examples, $U_k$ is a standard Gaussian random variable.

\begin{exemple}~\label{ExempleAr}
$X_{k+1} = 0.5 X_k +  (1+U_k)/4$
\end{exemple}

\begin{exemple}~\label{ExempleArch} 
$X_{k+1} = 12^{-1} \left(6 + \sin(12 X_k - 6) + (\cos(X_k - 6) + 3) U_k \right)$
\end{exemple}

\begin{exemple}~\label{ExempleBeta} 
$$X_{k+1} = \frac{1}{3} \left(X_k + 1 \right) + \left(\frac{1}{9} - \frac{1}{23} \left( \frac{1}{2} \beta (5 X_i/3,4,4) + \frac{1}{20} \beta \left( (5 X_i-2)/3,400, 400 \right) \right) \right) U_k$$
where  $\beta(\cdot,a,b)$ is the density of the $\beta$ distribution with parameters $a$ and $b$.
\end{exemple}

\begin{exemple}~\label{ExempleBeta2} 
$$X_{k+1} = \frac{1}{4} \left( g(X_k) + 1 \right) + \frac{1}{8} U_k$$
where $g$ is defined by
$$g(x) =  \frac{9 \sqrt{2}}{4 \sqrt{\pi}} \exp \left(-18 (x - 1/2)^2 \right) +   \frac{9 \sqrt{2}}{4 \sqrt{ \pi}} \exp \left(-162 (x - 3/4)^2 \right)  \quad \text{for all $x \in \R$.}$$
\end{exemple}
At first sight, Examples~1 and~2 may seem to be different than those of~\cite{Akakpo2011}.  Actually, we just have rescaled the data in order to estimate on $[0,1]^2$. The statistical problem is  the same.
According to~\cite{Akakpo2011}, we set $p$ large ($p = 10^4$) and we estimate the transition densities of Examples~1,~2,~3 and~4 from $(X_p,\dots,X_{n+p})$ so that the chain is approximatively stationary.

We also propose to consider the following examples. In Example~5, $U_k$ is a centred Gaussian random  variable with variance $1/2$, in Example~6, $U_k$  admits the density
$$f(x) = \frac{5 \sqrt{2}}{ 2 \sqrt{\pi} } \left[\exp \left(- 50 (x-1)^2 \right) +  \exp \left(- 50 x^2 \right) \right]$$ 
with respect to the Lebesgue measure, and in Example~7, $U_k$ is an  exponential random variable with parameter $1$. 
\begin{exemple}~\label{ExempleAr3}
$X_{k+1} = 0.5 X_k + (1+U_k)/4.$
\end{exemple}
\begin{exemple}~\label{ExempleAr2}
$X_{k+1} = 0.5 \left(X_k + U_k \right).$
\end{exemple}
\begin{exemple}~\label{Exemple5}
$X_{k+1} =X_k / (50 X_k + 1) + X_k U_k.$
\end{exemple}
We set $X_0 = 1/2$ and estimate $s$ from $(X_0,\dots,X_n)$.
These last three Markov chains are not stationary. Their transition densities are rather isotropic and inhomogeneous. The transition density of Example~7 is unbounded.

In what follows, our selection rule will always be applied with $L = 0.03$ (whatever, $\ell$, $n$ and the Markov chain). 

\subsection{Choice of $\ell$.}
We discuss the choice of $\ell$ by simulating the preceding examples with $n = 10^3$ and by applying our selection rule  for each value of $\ell \in \{1,\dots,10\}$. The results are summarized below.

\begin{figure}[H] 
\centering 
\begin{tabular}{|c|c|c|c|c|c|c|c|}
\hline 
$\ell$ & Ex 1 & Ex 2 & Ex 3 & Ex 4 & Ex 5 & Ex 6 & Ex 7 \\ 
\hline 
1 & 0.031 &  0.046  &     0.299 & 0.181 &   0.089 &   0.291  &0.358 \\ 
\hline 
2 & 0.011   &  0.015 &    0.087 &  0.107 &   0.024 & 0.170 & 0.241 \\ 
\hline 
3 & 0.011   &  0.014 &  0.026 &  0.058 & 0.013  &0.067 &  0.156 \\ 
\hline 
4 & 0.011  &  0.018 &  0.026 &0.035 & 0.015  & 0.046&  0.113 \\ 
\hline 
5 & 0.011  & 0.018 &  0.022 &   0.038 & 0.015 & 0.048 &  0.098 \\ 
\hline 
6 &  0.011 & 0.018  &  0.022 &  0.038 & 0.015  & 0.048  & 0.065 \\ 
\hline 
7 & 0.011 &  0.018  & 0.024 &   0.038 & 0.015 & 0.048 & 0.044 \\ 
\hline 
8 & 0.011 &  0.018  & 0.024 &   0.038 & 0.015 & 0.048 &  0.040\\ 
\hline 
9 & 0.011  &  0.018  &0.024  &   0.038 &0.015 & 0.048 &  0.040 \\ 
\hline 
10 & 0.011 & 0.018  &0.024  &   0.038& 0.015 &  0.048 & 0.040\\ 
\hline 
\end{tabular} 
\caption{Hellinger risk $H^2(s \1_{[0,1]^{2}},\hat{s})$.} \label{FigureRiskLVarie}
\end{figure}
When $\ell$ grows up, the risk of our estimator tends to decrease and then stabilize. The best choice of $\ell$ is obviously unknown in practice but this array shows that a good way for choosing~$\ell$ is to take it as large as possible.  This is theoretically justified by Theorem~\ref{CorTheoremSelectionHisto} since  the right-hand side of inequality~{(\ref{EqOracleSelectHisto})} is a non-increasing function of~$\ell$. 

\subsection{An illustration.}
We  apply our procedure for Examples~1 and~6  with $n = 10^4$, $\ell = 7$. We get two estimators and draw them with the corresponding transition density in Figure~\ref{FigureFunction}.

\begin{figure}[H] 
   \begin{minipage}[l]{0.5\linewidth}
     \centering \includegraphics[scale=0.19]{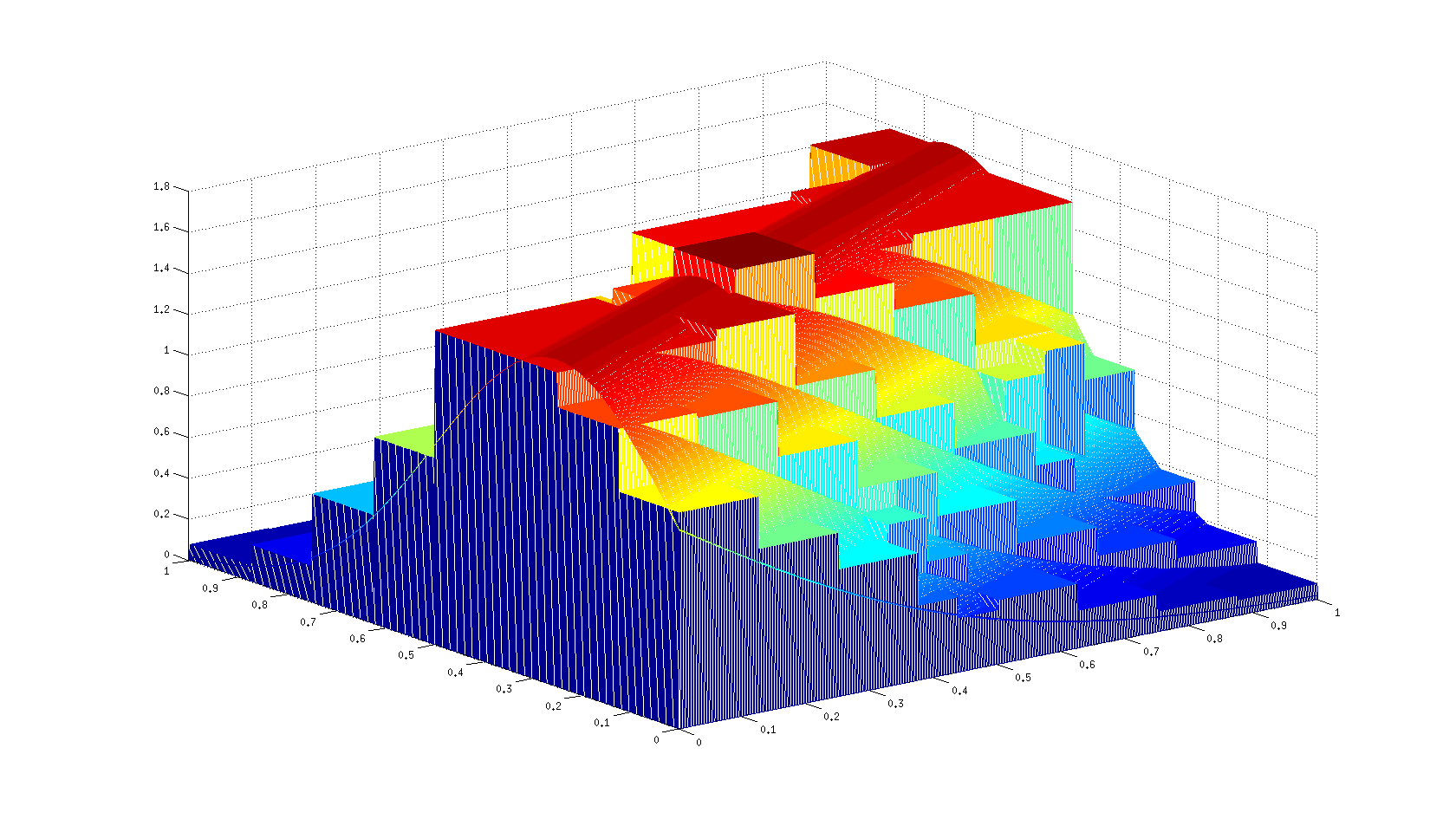}
		\begin{center}
     	 Example~1.
		\end{center}      

   \end{minipage}\hfill
   \begin{minipage}[l]{0.5\linewidth}   
      \centering \includegraphics[scale=0.19]{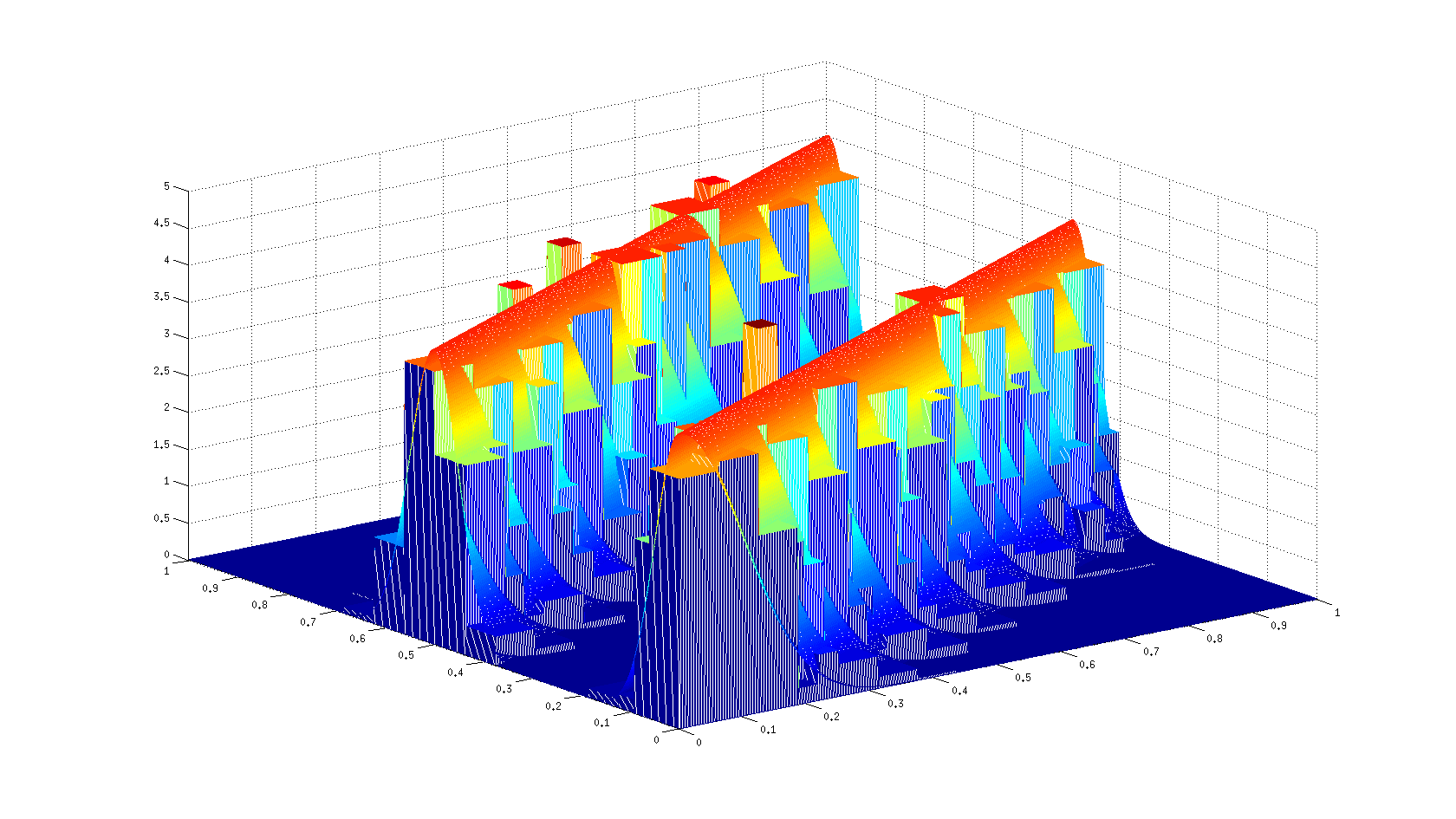}
      		\begin{center}
      Example~6.
		\end{center}      
   \end{minipage}
   \caption{Estimator and transition density.}  \label{FigureFunction}
\end{figure}
This shows that the selected partition is thinner (respectively wider) to the points where the transition density is changing rapidly (respectively slower), and is thus rather well adapted to the target function~$s$.

\subsection{Comparison with other procedures.}
In this section, we  compare our selection rule with the oracle estimator and with the piecewise constant estimator of~\cite{Akakpo2011}.

The procedure of~\cite{Akakpo2011} amounts to selecting an estimator among  $\{\hat{s}_m, m \in \M' \}$  where $\hat{s}_m$ is  defined by~{(\ref{EstimateurConstantParMorceaux})} and where $\M'$ is a collection of irregular partitions on $[0,1]^2$.
Precisely, with their notations, we apply it with $J_{\star}  = 5$, $\pen(m) = 3 \|\restriction{s}{A}\|_{\infty} |m|/n$ and with  $\pen(m) = 3 \|\hat{s}_{m^{\bullet}}\|_{\infty} |m|/n$ where $m^{\bullet}$ is a partition suitably chosen (following the recommendations of~\cite{Akakpo2011}, that is $J_{\bullet} = 3$). These two estimators are denoted by $\hat{s}^{(1)}$ and $\hat{s}^{(2)}$ respectively. Notice that these penalties, which are used in their simulation study, are not the ones prescribed by their theory. Their theoretical penalties also depend on a positive lower bound on the stationary density.
 
We denote by $\hat{s}^{(0)}$ the oracle estimator, that is the estimator defined as being a minimizer of the map $m \mapsto H^2(s 1_{[0,1]^2},\hat{s}_m)$ for $m \in \M_7$. This estimator is the best estimator of the family $\{\hat{s}_m, \, m \in \M_7\}$ and is known since the data are simulated.  
We consider the random variables
$$\mathcal{R}_i = \frac{H^2(s \1_{[0,1]^{2}},\hat{s})}{H^2(s \1_{[0,1]^{2}},\hat{s}^{(i)})} \quad \text{for $i = 0,1,2$}$$
and denote by $q_{0} (\alpha)$ the $\alpha$-quantile of  $\mathcal{R}_{0}$. Results obtained are given in Figure~\ref{figureriskcomparaison}. 

\begin{figure}[H]
\begin{center}
\begin{tabular}{|c|c|c|c|c|c|c|c|}
\hline 
  & Ex 1 & Ex 2  & Ex 3  & Ex 4  & Ex 5  & Ex 6  & Ex 7 \\ \hline 
 $\E [H^2(s \1_{[0,1]^2},\hat{s})]$  &   0.011 &    0.017 &   0.022 &  0.038  &  0.018 &  0.052  &    0.049 \\ \hlinewd{1.2pt} 
 $\E [H^2(s \1_{[0,1]^2},\hat{s}^{(0)})]$   & 0.007  & 0.011  & 0.015 &  0.028  & 0.012  &    0.037  &   0.041 \\ \hline   
  $q_{0} (0.5) $  &  1.473  & 1.513 & 1.443  &  1.369  & 1.422  &   1.420    &  1.200 \\ \hline   
 $q_{0} (0.75) $  &  1.698 &   1.627  &   1.557  & 1.440  &    1.575  &   1.481 &  1.244  \\ \hline   
 $q_{0} (0.9) $  & 1.921  &  1.834 & 1.683  & 1.509  &  1.749 &     1.543 &   1.290  \\ \hline   
 $q_{0} (0.95) $  &  2.113  &  1.965  & 1.770  & 1.558  &   1.839 &   1.590   &   1.317  \\ \hlinewd{1.2pt}
  $\E [H^2(s \1_{[0,1]^2},\hat{s}^{(1)})]$  &  0.017  & 0.018   &  0.028  &   0.058  &  0.024  &   0.103   &  - \\ \hline
 $\P \left(\mathcal{R}_{2} \leq 1\right) $ &   0.964  & 0.740   & 0.908  & 1  &  0.984 &  1  &  - \\ \hlinewd{1.2pt}
  $\E [H^2(s \1_{[0,1]^2},\hat{s}^{(2)})]$  &  0.013 &   0.018 &     0.028 &     0.062  &  0.023 &    0.096  &  0.133   \\ \hline
 $\P \left(\mathcal{R}_{3} \leq 1\right) $  &   0.832 &    0.748  & 0.928 & 1  &   0.948  &  1  &   1 \\ \hline  
 \end{tabular}
\end{center}
\caption{Risks for simulated data with $n = 1000$ averaged over $250$ samples.
}  \label{figureriskcomparaison}
\end{figure}

\subsection{Comparison with a quadratic empirical risk.}
In~\cite{Akakpo2011}, the risks of the estimators are evaluated with a empirical quadratic norm and we can also compare the performances of our estimator to theirs by using this risk. 

To do so, let us denote by $\|\cdot \|_n$ the empirical quadratic norm defined by
$$\|f\|_n^2 = \frac{1}{n} \sum_{i=1}^n \int_{\R} f^2(X_i,x) \d x \quad \text{for all $f \in \LL^2 (\R^2, M)$}$$
and set for $i \in \{1,2\}$,
$$\mathcal{R}^{'}_i = \frac{\|s \1_{[0,1]^2} - \hat{s} \|_n^2}{\|s \1_{[0,1]^2} - \hat{s}^{(i)}\|_n^2} .$$
The results obtained are presented in Figure~\ref{figureriskcomparaison2}. They are  very similar to those of Figure~\ref{figureriskcomparaison}.
\begin{figure}[H]
\begin{center}
\begin{tabular}{|c|c|c|c|c|c|c|c|}
\hline 
  & Ex 1 & Ex 2  & Ex 3  & Ex 4  & Ex 5  & Ex 6  & Ex 7 \\ \hline 
 $\E [\|s \1_{[0,1]^2} - \hat{s}\|_n^2] $ & 0.064 & 0.108  &    0.229&     0.319 & 0.116 &    0.528  &  2.82   \\ \hlinewd{1.2pt} 
  $\E [\|s \1_{[0,1]^2} - \hat{s}^{(1)}\|_n^2] $  &  0.147 &  0.133 &  0.257 &      0.423 &     0.205 &  0.743  &-   \\ \hline
 $\P (\mathcal{R}_{2}^{'} \leq 1) $  & 0.980  &  0.820 & 0.788 &   0.984  &    0.992 & 1   & -  \\ \hlinewd{1.2pt}  
  $\E [\|s \1_{[0,1]^2} - \hat{s}^{(2)}\|_n^2]  $ & 0.091 & 0.129  &  0.262 &     0.418 &  0.159 &     0.739  &  6.08  \\ \hline
 $\P (\mathcal{R}_{3}^{'} \leq 1) $    &  0.864 &  0.780 &  0.792 &  0.980  &    0.940 &  1  &  1 \\ \hline  
 \end{tabular}
\end{center}
\caption{Risks for simulated data with $n = 1000$ averaged over $250$ samples.
}  \label{figureriskcomparaison2}
\end{figure}

\section{A general procedure.} \label{SectionTheoremesGeneraux}
In Section~\ref{SectionExPartition}, we used our selection rule to establish the oracle inequality~{(\ref{EqOracleSelectHisto})}, from which we deduced rates of convergence over  Besov spaces~$\mathscr{B}^{\sigma} (\L^p ([0,1]^{2 d}))$ with~$\sigma$ lower than $1$.
We now aim at obtaining rates for more general spaces of functions.
This includes Besov spaces with regularity index larger than $1$ and spaces corresponding to structural assumptions on $s$. We propose a second procedure to reach this goal.

The Markov chain takes its values into $\XX$ and we estimate~$s$ on a subset $A$ of the form $A = A_1 \times A_2$. We always assume that $n > 3$.

\subsection{Procedure and preliminary result.} \label{SectionProcedurePreliminaire}
Our second procedure is defined as follows.
Let  $\alpha = (1-1/\sqrt{2})/2$, $L > 0$, $S$ be an at most countable set of $\L_+^1 (\XX^2, M)$ and $\Delta_S \geq 1$ be a map on $S$.

We define the application $\wp$ on $S$ by
$$\wp(f) = \sup_{f' \in S} \left[\alpha H^2 (f,f') + T(f,f') - L \frac{\Delta_S(f')}{n} \right] +  L \frac{\Delta_S(f)}{n} \quad \text{for all $f \in S$.}$$
We select $\hat{s}$ among $S$ as any element of $S$ satisfying
$$\wp(\hat{s}) \leq \inf_{f \in S} \wp(f) + \frac{1}{n}.$$
We prove the following.
\begin{prop} \label{ThmGeneral}
Suppose that  $f(x) = 0$ for all  $f \in S$ and $x \in \XX^2 \setminus A$ and that $\sum_{f \in S} e^{-\Delta_S(f)} \leq 1$.
There exists an universal constant $L_0 > 0$  such that if $L \geq L_0$,  the estimator $\hat{s}$ satisfies
\begin{eqnarray} \label{equationSelectPoints}
C \E \left[H^2 (s \1_A, \hat{s}) \right]  \leq  \E   \left[ \inf_{f \in S} \left\{ H^2(s \1_A, f)  + L \frac{\Delta_S(f)}{n} \right\}  \right] 
\end{eqnarray}
where $C$ is an universal positive constant.
\end{prop}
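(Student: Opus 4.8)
The plan is to follow the now-standard $T$-estimator / $\rho$-estimator strategy adapted to the variational functional $T$ introduced in~(\ref{TestBaraud}). The starting point is a deterministic comparison: for any fixed $f, g \in S$, one shows that $T(f,g) = -T(g,f)$ (antisymmetry is visible from the definition) and that $T(f,g)$ controls the loss in the sense that there are universal constants such that
\begin{eqnarray*}
a_1 H^2(s\1_A, g) - a_2 H^2(s\1_A, f) - Z(f,g) \;\leq\; \alpha H^2(f,g) + T(f,g) \;\leq\; a_2 H^2(s\1_A,f) - a_1 H^2(s\1_A, g) + Z(f,g),
\end{eqnarray*}
where $Z(f,g)$ is a centred (or almost centred) fluctuation term. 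First I would establish this by splitting $T(f,g)$ into the sum of the three pieces in~(\ref{TestBaraud}): the second piece is an empirical average over the transitions $(X_i,X_{i+1})$, whose conditional expectation given $X_i$ is an integral against $s(X_i,\cdot)\d\mu$, and combining it with the first and third (purely $X_i$-measurable) pieces produces, after a Cauchy–Schwarz / elementary-inequality computation with the affinity $\sqrt{u+v}(\sqrt v-\sqrt u)$, a deterministic bound in terms of $H^2(s\1_A,f)$, $H^2(s\1_A,g)$ and $H^2(f,g)$. This is exactly the variational formula for the Hellinger affinity from Section~2 of~\cite{BaraudMesure}, transported to our random measure $M$; the arithmetic is routine but must be done carefully to track the value of $\alpha = (1-1/\sqrt2)/2$ and obtain strictly positive $a_1$.

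Next I would control the stochastic fluctuation. Write $Z(f,g) = \frac1n\sum_{i=0}^{n-1}\big(\xi_i(f,g) - \E[\xi_i(f,g)\mid \F_{i}]\big)$ where $\F_i = \sigma(X_0,\dots,X_i)$ and $\xi_i(f,g)$ is the $i$-th summand of the middle term of $T$; this is a martingale difference sum. The key concentration step is a uniform deviation inequality: for a suitable event of probability at least $1 - c\,e^{-x}$, one has $|Z(f,g)| \leq \tfrac{\alpha}{2}\big(H^2(s\1_A,f)+H^2(s\1_A,g)\big) + \kappa\,\frac{\Delta_S(f)+\Delta_S(g)+x}{n}$ simultaneously over all pairs $(f,g)\in S^2$. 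Because $|\sqrt{g}-\sqrt f|/\sqrt{f+g}$ is bounded by $1$ in absolute value, each martingale increment is bounded, and Bernstein's inequality for martingales gives the per-pair bound with the conditional variance controlled by $H^2(s\1_A,f)+H^2(s\1_A,g)$ up to constants; summing the $e^{-\Delta_S(f)}e^{-\Delta_S(g)}$ weights (which is $\leq 1$ by the hypothesis $\sum_f e^{-\Delta_S(f)}\leq 1$) turns the per-pair estimate into the claimed uniform one. I would then integrate the resulting tail bound over $x>0$ to pass from a high-probability statement to the expectation in~(\ref{equationSelectPoints}).

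Finally I would combine the pieces with the definition of the selection rule. Let $f^\star$ be a near-minimiser of $f\mapsto H^2(s\1_A,f) + L\Delta_S(f)/n$ over $S$. By definition of $\hat s$, $\wp(\hat s) \leq \wp(f^\star) + 1/n$; evaluating $\wp(f^\star)$ by taking $f' = \hat s$ in its supremum gives, after the deterministic bound of step one and on the good event of step two, a comparison of the form $(a_1 - \alpha)H^2(s\1_A,\hat s) \leq C' H^2(s\1_A, f^\star) + C'' L(\Delta_S(f^\star)+\Delta_S(\hat s))/n + (\text{fluctuation}) + 2L\Delta_S(f^\star)/n - L\Delta_S(\hat s)/n$, and for $L \geq L_0$ large enough the $L\Delta_S(\hat s)/n$ term on the left absorbs the unwanted $\Delta_S(\hat s)$ contributions from the right; taking expectations and using step two to bound the fluctuation by $\tfrac{a_1}{2}H^2(s\1_A,\hat s) + \kappa/n$-type terms yields~(\ref{equationSelectPoints}). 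The main obstacle is the uniform martingale concentration in step two — getting the conditional-variance term to be genuinely of order $H^2(s\1_A,f)+H^2(s\1_A,g)$ (rather than something uncontrolled), so that it can be absorbed into the left-hand side, is where the structure of $T$ and the choice of $\alpha$ really matter; everything else is bookkeeping of universal constants.
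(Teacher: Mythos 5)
Your proposal follows the paper's proof closely: the variational inequality from Propositions~2 and~3 of \cite{BaraudMesure} relating $T$ to a centred martingale increment $Z$, a Bernstein-type martingale concentration combined with a union bound weighted by $e^{-\Delta_S(f)}$ (the paper packages this as Lemma~\ref{LemmeProprietesDuTest2}, proved along the lines of Lemma~\ref{LemmeProprietesDuTest}), and the final contrast argument built on $\wp(\hat s)\leq\wp(f^\star)+1/n$ are all exactly the ingredients used. Two small imprecisions in your sketch: the deterministic comparison of step one should concern $T(f,g)$ alone (as in display~(\ref{RelationTEtZ})), the $\alpha H^2(f,g)$ term being introduced only later via the triangle and Young inequalities when the contrast is unwound; and in step three the paper evaluates the supremum defining $\wp(\hat s)$ at $f'=f^\star$ (not $\wp(f^\star)$ at $\hat s$) and then splits on the sign of $T(f^\star,\hat s)+L\big(\Delta_S(f^\star)-\Delta_S(\hat s)\big)/n$, the role of $L_0$ being to make the concentration/union-bound step work rather than to absorb a residual $\Delta_S(\hat s)$ term.
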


\subsection{A general model selection theorem.} 
We shall deduce from the above proposition a model selection theorem by choosing suitably $S$. To do so, we consider the following assumption.
\begin{hyp} \label{hypDensiteDesXiCasGeneral}
For all $i \in \{1,\dots,n-1\}$, $X_i$  admits a density  $\varphi_i$   with respect to some known measure $\nu$ such that $\nu(A_1) = 1$.
 Moreover,  there exists $\kappa$   such that $\varphi_i (x) \leq \kappa$ for all $x \in A_1$ and $i \in \{1,\dots,n-1\}$.
\end{hyp}
We define $\LL^2 (A, \nu \otimes \mu)$ the space of square integrable functions on $A$ with respect to the product measure $\nu \otimes \mu$, and we endow it with its natural distance 
$$d^2 (f,f') =  \int_{A}  \left(f (x,y) - f'(x,y) \right)^2 \d \nu (x) \d \mu (y) \quad \text{for all $f,f' \in \LL^2 (A, {\nu} \otimes \mu)$.}$$
Hereafter, a model $V$ is a (non-trivial) finite dimensional linear space of $\LL^2 (A, \nu \otimes \mu)$.

Let us explain how to obtain a model selection theorem when Assumption~\ref{hypDensiteDesXiCasGeneral} holds. Let~$\V$ be a collection of models $V$ and let $(\Delta(V))_{V \in \V}$ be a family of non-negative numbers such that $\sum_{V \in \V} e^{- \Delta(V)} \leq 1$.
For each model $V \in \V$, we consider an orthonormal basis $(f_1,\dots,f_{\dim V})$  of~$V$ and set
$$T_V = \left\{ \sum_{i=1}^{\dim V} \alpha_i f_i, \, \alpha_i \in \frac{2}{\sqrt{n \dim V}} \Z  \right\}.$$
We  deduce from Lemma~5 of~\cite{BirgeTEstimateurs} that the cardinal of $S_V = \left\{f_+^2 \1_A, \,f \in T_{V} , \, d (f,0) \leq 2\right\}$ is upper bounded by $|S_V| \leq \left(30 n \right)^{\dim V / 2}$.
We then use the above procedure with 
$S = \cup_{V \in \V} S_V$ and 
$$\Delta_S(f) =  \inf_{\substack{V \in \V \\  S_{V} \ni f}} \left\{\Delta(V)  + (\dim V) \log \left(30 n\right)/2  \right\} \quad \text{for all $f \in S$.}$$
This yields to an estimator $\hat{s}$ such that
\begin{eqnarray*}
C' \E \left[H^2 (s \1_A, \hat{s}) \right] \leq  \inf_{V \in \V} \left\{\kappa \left(  \inf_{\substack{f \in T_V \\ d (f,0) \leq 2}} d^2 \left( \restriction{\sqrt{s}}{A}, f \right) \right) + \frac{\Delta(V)  + \dim (V) \log n}{n} \right\} 
\end{eqnarray*}
where $C'$ is an universal positive constant.
Since $d (\restriction{\sqrt{s}}{A},0) \leq 1$,
$$\inf_{\substack{f \in T_V \\ d (f,0) \leq 2}} d^2 \left( \restriction{\sqrt{s}}{A}, f \right)  =  d^2 \left( \restriction{\sqrt{s}}{A}, T_V \right).$$
For all $f' \in V$, there exists $f \in T_{V}$ such that $d^2 (f,f') \leq n^{-1}$ and thus
$$d^2 \left(\restriction{\sqrt{s}}{A}, T_{V} \right) \leq 2 d^2 \left( \restriction{\sqrt{s}}{A}, V \right) + \frac{2}{n}.$$
Precisely, we have proved: 

\begin{thm} \label{ThmSelectGeneral}
Suppose that Assumption~\ref{hypDensiteDesXiCasGeneral} holds. Let $\V$ be an at most countable collection of models.
Let  $(\Delta (V))_{V \in \V}$ be a family of non-negative numbers such that $$\sum_{V \in \V} e^{- \Delta (V)} \leq 1.$$
There exists an estimator $\hat{s}$ such that
$$C \E \left[H^2 (s \1_A, \hat{s}) \right] \leq  \inf_{V \in \V} \left\{d^2 \left( \restriction{\sqrt{s}}{A}, V \right) + \frac{\Delta (V) + \dim (V) \log n}{n} \right\}$$
where $C > 0$ depends only on $\kappa$.
\end{thm}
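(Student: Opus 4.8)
The plan is to apply Proposition~\ref{ThmGeneral} to the explicitly constructed set $S = \cup_{V \in \V} S_V$ and to the map $\Delta_S$ described just above the theorem, and then to convert the resulting bound into the stated one by a short chain of deterministic approximation estimates. First I would check the hypotheses of Proposition~\ref{ThmGeneral}: every $f \in S_V$ has the form $g_+^2 \1_A$ for some $g$, hence vanishes off $A$; and the subadditivity bound $\sum_{f \in S} e^{-\Delta_S(f)} \leq \sum_{V \in \V}\sum_{f \in S_V} e^{-\Delta(V) - (\dim V)\log(30n)/2} \leq \sum_{V \in \V} |S_V| (30n)^{-\dim V/2} e^{-\Delta(V)} \leq \sum_{V \in \V} e^{-\Delta(V)} \leq 1$, using the cardinality bound $|S_V| \leq (30n)^{\dim V/2}$ from Lemma~5 of~\cite{BirgeTEstimateurs}. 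Proposition~\ref{ThmGeneral} then yields, for $L \geq L_0$,
\begin{eqnarray*}
C \E\left[H^2(s\1_A,\hat{s})\right] \leq \E\left[\inf_{f \in S}\left\{H^2(s\1_A,f) + L\frac{\Delta_S(f)}{n}\right\}\right].
\end{eqnarray*}

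Next I would pass from the random distance $H$ to the deterministic $L^2$ distance $d$. For any $V \in \V$ and any $g \in T_V$, the function $f = g_+^2 \1_A \in S_V$ satisfies $H^2(s\1_A, f) \leq \frac{1}{2n}\sum_{i=0}^{n-1}\int_\XX (\sqrt{s(X_i,y)}\1_A - |g(X_i,y)|\1_A)^2 \d\mu(y)$; taking expectations and using Assumption~\ref{hypDensiteDesXiCasGeneral} (each $X_i$ has density $\varphi_i \leq \kappa$ on $A_1$, with $\nu(A_1)=1$) bounds this by $\kappa \int_A (\sqrt{s}\1_A - |g|)^2 \d\nu \d\mu \leq \kappa\, d^2(\restriction{\sqrt{s}}{A}, g)$, since $\sqrt{s} \geq 0$. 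Restricting the infimum over $S$ to the subfamily $S_V$ and to those $g \in T_V$ with $d(g,0) \leq 2$, and then bounding $\Delta_S(f) \leq \Delta(V) + (\dim V)\log(30n)/2$, gives
\begin{eqnarray*}
C \E\left[H^2(s\1_A,\hat{s})\right] \leq \inf_{V \in \V}\left\{\kappa \inf_{\substack{g \in T_V \\ d(g,0)\leq 2}} d^2(\restriction{\sqrt{s}}{A}, g) + L\,\frac{\Delta(V) + \dim(V)\log(30n)}{n}\right\}.
\end{eqnarray*}

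It remains to replace the net $T_V$ by the model $V$ itself. Since $d(\restriction{\sqrt{s}}{A},0) \leq 1$ (because $\int s(x,y)\1_A \d\nu\d\mu \leq \int s(x,y)\d\mu(y) \d\nu(x) \leq 1$ on $A_1$ which has $\nu$-measure one), the constraint $d(g,0)\leq 2$ is inactive for the nearest point, so the constrained infimum equals $d^2(\restriction{\sqrt{s}}{A}, T_V)$. Finally, the grid spacing of $T_V$ is chosen so that every $f' \in V$ with $d(f',0)$ bounded lies within $L^2$-distance $n^{-1/2}$ of $T_V$; combined with the triangle inequality and $(a+b)^2 \leq 2a^2+2b^2$ this gives $d^2(\restriction{\sqrt{s}}{A}, T_V) \leq 2\, d^2(\restriction{\sqrt{s}}{A}, V) + 2/n$. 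Absorbing the additive $2/n$, the factor $2\kappa$, and $\log(30n) \leq C''\log n$ (valid for $n > 3$) into the constants produces the claimed inequality, with the final constant $C$ depending only on $\kappa$.

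The one genuinely delicate point — the rest being bookkeeping — is the very first reduction: one must be sure that squaring and truncating ($g \mapsto g_+^2 \1_A$) does not degrade the approximation, i.e. that $H^2(s\1_A, g_+^2\1_A)$ is controlled by $d^2(\sqrt{s}\1_A, g)$. This works precisely because the Hellinger-type loss compares square roots and $\sqrt{g_+^2} = |g| \geq g_+$, and because $s \geq 0$ so that the truncation only helps; making this inequality clean (and checking it survives the passage through the random points $X_i$ via Assumption~\ref{hypDensiteDesXiCasGeneral}) is where the real content of the argument lies.
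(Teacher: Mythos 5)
Your proposal reproduces the paper's own proof essentially verbatim: apply Proposition~\ref{ThmGeneral} to $S = \cup_{V\in\V} S_V$ with the stated $\Delta_S$, verify the two hypotheses (support in $A$, and $\sum_{f\in S}e^{-\Delta_S(f)}\le 1$ via $|S_V|\le(30n)^{\dim V/2}$), pass the expectation through using $\varphi_i\le\kappa$, observe $d(\restriction{\sqrt{s}}{A},0)\le 1$ so the constraint $d(g,0)\le 2$ is inactive, and finish with the grid-spacing bound $d^2(\restriction{\sqrt{s}}{A},T_V)\le 2\,d^2(\restriction{\sqrt{s}}{A},V)+2/n$. One small algebraic slip to correct: since $g_+\ge 0$ one has $\sqrt{g_+^2}=g_+$, not $|g|$, and the intermediate comparison you wrote, $(\sqrt{s}-g_+)^2\le(\sqrt{s}-|g|)^2$, is false in general (take $g<0$ with $\sqrt{s}>|g|$); the correct pointwise bound is $(\sqrt{s}-g_+)^2\le(\sqrt{s}-g)^2$, valid because $g_+$ is the projection of $g$ onto $[0,\infty)$ and $\sqrt{s}\in[0,\infty)$, which gives the same final estimate $\E\bigl[H^2(s\1_A,g_+^2\1_A)\bigr]\le\kappa\,d^2(\restriction{\sqrt{s}}{A},g)$.
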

The condition $\sum_{\V \in \boldsymbol{\V}} e^{- \Delta (V)} \leq 1$ can be interpreted as a (sub)probability on the collection~$\V$. The more complex the family $\V$, the larger the weights $\Delta(V)$.
When one can choose $\Delta(V)$ of order $\dim (V)$, which means that the family $\V$ of models does not contains too many models per dimension, the estimator~$\hat{s}$ achieves the best trade-off  (up to a constant) between the approximation  and the variance terms.

This theorem holds under an assumption that is very mild and weaker than those of~\cite{Lacour2007}, \cite{Akakpo2011} and \cite{Clemenccon2000}.~\cite{Birge2012} proved a general oracle inequality when there exist integers $k \geq 1$ and $l \geq 0$ and positive numbers $\rho, \varrho$ such that
$$\varrho \leq \frac{1}{k} \sum_{j=1}^k s^{(l + j)} (x,y) \leq \rho \quad \text{for all $x,y \in \XX$}$$
where the parameters $k, l, \varrho$ are known.  Our assumption is then satisfied for the Markov chain $(X_{l+1}, \dots, X_n)$ with $\nu = \mu$ and $\kappa = k \rho$. 

We shall consider subsets $\F \subset \LL^2 (A,  {\nu} \otimes \mu)$  corresponding to smoothness or structural assumptions on  $\restriction{\sqrt{s}}{A}$. For such an $\F$, we  associate a collection $\V$ and deduce from  Theorem~\ref{ThmSelectGeneral} a risk bound for the estimator $\hat{s}$ when $\restriction{\sqrt{s}}{A}$ belongs  to  $\F$. This set is a generic notation and will change from section to section. In the remaining part of this paper, we shall always choose $\XX^2 = \R^{2 d}$, $A = [0,1]^{2 d}$ and $\mu$ the Lebesgue measure.

\subsection{Smoothness assumptions.} We have introduced in Section~\ref{SectionRatesConvergence} the isotropic Besov spaces  $\mathscr{B}_{q}^{\sigma} (\L^p ([0,1]^{2 d}))$ where $\sigma \in (0,1)$. In this section, we consider  the    anisotropic Besov spaces   $\mathscr{B}_{q}^{\boldsymbol{\sigma}} (\L^p ([0,1]^{2 d}))$ where $\boldsymbol{\sigma} = ({\sigma}_1,\dots,{\sigma}_{2 d})$ belongs to $(0,+\infty)^{2 d}$.

Intuitively, a function $f$ on $[0,1]^{2 d}$ belongs to  $\mathscr{B}_{q}^{\boldsymbol{\sigma}} (\L^p ([0,1]^{2 d}))$ if, for all $j \in \{1,\dots,2 d\}$, and $x_1,\dots,x_{j-1},x_{j+1},\dots,x_{2 d} \in [0,1]$ the function
$$x_j  \mapsto f (x_1,\dots,x_{j-1},x_j,x_{j+1},\dots,x_{2 d})$$
belongs to  $\mathscr{B}_{q}^{{\sigma}_j} (\L^p ([0,1]))$.
In particular, for all $\sigma \in (0,+\infty)$, 
$$ \mathscr{B}_{q}^{\sigma} (\L^p ([0,1]^{2 d}))  = \mathscr{B}_{q}^{(\sigma,\dots,\sigma)} (\L^p ([0,1]^{2 d})).$$
A  definition of the anisotropic Besov spaces may be found in~\cite{Hochmuth2002} (for $d  = 1$) and in~\cite{Akakpo2009} (for larger values of $d$). We also consider the space $\mathcal{H}^{\boldsymbol{\sigma}}  ([0,1]^{2 d})$ of anisotropic Hölderian functions on $[0,1]^{2 d}$ with regularity~${\boldsymbol{\sigma}}$. A precise definition of this space may be found in Section~3.1.1 of~\cite{BaraudComposite} (among other references).

For all $\boldsymbol{\sigma} = (\sigma_1,\dots,\sigma_{2 d}) \in (0,+\infty)^{2 d}$, we denote by $\bar{\boldsymbol{\sigma}}$ the harmonic mean of $\boldsymbol{\sigma}$:
$$\frac{1}{\bar{\boldsymbol{\sigma}}} = \frac{1}{2 d} \sum_{i=1}^{2 d} \frac{1}{\sigma_i}.$$
We set for all $p \in (0,+\infty]$,
\begin{equation*}
 \mathscr{B}^{\boldsymbol{\sigma}} (\L^p ([0,1]^{2 d}))  =
  \begin{cases}
  \mathscr{B}^{\boldsymbol{\sigma}}_{\infty} (\L^p ([0,1]^{2 d})) & \text{if $p \in (0,1]$} \\
   \mathscr{B}^{\boldsymbol{\sigma}}_{p} (\L^p ([0,1]^{2 d}))  & \text{if $p \in (1,2)$} \\
  \mathscr{B}^{\boldsymbol{\sigma}}_{\infty} (\L^p ([0,1]^{2 d})) & \text{if $p \in [2,+\infty)$} \\   
   \mathcal{H}^{\boldsymbol{\sigma}} ([0,1]^{2 d})  & \text{if $p = \infty$} 
  \end{cases}
\end{equation*}
and denote by $|\cdot|_{p,\boldsymbol{\sigma}}$ the semi norm associated to the space $\mathscr{B}^{\boldsymbol{\sigma}} (\L^p ([0,1]^{2 d}))$. 

In this section, we are interesting in obtaining a bound risk when $\restriction{\sqrt{s}}{A}$ belongs to the  space
$$ \mathscr{B} ([0,1]^{2 d}) =  \bigcup_{p \in (0,+\infty]} \left( \bigcup_{\substack{\boldsymbol{\sigma} \in (0,+\infty)^d \\ \bar{\boldsymbol{\sigma}} >  2 d (1/p - 1/2)_+}}   \mathscr{B}^{\boldsymbol{\sigma}} (\L^p ([0,1]^{2 d}))\right).$$
Families of linear spaces possessing good approximation properties with respect to the elements of $\F = \mathscr{B} ([0,1]^{2 d})$ can be found in Theorem~1 of~\cite{Akakpo2012}. We then  deduce   from Theorem~\ref{ThmSelectGeneral}, 

\begin{cor}  \label{CorVitesseBesovAnisotropes}
Suppose that Assumption~\ref{hypDensiteDesXiCasGeneral} holds with $\XX = \R^{d}$, $A = [0,1]^{2 d}$ and with $\nu \otimes \mu$ the Lebesgue measure. 
There exists an  estimator $\hat{s}$ such that for all $\restriction{\sqrt{s}}{A} \in  \mathscr{B} ([0,1]^{2 d})$, 
\begin{eqnarray*}
C \E \left[H^2 \left(s \1_A, \hat{s} \right) \right] \leq   \left|\restriction{\sqrt{s}}{A}\right|_{p,\boldsymbol{\sigma}}^{2 d / (d +  \bar{\boldsymbol{\sigma}})} \left(\frac{\log n}{n} \right)^{\bar{\boldsymbol{\sigma}}/ (\bar{\boldsymbol{\sigma}} + d)}  + \frac{\log n}{n}
\end{eqnarray*}
where $p \in (0, +\infty]$, $\boldsymbol{\sigma} \in (0,+\infty)^{2 d}$, $\bar{\boldsymbol{\sigma}} >  2 d (1/p - 1/2)_+$ are such that $\restriction{\sqrt{s}}{A} \in \mathscr{B}^{\boldsymbol{\sigma}} (\L^p ([0,1]^{2 d}))$ and where $C > 0$ depends only on $\kappa$,$d$,$p$,$\boldsymbol{\sigma}$.

\end{cor}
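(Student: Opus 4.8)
The plan is to deduce Corollary~\ref{CorVitesseBesovAnisotropes} from the general model selection Theorem~\ref{ThmSelectGeneral} by feeding it the right collection of models, exactly as the preceding paragraphs already set up for the isotropic case. First I would invoke Theorem~1 of~\cite{Akakpo2012}: for each resolution level, it supplies a collection $\V$ of finite-dimensional linear spaces of piecewise polynomials on dyadic (possibly anisotropic) partitions of $[0,1]^{2d}$, together with weights $(\Delta(V))_{V\in\V}$ satisfying $\sum_{V\in\V} e^{-\Delta(V)}\le 1$ with $\Delta(V)$ of order $\dim V$, and with the approximation property that for every $g\in\mathscr{B}^{\boldsymbol{\sigma}}(\L^p([0,1]^{2d}))$ with $\bar{\boldsymbol{\sigma}}>2d(1/p-1/2)_+$ one has $\inf_{V\in\V,\ \dim V\le D} d_2^2(g,V)\lesssim |g|_{p,\boldsymbol{\sigma}}^2 D^{-\bar{\boldsymbol{\sigma}}/d}$ (the standard anisotropic Besov approximation rate, valid down to the nonlinear regime $p<2$ because the collection is adaptive/nonlinear). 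Applying Theorem~\ref{ThmSelectGeneral} with this $\V$ (and $\nu\otimes\mu$ the Lebesgue measure, so $\kappa$ is as in Assumption~\ref{hypDensiteDesXiCasGeneral}) yields an estimator $\hat s$ with
\begin{eqnarray*}
C\,\E\left[H^2(s\1_A,\hat s)\right]\le \inf_{D\ge 1}\left\{\left|\restriction{\sqrt{s}}{A}\right|_{p,\boldsymbol{\sigma}}^2 D^{-\bar{\boldsymbol{\sigma}}/d} + \frac{D\log n}{n}\right\}.
\end{eqnarray*}

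Next I would optimize this bound over the dimension $D$. Balancing the two terms gives the optimal choice $D\asymp \big(|\restriction{\sqrt{s}}{A}|_{p,\boldsymbol{\sigma}}^2\, n/\log n\big)^{d/(\bar{\boldsymbol{\sigma}}+d)}$, and substituting back produces the announced rate $|\restriction{\sqrt{s}}{A}|_{p,\boldsymbol{\sigma}}^{2d/(d+\bar{\boldsymbol{\sigma}})}(\log n/n)^{\bar{\boldsymbol{\sigma}}/(\bar{\boldsymbol{\sigma}}+d)}$. Since $D$ must be a positive integer, I would take $D$ to be the smallest integer exceeding this value; the rounding contributes at most the extra additive $\log n/n$ term in the statement (this also covers the case where the optimal $D$ is smaller than $1$, i.e.\ when the semi-norm is tiny). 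The dependence of $C$ on $\kappa,d,p,\boldsymbol{\sigma}$ comes from $\kappa$ in Theorem~\ref{ThmSelectGeneral} together with the implied constant in the approximation inequality of~\cite{Akakpo2012}, which depends on $d,p,\boldsymbol{\sigma}$ only.

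One technical point to handle carefully is that the collection $\V$ from~\cite{Akakpo2012} depends on a maximal resolution level $\ell$, so strictly speaking the approximation bound $d_2^2(g,V)\lesssim |g|_{p,\boldsymbol{\sigma}}^2 D^{-\bar{\boldsymbol{\sigma}}/d}$ only holds for $D$ up to $2^{2\ell d}$ or so; one must check that choosing $\ell$ of order $\log n$ (so that the maximal available dimension exceeds the optimal $D$ computed above, for every admissible $\boldsymbol{\sigma}$ and $p$) is legitimate and still keeps $\sum_{V\in\V} e^{-\Delta(V)}\le 1$. This is routine since $\bar{\boldsymbol{\sigma}}>2d(1/p-1/2)_+\ge 0$ forces the optimal $D$ to be $o(n)$. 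The main obstacle, if any, is purely bookkeeping: correctly quoting the approximation theorem of~\cite{Akakpo2012} in a form uniform over the whole union $\mathscr{B}([0,1]^{2d})$ (all $p$, all $\boldsymbol{\sigma}$ simultaneously via a single collection $\V$), and verifying that the nonlinear/anisotropic approximation rate indeed holds in the full range $\bar{\boldsymbol{\sigma}}>2d(1/p-1/2)_+$ including $p\le 1$ where $\mathscr{B}^{\boldsymbol{\sigma}}(\L^p)$ is taken with third index $\infty$. Once that citation is in place, the rest is the elementary optimization above.
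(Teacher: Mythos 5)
Your proposal is correct and takes the same route the paper intends: the paper itself gives no detailed proof of Corollary~\ref{CorVitesseBesovAnisotropes}, stating only that it follows from Theorem~\ref{ThmSelectGeneral} combined with the approximation collections of Theorem~1 of~\cite{Akakpo2012}, and your computation (approximation bound $d_2^2(g,V)\lesssim |g|_{p,\boldsymbol{\sigma}}^2 D^{-\bar{\boldsymbol{\sigma}}/d}$ on the $2d$-dimensional domain, optimization over $D$ balanced against $D\log n/n$, with the rounding to integer $D\geq 1$ accounting for the additive $\log n/n$) is precisely what that two-line reference unpacks to. Your remark about truncating the collection at a maximal resolution level of order $\log n$ while preserving $\sum_V e^{-\Delta(V)}\leq 1$ is a reasonable piece of bookkeeping that the paper leaves implicit.
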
 
To our knowledge, the only statistical procedures that can adapt both to possible inhomogeneity  and anisotropy of $s$ are those of~\cite{Akakpo2011} and~\cite{Birge2012}. The losses  are different, but the rates are the same as ours (up to the logarithmic term). In view of our assumptions, we do not know if the logarithmic term can be avoided. 

In the following sections, we consider classes $\F$ corresponding to structural assumptions on~$\restriction{\sqrt{s}}{A}$.  More precisely, rates of convergence when the chain is autoregressive with constant conditional variance (respectively non constant conditional variance) are established in Section~\ref{SectionAutoRegressive} (respectively Section~\ref{SectionArch}).

\subsection{AR model.} \label{SectionAutoRegressive} In this section, we assume that  $X_{n+1} = g (X_n) + \varepsilon_n$
where $g$ is an unknown function and where the $\varepsilon_n$'s are  unobserved   identically distributed random variables.  Many papers are devoted to the estimation of the regression function $g$ and it is beyond the scope of this paper to make an historical review for this statistical problem. 

 For the sake of simplicity, one shall assume throughout this section that $\XX = \R$, $A = [0,1]^2$.
The transition density is of the form  $s(x,y) = \varphi (y - g (x))$
 where $\varphi$ is the density of~$\varepsilon_0$. 
Since~$g$ and~$\varphi$ are both unknown, this  suggests us to consider the  class
$$\F = \bigcup_{\sigma > 0}  \left\{f ,\, \exists \phi \in \mathcal{H}^{\sigma} (\R), \exists g \in \mathscr{B} ([0,1]),  \|g\|_{\infty} < \infty ,  \, \forall x,y \in [0,1] ,\; f(x,y) = \phi (y - g (x))   \right\}.$$
A family $\V$ of linear spaces possessing good approximation properties with respect to the functions of $\F$ can be built by using  Section~6.2 of~\cite{BaraudComposite}. Precisely, we prove the following.

\begin{cor}  \label{CorVitesseAutoRegressif}
Suppose that Assumption~\ref{hypDensiteDesXiCasGeneral} holds with $\XX = \R$, $A = [0,1]^2$ and with  $\nu \otimes \mu$ the Lebesgue measure on $\R^2$. 
Assume that  $\restriction{\sqrt{s}}{A}$ belongs to $\mathscr{F}$. Let   $\sigma > 0$, $p \in (0,+\infty]$, $\beta >  (1/p - 1/2)_+$ be any numbers and $\phi \in \mathcal{H}^{\sigma} (\R)$, $g \in \mathscr{B}^{\beta} (\L^{p} ([0,1]))$, $\|g\|_{\infty} < \infty$  be any functions such that
$$\sqrt{s (x,y)} = \phi (y - g (x)) \quad \text{for all $x,y \in [0,1]$.}$$
There exists two estimators $\hat{\phi} \geq 0$ and $\hat{g}$  such that the estimator~$\hat{s}$  defined by
 $$\hat{s}(x,y) =  \left(\hat{\phi}\left(y - \hat{g} (x)\right)  \right)^2 \1_{[0,1]^2} (x,y) \quad \text{for all $x,y \in \R$}$$
satisfies
\begin{eqnarray*}
C \E \left[H^2 \left(s \1_A, \hat{s} \right) \right] \leq   C'_1 \left(\frac{\log^2 n}{n}\right)^{\frac{2 \beta (\sigma \wedge 1)}{2 \beta (\sigma \wedge 1) + 1}}  + C'_2 \left(\frac{\log n}{n} \right)^{\frac{2 \sigma}{2 \sigma + 1}} 
\end{eqnarray*}
where $C > 0$ depends only on $\kappa$,$p$,$\sigma$,$\beta$, where $C'_1$ depends only on $p$,$\beta$,$\sigma$,$|g|_{p,\beta}$,$\|g\|_{\infty}$,$|\phi|_{\infty, \sigma \wedge 1}$ and where $C'_2$ depends only on $\sigma$,$\|g\|_{\infty}$,$|\phi|_{\infty,\sigma}$.
Moreover, the construction of the estimators $\hat{g}$, $\hat{\phi}$ depends only on the data $X_0,\dots,X_{n}$.
\end{cor}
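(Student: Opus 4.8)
The plan is to reduce Corollary~\ref{CorVitesseAutoRegressif} to Theorem~\ref{ThmSelectGeneral} by constructing a suitable collection $\V$ of models adapted to the composite class $\F$. The key observation is that if $\restriction{\sqrt{s}}{A}(x,y) = \phi(y - g(x))$ with $\phi \in \mathcal{H}^{\sigma}(\R)$ and $g \in \mathscr{B}^{\beta}(\L^p([0,1]))$ bounded, then $\restriction{\sqrt{s}}{A}$ is a function whose structure is that of a ``warped'' univariate smooth function. First I would invoke Section~6.2 of~\cite{BaraudComposite}: for each pair of resolution levels (one for approximating $g$, one for approximating $\phi$) one builds a linear space $V$ of dimension roughly $D_1 + D_2$ (say $D_1$ for the $g$-part and $D_2$ for the $\phi$-part) such that $d(\restriction{\sqrt{s}}{A}, V)$ is controlled by the sum of two approximation errors: one of order $|g|_{p,\beta} D_1^{-\beta}$ composed with the modulus of continuity of $\phi$ (which contributes the extra factor making the effective rate $\beta(\sigma\wedge 1)$ rather than $\beta$, since $\phi$ is only Hölder-$\sigma$ so errors in the argument propagate at rate $\sigma\wedge 1$), and one of order $|\phi|_{\infty,\sigma} D_2^{-\sigma}$ for approximating $\phi$ itself on $\R$ (restricted to the relevant compact range $[-\|g\|_\infty, 1+\|g\|_\infty]$, say). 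One assigns weights $\Delta(V)$ of order $\dim V \asymp D_1 + D_2$, which is summable after the usual bookkeeping, so $\sum_{V\in\V} e^{-\Delta(V)} \le 1$ holds.

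Next I would plug this into Theorem~\ref{ThmSelectGeneral} (legitimate since Assumption~\ref{hypDensiteDesXiCasGeneral} is assumed), obtaining
$$C\,\E\left[H^2(s\1_A,\hat s)\right] \le \inf_{D_1,D_2}\left\{ C_1\left(|g|_{p,\beta} D_1^{-\beta}\right)^{2(\sigma\wedge 1)} + C_2\left(|\phi|_{\infty,\sigma} D_2^{-\sigma}\right)^2 + \frac{(D_1+D_2)\log n}{n}\right\}.$$
Then I would optimize the two pairs of terms separately: balancing $D_1^{-2\beta(\sigma\wedge 1)}$ against $D_1\log n/n$ gives $D_1 \asymp (n/\log n)^{1/(2\beta(\sigma\wedge1)+1)}$ and a contribution of order $(\log n / n)^{2\beta(\sigma\wedge1)/(2\beta(\sigma\wedge1)+1)}$ — the statement has $\log^2 n$ there, which I expect comes from an additional logarithmic loss in the approximation step of~\cite{BaraudComposite} (the discretization of the warping $g$ typically costs an extra log), so I would track that factor carefully. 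Balancing $D_2^{-2\sigma}$ against $D_2 \log n/n$ gives the term $(\log n/n)^{2\sigma/(2\sigma+1)}$. The constants $C'_1, C'_2$ then have exactly the claimed dependencies.

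Finally I would address the structural form of the estimator: Theorem~\ref{ThmSelectGeneral} produces an abstract $\hat s$, but here one wants $\hat s(x,y) = (\hat\phi(y-\hat g(x)))^2\1_{[0,1]^2}$. Since each model $V$ in the collection was built from an explicit pair (a piecewise-polynomial-type approximant for $g$ on a partition, and a polynomial/spline basis for $\phi$), the selected element $\hat s = \hat s_{\hat V}$ of $S_{\hat V}$ is $f_+^2\1_A$ for some $f$ in the lattice $T_{\hat V}$, and that $f$ is by construction of the form $y \mapsto \hat\phi(y-\hat g(x))$; one reads off $\hat g$ and $\hat\phi \ge 0$ (replacing $\hat\phi$ by its positive part) from the coefficients. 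The dependence on $X_0,\dots,X_n$ only is automatic since the whole procedure (building $S$, the penalties, and the selection rule via $\wp$) uses only these observations.

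I expect the main obstacle to be the approximation-theoretic step: verifying that~\cite{BaraudComposite}'s construction applies to $\sqrt{s}$ in the form $\phi(y-g(x))$ with the right error bounds — in particular getting the composition estimate that turns a $\beta$-approximation of $g$ into a $\beta(\sigma\wedge1)$-rate for $\sqrt s$ (this is where the Hölder modulus of $\phi$ enters), and pinning down the precise power of $\log n$ in $C'_1$. Everything downstream — the summability of the weights, the application of Theorem~\ref{ThmSelectGeneral}, and the two one-dimensional optimizations — is routine.
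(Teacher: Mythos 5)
Your proposal follows the same route as the paper: reduce to Theorem~\ref{ThmSelectGeneral} via the composite-function approximation machinery of \cite{BaraudComposite}, applied after the normalization $\sqrt{s}(x,y) = \Phi(u(x,y))$ with $u(x,y) = (y - g(x))/(1 + \|g\|_\infty)$ and $\Phi(t) = \phi((1+\|g\|_\infty)t)$, and then optimize two one-dimensional trade-offs via Lemma~\ref{CollectionV}. The extra $\log n$ in $C'_1$ that you were unsure about does not come from discretizing the warping $g$ but from the penalty $\tau_n = (\log n \vee \log |\Phi|_{\infty, \sigma\wedge 1}) \frac{\log n}{n}$ that Corollary~1 of \cite{BaraudComposite} attaches to the inner-function approximation term; once this is identified the remaining computations are exactly as you describe.
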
 
In particular, if $\phi$ is very smooth (says $\sigma \geq  \beta \vee 1$), the rate of convergence corresponds to the  rate of convergence for estimating $g$ only (up to a logarithmic term). 

It is interesting to compare the preceding rate to the one we would obtain under the pure smoothness assumption on $\restriction{\sqrt{s}}{A}$ but ignoring that $\restriction{\sqrt{s}}{A}$ belongs to $\F$.
To do so, we need to specify the regularity of $\restriction{\sqrt{s}}{A}$, knowing that of $\phi$ and $g$. This is the purpose of the following lemma.  

\begin{lemme} \label{LemmeRegulariteAutoRegressif}
Let $\sigma, \beta > 0$, and let us define 
\begin{equation*}
 \theta(\beta,\sigma) =
  \begin{cases}
   \beta \sigma  & \text{if $\beta,\sigma \leq 1$} \\
   \beta \wedge \sigma & \text{otherwise.} 
  \end{cases}
\end{equation*}
Let $\phi \in \mathcal{H}^{\sigma} (\R)$, $g \in \mathcal{H}^{\beta} ([0,1])$. The function $f$ defined by
$$f(x,y) = \phi (y - g (x)) \quad \text{for all $x,y \in [0,1]$},$$
belongs to $\mathcal{H}^{(\theta(\beta,\sigma), \sigma)} ([0,1]^2)$.

Moreover, for all $\sigma,\beta > 0$, there exist $\phi \in \mathcal{H}^{\sigma} (\R)$, $g \in \mathcal{H}^{\beta} ([0,1])$ such that the function  $f$ defined by
$$f(x,y) = \phi (y - g (x)) \quad \text{for all $x,y \in [0,1]$},$$
belongs to $\mathcal{H}^{(a,b)} ([0,1]^2)$ if and only if $a \leq  \theta (\beta,\sigma)$ and $b \leq \sigma$.
\end{lemme}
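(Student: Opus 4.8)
The plan is to split the statement into its two halves: the positive assertion (that $f$ necessarily lies in $\mathcal{H}^{(\theta(\beta,\sigma),\sigma)}([0,1]^2)$) and the sharpness assertion (that $(\theta(\beta,\sigma),\sigma)$ cannot be improved). For the positive part, I would fix a point and estimate the two one-dimensional moduli of continuity of $f$ separately. In the second variable, $y\mapsto \phi(y-g(x))$ is just a translate of $\phi$, so for any fixed $x$ it inherits Hölder regularity $\sigma$ directly from $\phi\in\mathcal{H}^{\sigma}(\R)$, with the same constant $|\phi|_\sigma$; that gives the exponent $\sigma$ in the second coordinate. In the first variable, write
\begin{eqnarray*}
\bigl|\phi(y-g(x))-\phi(y-g(x'))\bigr| \leq |\phi|_{\sigma\wedge 1}\,\bigl|g(x)-g(x')\bigr|^{\sigma\wedge 1} \leq |\phi|_{\sigma\wedge 1}\,|g|_{\beta\wedge 1}^{\sigma\wedge 1}\,|x-x'|^{(\beta\wedge 1)(\sigma\wedge 1)},
\end{eqnarray*}
where in the first step I use that a $\mathcal{H}^{\sigma}$ function with $\sigma>1$ is in particular Lipschitz (hence $\mathcal{H}^{1}$, so $\mathcal{H}^{\sigma\wedge 1}$ on bounded sets), and in the second step the Hölder property of $g$. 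Unwinding the definition of $\theta$: when $\beta,\sigma\le 1$ the exponent is $(\beta\wedge1)(\sigma\wedge1)=\beta\sigma$; when either exceeds $1$, one of the two factors $\beta\wedge1$, $\sigma\wedge1$ equals $1$ and the product is $\min(\beta,1)\cdot\min(\sigma,1)$, which one checks equals $\beta\wedge\sigma$ in every remaining case (e.g. $\beta\le1<\sigma$ gives $\beta$, $\sigma\le1<\beta$ gives $\sigma$, both $>1$ gives $1=\beta\wedge\sigma$). Thus $f\in\mathcal{H}^{(\theta(\beta,\sigma),\sigma)}([0,1]^2)$.

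For the sharpness half, the "only if" direction for $b$ is immediate: taking $g$ constant makes $f(x,y)=\phi(y-c)$ a genuine $\mathcal{H}^{\sigma}$, not better, function of $y$ (choose $\phi$ with exact regularity $\sigma$, e.g. a lacunary/Weierstrass-type construction or $\phi(t)=|t|^{\sigma}$ near $0$ when $\sigma\le 1$, suitably smoothed for $\sigma>1$), so no exponent $b>\sigma$ can hold. The delicate direction is showing $a>\theta(\beta,\sigma)$ is impossible: I need a single pair $(\phi,g)$ realizing the worst case in both directions simultaneously. I would take $g\in\mathcal{H}^{\beta}([0,1])$ with a genuine "cusp" — near some $x_0$, $g(x)-g(x_0)\sim |x-x_0|^{\beta\wedge 1}$ (times, for $\beta>1$, a smooth bump keeping higher regularity elsewhere) — and $\phi\in\mathcal{H}^{\sigma}(\R)$ with a genuine cusp at the value $y_0=g(x_0)-$(something), so that $\phi(y-g(x))-\phi(y-g(x_0))$ really does decay no faster than $|g(x)-g(x_0)|^{\sigma\wedge 1}\sim |x-x_0|^{(\beta\wedge1)(\sigma\wedge1)}$. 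The point is to align the cusp of $\phi$ with the location picked out by $g$ so the two bad behaviours compound rather than cancel, and then verify the lower bound $\sup|x-x'|^{-a}|f(x,\cdot)-f(x',\cdot)|=\infty$ for any $a>\theta(\beta,\sigma)$ by evaluating along the sequence $x\to x_0$, $y=y_0$.

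The main obstacle I anticipate is this alignment in the sharpness construction, and in particular the bookkeeping when $\beta>1$ or $\sigma>1$: there one must exhibit functions that are genuinely $\mathcal{H}^{\beta}$ (resp. $\mathcal{H}^{\sigma}$) — i.e. with the right number of bounded derivatives and the fractional part of the exponent on the top derivative — while still carrying a single point where the relevant first-order (Lipschitz-scale) behaviour is sharp, since only the truncated exponents $\beta\wedge1$, $\sigma\wedge1$ enter the composition. A clean way to handle this is to build $\phi$ and $g$ as $C^\infty$ away from one point plus a model singularity ($|t|^{\sigma}$ with $\sigma\le 1$, or $t\lfloor\sigma\rfloor$-th antiderivative of $|t|^{\{\sigma\}}$ for $\sigma>1$) localized by a cutoff; routine but care is needed so the composition's anisotropic Hölder exponent in the first variable is exactly $\theta(\beta,\sigma)$ and not accidentally larger. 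Everything else is elementary modulus-of-continuity estimation.
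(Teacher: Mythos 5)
Your upper-bound argument has a real gap in the case $\beta,\sigma>1$. You assert that $\min(\beta,1)\cdot\min(\sigma,1)=\beta\wedge\sigma$ in every case where at least one exponent exceeds $1$, but when \emph{both} do, the left side is $1$ while $\beta\wedge\sigma>1$. The chained first-difference estimate
$|\phi(y-g(x))-\phi(y-g(x'))|\le|\phi|_{\sigma\wedge1}\,|g|_{\beta\wedge1}^{\sigma\wedge1}\,|x-x'|^{(\beta\wedge1)(\sigma\wedge1)}$
therefore only yields Lipschitz regularity in $x$ in that regime, whereas membership in $\mathcal{H}^{(\theta,\sigma)}$ with $\theta=\beta\wedge\sigma>1$ requires controlling $\lfloor\theta\rfloor$ partial derivatives of $x\mapsto\phi(y-g(x))$ and the Hölder modulus of the top one. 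That propagation through derivatives of the composition is exactly what the paper avoids redoing: it simply invokes Proposition~4 of \cite{BaraudComposite}, which establishes the needed anisotropic regularity of composites in full generality. Your proof, as written, only covers $\theta(\beta,\sigma)\le1$; you would need an additional inductive argument on derivative order to finish the $\beta,\sigma>1$ case.

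For the sharpness half, your idea of aligning the singularities of $\phi$ and $g$ is the right one, but the paper packages it more cleanly. It normalizes $g'(0)=0$ and then observes that with $g=-g'$ the traces $f(\cdot,0)=\phi'\circ g'$ and $f(0,\cdot)=\phi'$ reduce anisotropic sharpness in each coordinate to the exact regularity of a one-dimensional composition and of $\phi'$ itself. This removes all two-dimensional bookkeeping: one only has to exhibit $\phi',g'$ so that $\phi'\circ g'$ has exactly regularity $\theta(\beta,\sigma)$. The concrete cases are then as you guessed for $\beta,\sigma\le1$ ($\phi'(t)=t^\sigma$, $g'(x)=x^\beta$), while for the mixed cases the paper takes the factor with the \emph{smaller} exponent to carry the genuine singularity and makes the other one locally linear (e.g. $g'(x)=x$ when $\beta\ge\sigma\vee1$, or $\phi'(x)=x$ on $[0,1/2]$ with $g'$ a rescaled exactly-$\beta$-regular function when $\sigma\ge\beta\vee1$), so the composition inherits precisely that smaller exponent. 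Your ``model singularity plus cutoff'' plan would likely also work, but the linear-factor trick sidesteps the bookkeeping you flag as the main obstacle.
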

This result says that if  $\sqrt{s (x,y)} =  \phi (y - g (x)), $  with $\phi \in \mathcal{H}^{\sigma} (\R)$, $g \in \mathcal{H}^{\beta} ([0,1])$, then  $\sqrt{s}$ is 
Hölderian with regularity $(\theta(\beta,\sigma), \sigma)$ on $[0,1]^2$, and this regularity cannot be improved in general except in some particular situations. Under such a smoothness assumption, the rate of estimation we would get is  $(\log n/ n)^{2 \sigma \theta(\beta,\sigma) / (2 \sigma \theta(\beta,\sigma) + \theta(\beta,\sigma) + \sigma)}$. This rate is always slower than the rate obtained under the structural assumption.

\subsection{ARCH model.} \label{SectionArch} Throughout this section, we assume that $X_{n+1} = g_1 (X_n) + g_2 (X_n) \varepsilon_n$ where $g_1, g_2$ are unknown functions and where the $\varepsilon_n$'s are  unobserved   identically distributed  random variables.
The previous model corresponded to $g_2 = 1$. The problem of the estimation of the mean and variance functions $g_1$ and $g_2$ was considered in several papers and we refer to Section~1.2 of~\cite{Comte2002} for  bibliographical references.

For the sake of simplicity, one assumes that $\XX = \R$ and $A = [0,1]^2$.
If~$\varphi$ denotes the density of $\varepsilon_0$, the transition density $s$ is of the form 
\begin{eqnarray} \label{FormedeSmodeleARCH}
s(x,y) = |g_2(x)|^{-1} \varphi \left[ g_2^{-1} (x) \left(y - g_1 (x)\right) \right] \quad \text{for all $x,y \in \R$.}
\end{eqnarray}
We consider thus the class
\begin{eqnarray*}
\F &=& \bigcup_{\sigma > 0}  \left\{f ,\, \exists \phi \in \mathcal{H}^{\sigma} (\R), \exists v_1, v_2 \in \mathscr{B} ([0,1]), \, \|v_1\|_{\infty} < \infty, \, \|v_2\|_{\infty} < \infty ,\right.  \\
 & & \qquad \qquad \left.  \forall x,y \in [0,1] ,\,  f(x,y) = \sqrt{|v_2(x)|} \phi \left(v_2 (x) (y - v_1 (x))\right)\right\}
\end{eqnarray*}
and apply Theorem~\ref{ThmSelectGeneral} with a suitable collection $\V$ to obtain:
\begin{cor}  \label{CorVitesseARCH}
Suppose that Assumption~\ref{hypDensiteDesXiCasGeneral} holds with $\XX = \R$, $A = [0,1]^2$ and with  $\nu \otimes \mu$ the Lebesgue measure on $\R^2$. 
Assume that  $\restriction{\sqrt{s}}{A}$ belongs to $\mathscr{F}$. Let $\sigma > 0$,  $\phi \in \mathscr{B}^{\sigma} (\R)$ and for all  $i \in \{1,2\}$, let $p_i \in (0,+\infty]$, $\beta_i >  (1/p_i - 1/2)_+$, $v_i \in \mathscr{B}^{\beta_i} (\L^{p_i} ([0,1]))$, with  $\|v_i\|_{\infty} < \infty$ such that
$$\sqrt{s (x,y)} = \sqrt{|v_2(x)|} \phi \left(v_2 (x) (y - v_1 (x))\right) \quad \text{for all $x,y \in [0,1]$.}$$
Let $p_3 \in (0,+\infty]$ and $\beta_3 >  (1/p_3 - 1/2)_+$ be any numbers such that  $v_3 = \sqrt{|v_2|} \in \mathscr{B}^{\beta_3} (\L^{p_3} ([0,1]))$.
 There exists an estimator $\hat{s}$  such that 
\begin{eqnarray*}
C \E \left[H^2 \left(s, \hat{s} \right) \right] \leq   C'_1 \left(\frac{\log^2 n}{n} \right)^{\frac{2 \beta (\sigma \wedge 1)}{2 \beta  (\sigma \wedge 1) + 1}}  +  C'_2 \left( \frac{\log n}{n} \right)^{\frac{2 \sigma }{2 \sigma + 1}} 
\end{eqnarray*}
where  $\beta = \max(\beta_1,\beta_2,\beta_3)$.
The constant $C > 0$ depends only on $\kappa$,$\sigma$,$p_1$,$p_2$,$p_3$,$\beta_1$,$\beta_2$,$\beta_3$, $C'_1$ depends only on $\sigma$,$\|v_1\|_{\infty}$,$\|v_2\|_{\infty}$,$\|\varphi\|_{\infty}$,$|v_1|_{p_1,\beta_1}$,$|v_2|_{p_2,\beta_2}$,$|v_3|_{p_3,\beta_3}$,$|\varphi|_{\infty, \sigma \wedge 1}$ and $C'_2$ depends only on $\sigma$,$\|v_2\|_{\infty}$,$|\varphi|_{\infty,\sigma}$.
Moreover, the construction of the estimator $\hat{s}$ depends only on the data $X_0,\dots,X_{n}$.
\end{cor}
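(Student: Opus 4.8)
The plan is to reduce Corollary~\ref{CorVitesseARCH} to Theorem~\ref{ThmSelectGeneral} by exhibiting an appropriate at most countable collection $\V$ of finite dimensional models adapted to the class $\F$ of ARCH-type transition densities, exactly as was done for the AR model in Corollary~\ref{CorVitesseAutoRegressif} but now with the extra nuisance function $v_2$. The starting observation is that, by~(\ref{FormedeSmodeleARCH}) and the definition of $\F$, writing $v_3 = \sqrt{|v_2|}$ we have $\restriction{\sqrt{s}}{A}(x,y) = v_3(x)\, \phi\big(v_3(x)^2 (y - v_1(x))\big)$, so that $\restriction{\sqrt{s}}{A}$ is a fixed smooth function (namely $(u,w,z) \mapsto w\,\phi(w^2(z-u))$, which is $\mathcal{H}^{\sigma\wedge 1}$ in each variable on the relevant compact once $\|v_1\|_\infty,\|v_2\|_\infty<\infty$) composed with the map $x \mapsto (v_1(x), v_3(x), x)$ whose first two coordinates are Besov functions on $[0,1]$. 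This is precisely the "composite function" structure treated in Section~6.2 of~\cite{BaraudComposite}: one builds $\V$ by taking tensor-type products of (i) linear spaces approximating $\phi$ well in $\mathcal{H}^{\sigma}$, and (ii) linear spaces approximating $v_1$ and $v_3$ well in $\mathscr{B}^{\beta_1}(\L^{p_1})$ and $\mathscr{B}^{\beta_3}(\L^{p_3})$ respectively, together with weights $\Delta(V)$ of order $\dim V$ (up to logarithmic factors) so that $\sum_{V\in\V} e^{-\Delta(V)}\le 1$ and Theorem~\ref{ThmSelectGeneral} applies.

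Concretely, I would carry out the following steps in order. First, fix the decomposition $\sqrt{s}=\Phi\circ(v_1,v_3,\mathrm{id})$ with $\Phi(u,w,z)=w\phi(w^2(z-u))$ and check that $\Phi$ is Hölder of index $\sigma\wedge 1$ in each of its arguments on the compact set determined by $\|v_1\|_\infty,\|v_2\|_\infty$ and the support $[0,1]$, with Hölder seminorm controlled by $\|v_1\|_\infty,\|v_2\|_\infty,\|\varphi\|_\infty,|\varphi|_{\infty,\sigma\wedge 1}$ — this is where the two regimes $\sigma\le 1$ and $\sigma>1$ enter and why $\sigma\wedge 1$ and the $\log^2 n$ appear in the first term. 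Second, invoke the approximation result of~\cite{BaraudComposite} (Section~6.2) to produce, for each pair of "resolution levels", a model $V$ in $\LL^2([0,1]^2,\nu\otimes\mu)$ with $\dim V$ controlled and $d^2(\restriction{\sqrt{s}}{A}, V)$ bounded by the sum of an $\mathcal{H}^{\sigma}$-approximation error in the "$\phi$-direction" and $\mathscr{B}^{\beta_i}(\L^{p_i})$-approximation errors in the "$v_i$-directions". Third, plug this into Theorem~\ref{ThmSelectGeneral}: choosing $\Delta(V)$ proportional to $\dim V$ (the composite collection is sparse enough per dimension, which is the content of the Kraft-type inequality in~\cite{BaraudComposite}), the right-hand side becomes an infimum over resolution levels of (approximation error) $+$ $(\dim V \log n)/n$. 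Fourth, optimize this bias–variance trade-off over the two families of levels: balancing the $v_i$ bias of order (level)$^{-2\beta_i}$ against the corresponding dimension cost yields the rate $(\log^2 n/n)^{2\beta(\sigma\wedge 1)/(2\beta(\sigma\wedge 1)+1)}$ with $\beta=\max(\beta_1,\beta_2,\beta_3)$, and balancing the $\phi$ bias of order (level)$^{-\sigma}$ against its dimension cost yields $(\log n/n)^{2\sigma/(2\sigma+1)}$; the two contributions add, giving the claimed bound. Fifth, note that since every model $V$ in $\V$ is built from a linear span of functions depending only on the construction (not on $s$), and the selection rule of Proposition~\ref{ThmGeneral} uses only $X_0,\dots,X_n$, the resulting $\hat s$ — which one may take of the form $(\hat v_3(x)\phi\cdot)^2$ after reading off the composite structure, or simply $\hat s_{\hat V}$ — depends only on the data, and the constants track through as stated.

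The main obstacle, as in Corollary~\ref{CorVitesseAutoRegressif}, is the approximation-theoretic step: one must show that composing a fixed Hölder function $\Phi$ with a vector of Besov functions still lies close to a low-dimensional linear space, with the approximation error decoupling into the separate bias terms, and — crucially — with a collection $\V$ that does not contain too many models per dimension so that the choice $\Delta(V)\asymp \dim V$ is legitimate. This is exactly what Section~6.2 of~\cite{BaraudComposite} provides (it is built to handle such "single-index / composite" structures), so the work is mainly in verifying that the ARCH parametrization $(x,y)\mapsto \sqrt{|v_2(x)|}\,\phi(v_2(x)(y-v_1(x)))$ fits their framework after the change of variables to $(v_1,v_3,\mathrm{id})$, in particular that the map $\Phi$ has the claimed per-variable Hölder regularity $\sigma\wedge 1$ uniformly on the relevant compact. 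The optimization in the last step and the bookkeeping of constants are routine once the approximation bound is in hand, and the extra variable $v_2$ compared to the AR case only changes $\beta$ from $\beta$ to $\max(\beta_1,\beta_2,\beta_3)$ and introduces the auxiliary exponent $\beta_3$ for $v_3=\sqrt{|v_2|}$.
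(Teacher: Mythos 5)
Your overall strategy — reduce to Theorem~\ref{ThmSelectGeneral} via a composite-function approximation scheme from Section~6.2 of~\cite{BaraudComposite}, then optimize the bias–variance trade-off — is the same one the paper follows, but your specific decomposition contains a genuine error that would break the argument for general $v_2$.

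You write $\restriction{\sqrt{s}}{A}(x,y) = v_3(x)\,\phi\bigl(v_3(x)^2 (y - v_1(x))\bigr)$ with $v_3 = \sqrt{|v_2|}$, and then build a composite structure with only \emph{two} inner Besov functions, $v_1$ and $v_3$. But $v_3^2 = |v_2|$, not $v_2$: the identity $\sqrt{|v_2|}\,\phi\bigl(v_2(y-v_1)\bigr) = v_3\,\phi\bigl(v_3^2(y-v_1)\bigr)$ holds only when $v_2\ge 0$ everywhere on $[0,1]$. The corollary imposes no sign restriction on $v_2$, so your decomposition does not recover $\sqrt{s}$ in general. A symptom of this is that your proof would produce a rate depending only on $\beta_1$ and $\beta_3$, whereas the stated bound genuinely involves $\beta_2$ as well (through $\beta = \max(\beta_1,\beta_2,\beta_3)$, with a separate variance/bias contribution for $v_2$). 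The paper avoids this by keeping $v_1$, $v_2$, $v_3$ as \emph{three} independent inner functions: it sets $u_1=(y-v_1)/(1+\|v_1\|_\infty)$, $u_2=v_2/\|v_2\|_\infty$, $u_3=v_3/\|v_3\|_\infty$ and the outer map $\Phi(x,y,z)=\|v_3\|_\infty\, z\,\varphi\bigl((1+\|v_1\|_\infty)\|v_2\|_\infty\, xy\bigr)$, so $\sqrt{s}=\Phi(u_1,u_2,u_3)$ holds unconditionally; it then applies Corollary~1 of~\cite{BaraudComposite} with four approximation families $\TT_1$ (for $u_1$), $\TT_2$ (for $u_2$), $\TT_3$ (for $u_3$) and $\mathbb{F}$ (for $\Phi$), which is where the three exponents $\beta_1,\beta_2,\beta_3$ and the Hölder exponent $\sigma$ all enter. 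To repair your argument you would need to add $v_2$ as a third inner coordinate (rather than trying to read it off from $v_3$), define $\Phi$ so that the product structure in the argument of $\varphi$ is handled as a fixed function of the three coordinates, and only then invoke the composite approximation lemma and Lemma~\ref{CollectionV}. (As a side remark, your inner map as written is $x\mapsto(v_1(x),v_3(x),x)$, which cannot depend on $y$; the last slot must carry $y$.)
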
 
If $s$ is of the form~{(\ref{FormedeSmodeleARCH})} with $\varphi$, $g_1$, $g_2$ smooth, in the sense that 
$\phi = \sqrt{\varphi} \in \mathcal{H}^{\sigma} (\R)$, $v_1 = g_1 \in \mathscr{B}^{\beta_1} (\L^{p_1} ([0,1]))$, $\|v_1\|_{\infty} < \infty$, $v_2 = g_2^{-1} \in \mathscr{B}^{\beta_2} (\L^{p_2} ([0,1]))$, $\|v_2\|_{\infty} < \infty$ and $v_3 = |g_2|^{-1/2} \in \mathscr{B}^{\beta_3} (\L^{p_3} ([0,1]))$, then $\restriction{\sqrt{s}}{A}$ belongs to $\F$. If $\phi$ is sufficiently smooth ($\sigma \geq \beta_1 \vee \beta_2 \vee \beta_3 \vee 1$), the rate becomes
 \begin{eqnarray*}
C'' \E \left[H^2 \left(s, \hat{s} \right) \right] \leq \max \left(\left(\frac{\log^2 n}{n} \right)^{\frac{2 \beta_1}{2 \beta_1   + 1}},  \left(\frac{\log^2 n}{n}\right)^{\frac{2 \beta_2 }{2 \beta_2  + 1}},  \left(\frac{\log^2 n}{n}\right)^{\frac{2 \beta_3 }{2 \beta_3  + 1}} \right).
\end{eqnarray*}
Up to a logarithmic term, the first term  corresponds to the bound we would get if we could estimate $g_1$ only. The two other terms correspond to the rate of estimation of $ g_2^{-1}$ and $ |g_2|^{-1/2}$ respectively (up to a logarithmic term).

 Note that if $\beta_2 \in (0,1)$, one can always choose $p_3 = 2 p_2$ (with $p_3 = \infty$ if $p_2 = \infty$), $\beta_3 = \beta_2 /2$, in which case the rate becomes
 \begin{eqnarray*} \label{ConvergenceModelARCH}
C'' \E \left[H^2 \left(s, \hat{s} \right) \right] \leq \max \left(\left(\frac{\log^2 n}{n} \right)^{\frac{2 \beta_1}{2 \beta_1   + 1}},  \left(\frac{\log^2 n}{n}\right)^{\frac{ \beta_2 }{ \beta_2   + 1}} \right).
\end{eqnarray*}
In some situations however, $\beta_3$ can be taken larger than $\beta_2$. 

As in the preceding section, we may use the lemma below to compare this rate with the one we would obtain under smoothness assumptions on $\restriction{\sqrt{s}}{A}$.
\begin{lemme} \label{LemmeVitesseArch}
Let for all $\sigma, \beta_1,\beta_2 > 0$,
\begin{equation*}
 \theta(\beta_1,\beta_2,\sigma) =
  \begin{cases}
   \left(2^{-1} (\beta_2 \wedge 1 ) \right) \wedge \sigma \beta_1 \wedge \sigma \beta_2  & \text{if $\sigma \leq 1$ and $\beta_1 \wedge \beta_2 \leq 1$} \\
   \left(2^{-1} (\beta_2 \wedge 1 ) \right) \wedge \sigma \wedge \beta_1 & \text{otherwise.} 
  \end{cases}
\end{equation*}
Let $\phi \in \mathcal{H}^{\sigma} (\R)$, $v_1 \in \mathcal{H}^{\beta_1} ([0,1])$, $v_2 \in \mathcal{H}^{\beta_2} ([0,1])$. The function $f$ defined by
$$ f(x,y) = \sqrt{|v_2(x)|} \phi \left(v_2 (x) (y - v_1 (x))\right) \quad \text{for all $x,y \in [0,1]$,}$$
belongs to $\mathcal{H}^{\left(\theta(\beta_1,\beta_2,\sigma), \sigma\right)} ([0,1]^2)$.

Moreover, there exist $\phi \in \mathcal{H}^{\sigma} (\R)$, $v_1 \in \mathcal{H}^{\beta_1} ([0,1])$, $v_2 \in \mathcal{H}^{\beta_2} ([0,1])$ such that the function $f$ defined by
$$ f(x,y) = \sqrt{|v_2(x)|} \phi \left(v_2 (x) (y - v_1 (x))\right) \quad \text{for all $x,y \in [0,1]$,}$$
belongs to $\mathcal{H}^{\left(a,b\right)} ([0,1]^2)$
if and only if $a \leq \theta(\beta_1,\beta_2,\sigma)$ and $b\leq \sigma$.
\end{lemme}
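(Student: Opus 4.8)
The plan is to prove the two assertions separately: the membership $f\in\mathcal{H}^{(\theta(\beta_1,\beta_2,\sigma),\sigma)}([0,1]^2)$ is an exercise in the calculus of (possibly higher-order) Hölder spaces, while the optimality of this pair of exponents requires assembling several one-dimensional ``extremal'' examples at disjoint locations so that they do not interfere. I begin with the regularity in $y$. Fix $x\in[0,1]$: the section $y\mapsto f(x,y)$ is $\phi$ precomposed with the affine substitution $y\mapsto v_2(x)(y-v_1(x))$ and then multiplied by the constant $\sqrt{|v_2(x)|}$. An affine change of variables sends $\mathcal{H}^\sigma(\R)$ into $\mathcal{H}^\sigma$ while multiplying the semi-norm by $|v_2(x)|^\sigma$, and $v_1,v_2$ are bounded on $[0,1]$ because they are continuous; hence $f(x,\cdot)\in\mathcal{H}^\sigma([0,1])$ with a semi-norm bounded uniformly in $x$. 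This gives the second coordinate.

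For the regularity in $x$, fix $y\in[0,1]$ and write $f(\cdot,y)=h_1h_2$ with $h_1=\sqrt{|v_2|}$ and $h_2=\phi\circ g$, where $g=v_2\cdot(y-v_1)$. I would use four elementary facts: (i) $t\mapsto\sqrt{|t|}$ is $1/2$-Hölder on $\R$, and $v_2\in\mathcal{H}^{\beta_2}$ is in particular $(\beta_2\wedge1)$-Hölder, so $h_1\in\mathcal{H}^{2^{-1}(\beta_2\wedge1)}$; (ii) a product of two bounded Hölder functions on a bounded interval lies in the Hölder space with the smaller exponent, so $g\in\mathcal{H}^{\beta_1\wedge\beta_2}$; (iii) by the chain rule, $\phi\circ g\in\mathcal{H}^{\sigma((\beta_1\wedge\beta_2)\wedge1)}$ when $\sigma\le1$ and $\phi\circ g\in\mathcal{H}^{\sigma\wedge\beta_1\wedge\beta_2}$ when $\sigma>1$; (iv) applying (ii) once more to $h_1h_2$. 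A short case check — using $2^{-1}(\beta_2\wedge1)\le\beta_2$, and, in the first branch of the definition of $\theta$, the hypothesis $\beta_1\wedge\beta_2\le1$ to discard a spurious $\sigma$ term — shows the exponent obtained is exactly $\theta(\beta_1,\beta_2,\sigma)$. All semi-norms are uniform in $y\in[0,1]$ since $v_1,v_2$ are bounded; this is the first coordinate, and the direct inclusion follows.

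For the converse I would take $\phi=\phi_0$ a fixed function saturating the $\mathcal{H}^\sigma(\R)$ condition at every scale (a lacunary series plus, if $\sigma>1$, a polynomial part), chosen so that the pair $(\phi_0,\cdot)$ is the extremal one furnished by Lemma~\ref{LemmeRegulariteAutoRegressif}. I would take $v_1$ constant except on a small interval around a point $x_1$, where it coincides with the corresponding extremal $\mathcal{H}^{\beta_1}$ function of Lemma~\ref{LemmeRegulariteAutoRegressif}; and $v_2$ equal to a positive constant except near two further disjoint points: around $x_0$ it has a simple zero (so, when $\beta_2<1$, $v_2$ vanishes like $|x-x_0|$ while remaining globally $\mathcal{H}^{\beta_2}$, and $\sqrt{|v_2|}$ is exactly $2^{-1}(\beta_2\wedge1)$-Hölder there), and around $x_2$ it stays bounded away from $0$ while saturating $\mathcal{H}^{\beta_2}$; elsewhere $v_1,v_2$ are locally constant, the patching being smooth. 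Near $x_0$, $f(\cdot,y)$ is $\sqrt{|v_2|}$ times a smooth nonvanishing factor, hence no smoother than $\mathcal{H}^{2^{-1}(\beta_2\wedge1)}$; near $x_1$, $v_2$ is constant so $f(\cdot,y)$ is a rescaled $\phi_0\bigl(c-c'v_1(\cdot)\bigr)$, which by Lemma~\ref{LemmeRegulariteAutoRegressif} is no smoother than $\mathcal{H}^{\theta(\beta_1,\sigma)}$; near $x_2$, choosing $y$ away from the local value of $v_1$, $f(\cdot,y)$ is a smooth nonvanishing factor times $\phi_0\circ g$ with $g$ of exact regularity $\beta_2$, hence no smoother than $\mathcal{H}^{\sigma(\beta_2\wedge1)}$ (resp. $\mathcal{H}^{\sigma\wedge\beta_2}$). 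Taking the minimum of these local obstructions yields $a\le\theta(\beta_1,\beta_2,\sigma)$, while $b\le\sigma$ holds because for any $x$ with $v_2(x)\ne0$ the section $f(x,\cdot)$ is an affine reparametrisation of $\phi_0$.

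The delicate step is the optimality part: one must check that each local bad behaviour genuinely survives both the composition with $\phi$ and the multiplication by $\sqrt{|v_2|}$, i.e.\ that no accidental cancellation occurs, which is most awkward at the square-root singularity and, when $\sigma>1$ or $\beta_i>1$, forces careful higher-order Hölder bookkeeping (chain and Leibniz rules for non-integer orders); stitching the localized constructions into globally $\mathcal{H}^{\beta_i}([0,1])$ functions is routine. The direct inclusion is essentially bookkeeping once the composition and product rules for Hölder spaces are stated precisely.
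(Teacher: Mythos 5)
Your treatment of the direct inclusion---chain and product rules for Hölder spaces---is essentially the paper's, which simply cites Proposition~4 of~\cite{BaraudComposite}. For the optimality part, however, you take a genuinely different route. The paper, mimicking its proof of Lemma~\ref{LemmeRegulariteAutoRegressif}, first reduces to a one-dimensional problem: find $\phi',v_1',v_2'$ with $v_1'(0)=0$, $v_2'(0)=1$ such that $\sqrt{v_2'}\,\phi'(v_1'v_2')$ has exact regularity $\theta(\beta_1,\beta_2,\sigma)$, then recover the two-dimensional $f$ by slicing along $y=0$ and $x=0$. It then runs a short case analysis over the five possible values of $\theta$, giving a closed-form triple for each (e.g.\ $v_2'(x)=(1-x)^{1\wedge\beta_2}$ when $\theta=2^{-1}(\beta_2\wedge1)$; $\phi'(x)=x^{\sigma}$, $v_1'(x)=x^{\beta_1}$, $v_2'\equiv1$ when $\theta=\sigma\beta_1$; and so on). You instead propose a single construction carrying three localized obstructions at disjoint points $x_0,x_1,x_2$, expecting the smallest local exponent to equal $\theta$, thereby trading the case split for a more delicate gluing.

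There is a concrete error at $x_0$ that breaks your version as written. You place a simple zero of $v_2$ there (``$v_2$ vanishes like $|x-x_0|$'') and assert that $\sqrt{|v_2|}$ is then exactly $2^{-1}(\beta_2\wedge1)$-Hölder. When $\beta_2<1$ this is false: a first-order zero gives $\sqrt{|v_2|}\sim|x-x_0|^{1/2}$, which is $1/2$-Hölder, strictly smoother than $\beta_2/2$. You must make $v_2$ vanish like $|x-x_0|^{\beta_2\wedge1}$---precisely the paper's choice $(1-x)^{1\wedge\beta_2}$. Concretely, with $\sigma=\beta_1=1$ and $\beta_2=1/2$ one has $\theta=1/4$, yet all three of your local exponents are at least $1/2$, so your $f$ lies in $\mathcal{H}^{1/2}$ in $x$ and is no longer extremal. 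Beyond this, the steps you defer as ``delicate'' or ``routine'' contain the real work: a single $\phi_0$ must serve as the extremal outer function simultaneously at $x_1$ and $x_2$, while the examples in the proof of Lemma~\ref{LemmeRegulariteAutoRegressif} choose the inner and outer functions jointly and case by case, and a Weierstrass-type $\phi_0$ that is bad at every scale makes the composition estimate harder rather than easier; moreover, when $\sigma>1$ or some $\beta_i>1$, the piecewise pieces have to be stitched into globally higher-order Hölder functions, which is not automatic. The paper's explicit one-dimensional formulas make the exact-regularity check immediate and avoid all of this.
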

This proposition says that if $\sqrt{s (x,y)} = \sqrt{|v_2(x)|} \phi \left(v_2 (x) (y - v_1 (x))\right),$ with $\phi \in \mathcal{H}^{\sigma} (\R)$, $v_1 \in \mathcal{H}^{\beta_1} ([0,1])$, $v_2 \in \mathcal{H}^{\beta_2}  ([0,1])$, $\restriction{\sqrt{s}}{A}$ belongs to  $\mathcal{H}^{\left(\theta(\beta_1,\beta_2,\sigma), \sigma\right)}  ([0,1]^2)$ and the regularity index of this space cannot be increased in general. 
By Corollary~\ref{CorVitesseBesovAnisotropes}, we would get a rate of order
$\left(\log n/n \right)^{2 \theta(\beta_1,\beta_2,\sigma) \sigma/(2 \theta(\beta_1,\beta_2,\sigma) \sigma + \theta(\beta_1,\beta_2,\sigma) + \sigma)},$
which is slower than the one given by Corollary~\ref{CorVitesseARCH}.

\section{Appendix: implementation of the first procedure.} \label{SectionAlgorithm}
In this section, we explain how to construct in practice the estimator of the first procedure. This will lead to the proposition below.
\begin{prop} \label{PropositionComplexity}
For all $L > 0$, $\ell \in \N^{\star}$, the estimator $\hat{s} = \hat{s} (L,\ell)$ of Section~\ref{SectionProcedure} can be built in less than $C \left( n \ell  d + \ell 4^{(\ell + 1) d} \right) $ operations 
where~$C$ is an universal constant.
\end{prop}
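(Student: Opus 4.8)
The plan is to follow the selection rule of Section~\ref{SectionProcedure} step by step and to count the elementary operations incurred at each stage, using the structure of the dyadic family $\M_\ell$ built via the algorithm of~\cite{DeVore1990}. First I would observe that the whole computation can be organized around the tree of dyadic cubes of depth at most $\ell$: there are at most $\sum_{j=0}^{\ell} 4^{jd} \leq C 4^{(\ell+1)d}$ such cubes $K_{j,\mathbf{l}}$, and the partitions in $\M_\ell$ are exactly the ones obtained by cutting this tree at an ``antichain''. The key preliminary step is to compute, for every cube $K$ in the tree, the two quantities $\sum_{i=0}^{n-1}\1_K(X_i,X_{i+1})$ and $\sum_{i=0}^{n-1}\int_{[0,1]^d}\1_K(X_i,x)\,\d\mu(x)$ that appear in the estimator $\hat s_m$ of~(\ref{EstimateurConstantParMorceaux}); once these are known for the $4^{(\ell+1)d}$ finest cubes, all the coarser ones are obtained by summing over children, which costs $O(4^{(\ell+1)d})$ in total, while locating, for each of the $n$ data points $(X_i,X_{i+1})$ and each $X_i$, the finest cube containing it costs $O(\ell d)$ per point (reading off $\ell$ dyadic digits in each of the $2d$, resp.\ $d$, coordinates), hence $O(n\ell d)$ altogether. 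This already produces the two summands $C n\ell d$ and $C\ell 4^{(\ell+1)d}$ of the announced bound, so the remaining task is to show that the \emph{selection} of $\hat m$ costs no more than this.

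Next I would analyse the computation of $\gamma(m)$. The crucial observation — which is precisely what makes this procedure implementable and is the point of using the \cite{DeVore1990} family — is that $\gamma(m)$ decomposes as a sum over the cubes $K\in m$ of a quantity
$$\Gamma(K) = \sup_{m'\in\M_\ell}\Big[\sum_{K'\in m'}\big(\alpha H^2(\hat s_m\1_{K\cap K'},\hat s_{m'}\1_{K\cap K'}) + T(\hat s_m\1_{K\cap K'},\hat s_{m'}\1_{K\cap K'})\big) - \pen(m'\vee K)\Big]$$
that depends only on $K$ (through the constant value of $\hat s_m$ on $K$, which is itself determined by $K$ once the two counters above are known) plus the penalty term $2\pen(m)$. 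So one never iterates over the exponentially many partitions $m$: instead one computes $\Gamma(K)$ once for each cube $K$ in the tree, and then a minimizing partition is found by a single bottom-up dynamic-programming pass over the tree. Concretely, for a cube $K$ at level $j<\ell$, the optimal contribution of the subtree rooted at $K$ is the minimum of $\Gamma(K)$ (do not split $K$) and the sum of the optimal contributions of the $4^d$ children of $K$ (split $K$); comparing and storing this costs $O(4^d)$ per internal node, hence $O(4^{(\ell+1)d})$ in total, and the penalty $2\pen(m)=2L|m|\log n/n$ is additive over the chosen leaves so it is absorbed into the same recursion. The $+1/n$ slack in~(\ref{eqAminimiser}) is irrelevant since the exact minimizer is found.

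The remaining, and main, obstacle is the evaluation of $\Gamma(K)$ itself, because the inner supremum over $m'\in\M_\ell$ is again a supremum over exponentially many partitions. Here I would argue that, for fixed $K$, the map $m'\mapsto\sum_{K'\in m'}\big(\alpha H^2(\cdots)+T(\cdots)\big)-\pen(m'\vee K)$ is \emph{again additive} over the cells $K'$ of $m'$ restricted to $K$: each cell $K\cap K'$ contributes a term depending only on the pair $(K,K\cap K')$ — through the two counters evaluated on $K\cap K'$ and on $K$ — and the penalty $\pen(m'\vee K)=L|m'\vee K|\log n/n$ is, up to the additive normalization, proportional to the number of such cells. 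Hence the same bottom-up dynamic programming over the subtree rooted at $K$ computes this inner supremum; running it for every $K$ in the tree and being careful that a node at level $j$ appears in the subtrees of its $j$ ancestors gives a total cost of $O(\ell\,4^{(\ell+1)d})$ — each cube is visited $O(\ell)$ times, once per ancestor, and at each visit an $O(4^d)$ comparison is done. Evaluating a single term $\alpha H^2(\hat s_m\1_{K\cap K'},\hat s_{m'}\1_{K\cap K'})+T(\cdots)$ is an $O(1)$ arithmetic operation once the relevant counters are tabulated, because on the product cube $K\cap K'$ both $\hat s_m$ and $\hat s_{m'}$ are constant, so the integrals in the definitions of $H^2$ and of $T$ in~(\ref{TestBaraud}) reduce to closed-form expressions in those constants and in $\mu(I_j(l_i))=2^{-j}$. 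Collecting the bounds — $O(n\ell d)$ for digit extraction and counter accumulation, $O(\ell\,4^{(\ell+1)d})$ for all the nested dynamic-programming passes, and $O(4^{(\ell+1)d})$ for the final selection — yields the claimed bound $C(n\ell d+\ell 4^{(\ell+1)d})$, and I would close by noting that the implicit constant $C$ is universal since every per-operation cost counted above is.
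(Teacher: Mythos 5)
Your proposal follows essentially the same route as the paper's proof: precompute the two counters for every dyadic cube in $O(n\ell d + 4^{(\ell+1)d})$ operations, exploit the additivity of the criterion over the cells of a partition to replace the exponential search over $\M_\ell$ by a bottom-up dynamic program on the $4^d$-ary tree (Lemma~\ref{LemmeLienArbreEtPartition}), and obtain the minimizer of $\gamma$ by one more pass. The paper's Step~2 counts, for a cube $K$ at level $j$, a cost $O(4^{(\ell-j)d}+4^d j)$ with the second term coming from the chain of \emph{ancestors} of $K$ (the cells $K'\supsetneq K$ that $m'$ may choose), whereas your accounting (``each cube is visited once per ancestor, $O(4^d)$ per visit'') only tracks the subtree-of-$K$ part and omits the ancestor chain, which also enters $\Gamma(K)$ through $\pen(m'\vee K)$ and $F_K(K')$ with $K'\supsetneq K$. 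This omission is harmless because the ancestor-chain contribution is $O(4^d\sum_j j\,4^{jd})=O(\ell\,4^{(\ell+1)d})$, the same order, so your final bound $C(n\ell d+\ell\,4^{(\ell+1)d})$ agrees with the paper's; you may simply want to make explicit that the DP runs over the full tree (with cubes disjoint from $K$ contributing zero) rather than only the subtree rooted at $K$.
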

We set for all $K \in \cup_{m \in \M_{\ell}} m$,
$$ \hat{s}_K = \frac{\sum_{i=0}^{n-1} \1_K (X_i,X_{i+1})}{\sum_{i=0}^{n-1} \int_{\XX} \1_K (X_i,x) \d \mu (x)} \1_K,$$ for all $K' \in \cup_{m \in \M_{\ell}} m$,
$$F_K (K') = \alpha H^2 \left(\hat{s}_K \1_{K'}, \hat{s}_{K'}\1_{K}\right) +  T \left(\hat{s}_K \1_{K'}, \hat{s}_{K'}\1_{K}\right),$$ and for all $m' \in \M_{\ell}$,
\begin{eqnarray*} 
 \gamma_K (m') = \left( \sum_{K' \in m' }  F_K (K') \right)  -   \pen (m'  \vee K).
\end{eqnarray*}
We shall find for each cube $K \in \cup_{m \in \M_{\ell}} m$, a partition $m'_K \in \M_{\ell}$ such that
\begin{eqnarray} \label{relationgamma}
  \gamma_K (m'_K) = \sup_{m' \in \M_{\ell}} \gamma_K (m').
  \end{eqnarray}
We shall compute then
\begin{eqnarray} \label{relationgamma2}
\min_{m \in \M_{\ell} }  \gamma(m) = \min_{m \in \M_{\ell} } \left\{ \left(\sum_{K \in m} \gamma_K (m_K') \right) + 2 \pen(m)\right\}.
\end{eqnarray}
We shall find $m'_K$ by using a slight adaptation of the procedure of~\cite{Blanchard2004}. Computing~{(\ref{relationgamma2})} is similar. The algorithm we propose is based on the one-to-one correspondence between $\M_{\ell}$ and the set $\mathcal{T}_{\ell}$ of $4^d$-ary trees with depth smaller than $\ell$. 
\begin{lemme} \label{LemmeLienArbreEtPartition}
There exists $\psi_{\ell}$ a one-to-one map between $\M_{\ell}$ and $\mathcal{T}_{\ell}$ such that for all $m \in \M_{\ell}$, $\psi_{\ell} (m)$ is a tree whose leaves  correspond to the elements of the partition~$m$.
\end{lemme}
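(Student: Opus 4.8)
The plan is to construct $\psi_\ell$ explicitly by recursion on the construction algorithm of Section~\ref{SectionChoixM}, exploiting the fact that every partition $m \in \M_\ell$ is obtained from $[0,1]^{2d}$ by a finite sequence of splitting decisions, and that each split of a cube $K_{j,\mathbf{l}}$ replaces it by the $4^d$ subcubes of $\mathcal{K}_{j+1}$ contained in it. First I would fix a canonical labelling of these $4^d$ children (say by the lexicographic order on the index vectors $\mathbf{l}' \in \mathcal{L}_{j+1}$ with $K_{j+1,\mathbf{l}'} \subset K_{j,\mathbf{l}}$), so that a $4^d$-ary tree of depth $\le \ell$ — i.e.\ an element of $\mathcal{T}_\ell$ — unambiguously encodes which cubes are split and in what nested pattern. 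Concretely, to a tree $t \in \mathcal{T}_\ell$ I associate the cube-labelling in which the root carries $[0,1]^{2d}$, and if an internal node carries $K_{j,\mathbf{l}}$ then its $k$-th child carries the $k$-th subcube of $K_{j,\mathbf{l}}$ in the canonical order; the leaves of $t$ then carry a family of pairwise disjoint cubes whose union is $[0,1]^{2d}$, hence a partition. This defines a map $\mathcal{T}_\ell \to \M_\ell$, and I would take $\psi_\ell$ to be its inverse.

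The key steps are then: (i) check that the cubes attached to the leaves of any $t \in \mathcal{T}_\ell$ do form a partition of $[0,1]^{2d}$ — this follows by an immediate induction on the depth, using that the $4^d$ canonical children of a cube $K_{j,\mathbf{l}}$ partition $K_{j,\mathbf{l}}$ (a consequence of the definitions of $I_j(l_i)$ and $K_{j,\mathbf{l}}$), together with the observation that the subdivision in $\mathcal{K}_{j+1}$ refines the one in $\mathcal{K}_j$; (ii) check that the resulting partition lies in $\M_\ell$, which holds because the depth of $t$ is at most $\ell$ and the number of splitting steps needed to realise $m$ is bounded by the number of internal nodes of $t$, itself controlled by the depth; (iii) check surjectivity, i.e.\ that every $m \in \M_\ell$ arises this way: given the history of splits producing $m$ in at most $\ell$ steps, one reads off the corresponding tree node by node, the nesting of the splits giving exactly the parent–child structure; (iv) check injectivity, i.e.\ that two distinct trees give distinct partitions — here one notes that from a partition $m$ one can recover the split history uniquely (a cube $K$ of $m$ was split if and only if $K \notin m$ but $K$ appears as some $K_{j,\mathbf{l}}$ refined by elements of $m$), so the map $\mathcal{T}_\ell \to \M_\ell$ is a bijection. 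Finally, by construction the leaves of $\psi_\ell(m)$ are labelled precisely by the elements of $m$, which is the asserted property.

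I do not expect a serious obstacle: the statement is essentially a bookkeeping identification of the recursive partition family with the standard $4^d$-ary tree model, and all four verifications above are short inductions on the depth. The only mildly delicate point is making the canonical child-ordering precise enough that the tree-to-partition assignment is genuinely well defined and that injectivity is transparent; once the ordering is fixed, the inverse map $\psi_\ell$ and the leaf-labelling property are read off directly.
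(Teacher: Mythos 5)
The paper does not actually prove this lemma; it only refers the reader to Section~3.2.4 of \cite{BaraudBirgeHistogramme} for the construction of $\psi_\ell$. Your proposal supplies the missing construction, and it is the standard one underlying that reference: label the root with $[0,1]^{2d}$, order the $4^d$ dyadic children of each cube lexicographically, let the leaves of a tree $t \in \mathcal{T}_\ell$ carry the cubes obtained by following that labelling, and observe that this leaf set is a partition in $\M_\ell$; surjectivity and injectivity follow by reading off, from a partition $m$, which dyadic cubes $K_{j,\mathbf{l}}$ contain an element of $m$ (these are the tree nodes) and which equal an element of $m$ (these are the leaves). This is exactly the right picture.

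One small inaccuracy in step (ii): you argue that the number of splitting steps needed to realise $m$ is bounded by the number of internal nodes of $t$, ``itself controlled by the depth.'' The number of internal nodes is \emph{not} controlled by the depth (it can be of order $4^{d\ell}$), so that intermediate bound is off. The correct and simpler observation is that at each step the algorithm may split any subset of the current cubes, so a tree of depth $k$ can be realised in exactly $k$ steps (at step $j$, split all internal nodes at level $j-1$); hence depth $\le \ell$ gives $m \in \M_\ell$ directly. With that repaired, the argument is complete and matches the intended construction.
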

The construction of this map may for instance be deduced from Section 3.2.4 of~\cite{BaraudBirgeHistogramme}.

We need to introduce some notations. For each tree $T \in \mathcal{T}_{\ell}$ and bin $K''$ of $T$, we denote by $T(K'')$ the subtree of $T$ rooted in~$K''$. The set of leaves of $T (K'')$ is denoted by $\mathcal{L} (T (K''))$. We set~$R(K'')$ the tree reduced to its root $K''$ (\emph{i.e},  $\mathcal{L} (R (K'')) = \{K''\}$).
For all cube $K \in \cup_{m \in \M_{\ell}} m$, we  set  $$\mathcal{L} (T (K'')) \vee K = \left\{K' \cap K , \, K' \in \mathcal{L} (T (K'')), \, K' \cap K \neq \emptyset \right\}$$
and we define the function~$\mathcal{E}$ by
$$\mathcal{E} (T (K'')) = - |\mathcal{L} (T (K'')) \vee K| + \sum_{K' \in \mathcal{L} (T (K'')) } F_K (K').$$
The key point is that computing~{(\ref{relationgamma})} amounts to finding $T^{\star}$ such that $$\mathcal{E} (T^{\star} ([0,1]^{2 d})) = \sup_{T \in \mathcal{T}_{\ell}} \mathcal{E} (T ([0,1]^{2 d}))$$
since $m'_K = \psi^{-1}_{\ell} (T^{\star}).$ 

We now take advantage of the additivity of the function $\mathcal{E}$: if $T (K'')$ is not reduced to its root, and if $K''_1,\dots,K''_{4^d}$ are the cubes of $\cup_{m \in \M_{\ell}} m$ such that $K'' = \cup_{i=1}^{4^d} K''_i$, then,
\begin{eqnarray} \label{RelAdditionFonctionE}
\mathcal{E} (T (K'')) = \sum_{i=1}^{4^d} \mathcal{E} (T(K''_i)).
\end{eqnarray}
For all cube $K'' \in \cup_{m \in \M_{\ell}} m$, let $T^{\star} (K'')$ be a tree (rooted in $K''$) such that
$$\mathcal{E} (T^{\star} (K''))  = \sup_{T \in \mathcal{T}_{\ell},\, T \ni K''} \mathcal{E} (T (K'')).$$
Remark that if $K'' \cap K = \emptyset$, this supremum is equal to $0$, in which case $T^{\star} (K'')$  will  always stand for $R(K'')$. In general, we deduce from~{(\ref{RelAdditionFonctionE})} that
\begin{eqnarray} \label{RelationSomme}
\mathcal{E} (T^{\star} (K'')) = \max \left( \mathcal{E} (R (K'')), \sum_{i=1}^{4^d} \mathcal{E} (T^{\star} (K''_i))  \right).
\end{eqnarray}
Calculating~{(\ref{relationgamma})} can  thus be completed in that way: we start with the sets $K'' \in \cup_{m \in \M_{\ell} \setminus \M_{\ell-1}} m$ with $K'' \cap K \neq \emptyset$ for which the optimal local trees are reduced to their roots.
 By using relation~{(\ref{RelationSomme})} we  find the optimal local trees $T^{\star} (K'')$ when $K'' \in \cup_{\M_{\ell-1} \setminus \M_{\ell}} m$,  $K'' \cap K \neq \emptyset$. Proceeding recursively like this  yields to the optimal tree $T^{\star} = T^{\star} ([0,1]^{2 d})$.

\section{Proofs}

\subsection{Proof of Proposition~\ref{prophisto}.}
Let us introduce the piecewise constant function
\begin{eqnarray} \label{RelationEstimateurSBarrem}
\bar{s}_{m} = \sum_{K \in m} \frac{\sum_{i=0}^{n-1} \E \left[\1_K (X_i,X_{i+1}) \mid X_i \right]}{\sum_{i=0}^{n-1} \int_{\XX} \1_K (X_i,x) \d \mu (x)} \1_K.
\end{eqnarray}
By using the triangular inequality we can decompose the risk of $\hat{s}_{m}$ as follows:
$$ \E \left[H^2(s \1_A, \hat{s}_{m})\right]  \leq \left(1 + \frac{2 + \log 2}{2} \right) \E\left[H^2(s\1_A, \bar{s}_{m})  \right]+ \left(1 + \frac{2}{2 + \log 2} \right) \E \left[H^2(\bar{s}_{m}, \hat{s}_{m}) \right].$$
The first term can be bounded from above by $(4 + \log 2 )\, \E \left[H^2 (s\1_A, V_{m}) \right]$ thanks to Lemma~2 of~\cite{BaraudBirgeHistogramme}. For the second term, we begin to define for $K \in m$  the random variable  $$B_K =  \left( \sqrt{\sum_{i=0}^{n-1} \1_{K} (X_i,X_{i+1}) } -  \sqrt{ \sum_{i=0}^{n-1} \E \left[\1_{K} (X_i,X_{i+1}) \mid X_i \right] } \right)^2.$$
Since $2 n H^2 (\hat{s}_{m}, \bar{s}_{m})  = \sum_{K \in m} B_K$, we shall bound from above the terms $\E[B_K]$.
For this purpose, we introduce the stopping time
$$T = \inf \left\{i \geq 0, \; \E \left[\1_{K} (X_{i}, X_{i+1} ) \mid X_{i} \right] \geq \frac{1}{2 n} \right\} \wedge (n-1) $$
with respect to the filtration $\mathcal{F}_n = \sigma(X_0,\dots,X_n)$ generated by the random variables $X_0,\dots,X_n$. We set $\varepsilon = 1+ \log 2 +2 \log n $  and  we decompose $\E[B_K ]$ as follows 
\begin{eqnarray} \label{eqPreuveHistogramme}
\E \left[B_K \right] &\leq& \left(1 + \varepsilon \right) \E \left[ \left( \sqrt{\sum_{i=0}^{T-1} \1_{K} (X_i,X_{i+1}) } -  \sqrt{ \sum_{i=0}^{T-1} \E \left[\1_{K} (X_i,X_{i+1}) \mid X_i \right] } \right)^2 \right] \nonumber \\
& & \quad + \left(1 + \varepsilon^{-1} \right) \E \left[ \left( \sqrt{\sum_{i=T}^{n-1} \1_{K} (X_i,X_{i+1}) } -  \sqrt{ \sum_{i=T}^{n-1} \E \left[\1_{K} (X_i,X_{i+1}) \mid X_i \right] } \right)^2 \right] \nonumber \\
 &\leq& 2 \left(1+\varepsilon\right)  \E \left[\sum_{i=0}^{T-1}  \E \left[\1_{K} (X_i,X_{i+1}) \mid X_i \right]  \right] \nonumber \\
& & \quad + \left(1+\varepsilon^{-1}\right) \E \left[\frac{ \left(\sum_{i=T}^{n-1} \left(\1_{K} (X_i,X_{i+1}) - \E \left[\1_{K} (X_i,X_{i+1}) \mid X_i \right] \right) \right)^2}{\sum_{i=T}^{n-1} \E \left[\1_{K} (X_i,X_{i+1}) \mid X_i \right] }  \right].  
\end{eqnarray}
Yet,
$$\E \left[\sum_{i=0}^{T-1}  \E \left[\1_{K} (X_i,X_{i+1}) \mid X_i \right]  \right] \leq 1/2, $$
and we control the second term of the right-hand side of inequality~{(\ref{eqPreuveHistogramme})}, by using the claims below.

\begin{Claim} \label{Claim1}
For all $K \in m$, $j  \in \{0,\dots,n\}$, and  $\A' \in \mathcal{F}_j = \sigma(X_0,\dots,X_j)$,
\begin{eqnarray*}
\E \left[\frac{ \left(\sum_{i=j}^{n-1} \left(\1_{K} (X_i,X_{i+1}) - \E \left[\1_{K} (X_i,X_{i+1}) \mid X_i \right] \right) \right)^2}{\sum_{i=j}^{n-1} \E \left[\1_{K} (X_i,X_{i+1}) \mid X_i \right] } \1_{\A'} \right] \leq \sum_{k=j}^{n-1} \E \left[ \frac{\E \left[\1_{K} (X_k,X_{k+1}) \mid X_k \right]}{\sum_{i=j}^{k} \E \left[\1_{K} (X_i,X_{i+1}) \mid X_i \right]} \1_{\A'}   \right].
\end{eqnarray*}
\end{Claim}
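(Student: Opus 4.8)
The plan is to recognise the left-hand side as $\E[M^2/V]$ for a martingale $M$ with a compensator-like normalisation $V$, and to bound it by a telescoping argument. Fix $j$ and $\A' \in \mathcal{F}_j$. Write $Z_i = \1_K(X_i,X_{i+1})$ and $p_i = \E[\1_K(X_i,X_{i+1}) \mid X_i]$; by the Markov property $p_i = \E[Z_i \mid \mathcal{F}_i]$, so $p_i$ is $\mathcal{F}_i$-measurable and $Z_i - p_i$ is a martingale increment for the filtration $(\mathcal{F}_{i+1})_i$. Put $S_k = \sum_{i=j}^{k}(Z_i - p_i)$ and $W_k = \sum_{i=j}^{k} p_i$ for $k \geq j$, with the conventions $S_{j-1} = W_{j-1} = 0$ and $0/0 = 0$; the statement to prove is then $\E[(S_{n-1}^2/W_{n-1})\1_{\A'}] \leq \sum_{k=j}^{n-1}\E[(p_k/W_k)\1_{\A'}]$.

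First I would record the elementary but load-bearing fact that on $\{W_{k-1}=0\}$ one has $S_{k-1}=0$ almost surely: since $\{p_i=0\}\in\mathcal{F}_i$ and $\E[Z_i\1_{\{p_i=0\}}] = \E[p_i\1_{\{p_i=0\}}] = 0$ with $Z_i\geq 0$, we get $Z_i = 0$ a.s.\ on $\{p_i=0\}$, hence $S_{k-1}=0$ a.s.\ on $\{W_{k-1}=0\}$. This makes the $0/0$ conventions harmless. Next I would telescope: from $S_{j-1}=0$,
$$\frac{S_{n-1}^2}{W_{n-1}} = \sum_{k=j}^{n-1}\left(\frac{S_k^2}{W_k} - \frac{S_{k-1}^2}{W_{k-1}}\right),$$
and, since $k\mapsto W_k$ is nondecreasing (so $1/W_k \leq 1/W_{k-1}$ when $W_{k-1}>0$, the previous remark handling the case $W_{k-1}=0$), each summand is bounded above by
$$\frac{S_k^2 - S_{k-1}^2}{W_k} = \frac{2S_{k-1}(Z_k - p_k) + (Z_k - p_k)^2}{W_k}.$$

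It then remains to multiply by $\1_{\A'}$, take expectations, and treat the two resulting sums by conditioning on $\mathcal{F}_k$, noting that $S_{k-1}$, $W_k$ and $\1_{\A'}$ are all $\mathcal{F}_k$-measurable (here $\A'\in\mathcal{F}_j\subset\mathcal{F}_k$). The cross term vanishes because $\E[Z_k - p_k\mid\mathcal{F}_k] = 0$ on $\{W_k>0\}$ while the term is identically $0$ on $\{W_k=0\}$. For the square term, $Z_k\in\{0,1\}$ gives $\E[(Z_k-p_k)^2\mid\mathcal{F}_k] = p_k - p_k^2 \leq p_k$, hence $\E[((Z_k-p_k)^2/W_k)\1_{\A'}] \leq \E[(p_k/W_k)\1_{\A'}]$; summing over $k$ produces exactly the claimed inequality. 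I expect the only genuine obstacle to be the bookkeeping around the random denominators $W_k$ that may vanish: one has to check, at the telescoping step, at the martingale cancellation, and at the final comparison, that the convention $0/0=0$ is compatible with each inequality — which is precisely what the preliminary observation secures. Everything else is the classical computation bounding $\E[M_{n-1}^2/\langle M\rangle_{n-1}]$ for a martingale with $\{0,1\}$-valued increments.
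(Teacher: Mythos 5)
Your proof is correct and is essentially the same argument as the paper's, just packaged differently. The paper establishes the one-step conditional recursion $\E[Z_{k+1}(K)\mid\mathcal F_k] \le Z_k(K) + p_k/W_k$ (using martingale cross-term cancellation, $\mathrm{var}(\1_K\mid X_k)\le p_k$, and monotonicity of the denominator), multiplies by $\1_{\A'}$, and inducts; you telescope $S_{n-1}^2/W_{n-1}=\sum_k\bigl(S_k^2/W_k - S_{k-1}^2/W_{k-1}\bigr)$ first and then condition on $\mathcal F_k$, invoking exactly the same three ingredients. Your preliminary remark that $S_{k-1}=0$ a.s.\ on $\{W_{k-1}=0\}$ makes the $0/0$ convention rigorous, a point the paper leaves implicit.
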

\begin{proof} [Proof of Claim~\ref{Claim1}.]
Let us define the random variables
$$Y_{n-1} (K) = \sum_{i=j}^{n-1} \left(\1_{K} (X_i,X_{i+1}) - \E \left[\1_{K} (X_i,X_{i+1}) \mid X_i \right] \right) \; \text{and} \; Z_n (K) = \frac{Y_{n-1}^2 (K) }{\sum_{i=j}^{n-1} \E \left[\1_{K} (X_i,X_{i+1}) \mid X_i \right]}. $$
We have
\begin{eqnarray*}
\E \left[Z_{n+1} (K) \mid \mathcal{F}_n \right] &=& \frac{\E \left(\left[Y_{n-1}(K) + \left(\1_{K} (X_n,X_{n+1}) - \E \left[\1_{K} (X_n,X_{n+1}) \mid X_n \right] \right) \right]^2 \mid \mathcal{F}_n \right) }{\sum_{i=j}^{n} \E \left[\1_{K} (X_i,X_{i+1}) \mid X_i \right]} \\
&=& \frac{Y_{n-1}^2(K) + \var \left( \1_{K} (X_n,X_{n+1}) \mid X_n \right) }{\sum_{i=j}^{n} \E \left[\1_{K} (X_i,X_{i+1}) \mid X_i \right]} \\
&\leq& Z_n (K) +  \frac{\E \left[\1_{K} (X_n,X_{n+1}) \mid X_n \right]}{\sum_{i=j}^{n} \E \left[\1_{K} (X_i,X_{i+1}) \mid X_i \right]}.
\end{eqnarray*}
Thus, since $\A'$ is also $\mathcal{F}_{n}$-measurable,
\begin{eqnarray*}
\E \left[Z_{n+1} (K) \1_{\A'}  \right]  \leq \E \left[Z_n (K)  \1_{\A'} \right] +  \E \left[\frac{\E \left[\1_{K} (X_n,X_{n+1}) \mid X_n \right]}{\sum_{i=j}^{n} \E \left[\1_{K} (X_i,X_{i+1}) \mid X_i \right]} \1_{\A'} \right].
\end{eqnarray*}
The result ensues  from induction.
\end{proof}

\begin{Claim} \label{Claim2}
For all sequence $(u_n)_{n \geq 0}$ in $[0,1]$, and $j \geq 0$ such that $u_j \neq 0$, 
$$ \sum_{k=j}^{n-1} \frac{u_k}{\sum_{i=j}^{k} u_i} \leq 1 + \log n  - \log u_j.$$
\end{Claim}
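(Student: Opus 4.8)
\textbf{Proof proposal for Claim~\ref{Claim2}.}
The plan is elementary: introduce the partial sums $S_k = \sum_{i=j}^{k} u_i$ for $k \geq j$, so that the term indexed by $k$ in the sum is $u_k / S_k$. Since $u_j \neq 0$ and the $u_i$ are non-negative, the sequence $(S_k)_{k \geq j}$ is non-decreasing and bounded below by $S_j = u_j > 0$; in particular every $S_k$ is strictly positive and no division-by-zero issue arises. I would first isolate the term $k = j$, which equals $u_j / S_j = 1$.

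For $k \geq j+1$ I would write
$$\frac{u_k}{S_k} = \frac{S_k - S_{k-1}}{S_k} = 1 - \frac{S_{k-1}}{S_k},$$
and then invoke the inequality $1 - x \leq -\log x$, valid for all $x > 0$ (equivalently $\log x \leq x - 1$), applied to $x = S_{k-1}/S_k \in (0,1]$. This gives $u_k / S_k \leq \log S_k - \log S_{k-1}$ for each $k \geq j+1$ (the case $u_k = 0$ being trivial, both sides vanishing). Summing over $k$ from $j+1$ to $n-1$, the right-hand side telescopes to $\log S_{n-1} - \log S_j = \log S_{n-1} - \log u_j$.

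Adding back the term $k = j$ yields
$$\sum_{k=j}^{n-1} \frac{u_k}{\sum_{i=j}^{k} u_i} \leq 1 + \log S_{n-1} - \log u_j.$$
It remains to observe that $S_{n-1} = \sum_{i=j}^{n-1} u_i$ is a sum of at most $n$ terms, each in $[0,1]$, hence $S_{n-1} \leq n$ and $\log S_{n-1} \leq \log n$. Combining the two displays gives the claimed bound. (If $n-1 < j$ the left-hand side is an empty sum and the inequality is immediate since $\log u_j \leq 0 \leq 1 + \log n$.) There is no real obstacle here; the only points requiring a word of care are the positivity of the $S_k$, which follows from $u_j > 0$, and the crude but sufficient bound $S_{n-1} \leq n$.
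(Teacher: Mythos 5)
Your proof is correct. It takes a genuinely different route from the paper's: you work entirely in the discrete setting, writing the partial sums $S_k$, observing the telescoping identity $u_k/S_k = 1 - S_{k-1}/S_k$, and invoking the elementary inequality $1 - x \leq -\log x$. The paper instead constructs a non-negative continuous function $f$ whose integral over each $[k,k+1]$ reproduces $u_k$, takes its primitive $F$ with $F(j)=0$, and bounds the sum by $\int f/F = \int (\log F)'$, a classical sum-versus-integral comparison. Both routes hinge on the same underlying fact (the derivative of $\log$), and both give the slightly sharper intermediate bound $1 + \log S_{n-1} - \log u_j$ before the crude estimate $S_{n-1} \leq n$; your version has the modest advantage of avoiding the construction of the auxiliary function $f$ and is arguably the more self-contained of the two.
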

\begin{proof} [Proof of Claim~\ref{Claim2}.]
Let $f$ be any non-negative continuous function such that $u_k = \int_{k}^{k+1} f(t) \d t$ whatever $k \in \N$. Let $F$ be the primitive of $f$ such that $F(j) = 0$. Then,
\begin{eqnarray*}
\sum_{k=j}^{n-1} \frac{u_k}{\sum_{i=j}^{k} u_i} &\leq& 1 + \sum_{k=j+1}^{n-1}  \int_{k}^{k+1} \frac{f(t)}{F(k+1)} \d t \\
&\leq& 1 + \sum_{k=j+1}^{n-1}  \int_{k}^{k+1} \frac{f(t)}{F(t)} \d t \\
&\leq& 1 + \log F(n) - \log F(j+1) \\
&\leq& 1 + \log \left(\sum_{k=j}^{n-1} u_k \right) - \log u_j.
\end{eqnarray*}
\end{proof}

By using Claim~\ref{Claim1} with $\A' = [T = j]$,
\begin{eqnarray*}
& & \!\!\!\!\!\!\!\!\!\! \!\! \!\! \!\! \!\!  \E \left[\frac{ \left(\sum_{i=T}^{n-1} \left(\1_{K} (X_i,X_{i+1}) - \E \left[\1_{K} (X_i,X_{i+1}) \mid X_i \right] \right) \right)^2}{\sum_{i=T}^{n-1} \E \left[\1_{K} (X_i,X_{i+1}) \mid X_i \right] }   \right] 
\leq  \sum_{j=0}^{n-2}   \E \left( \sum_{k=j}^{n-1} \frac{\E \left[\1_{K} (X_k,X_{k+1}) \mid X_k \right]}{\sum_{i=j}^{k} \E \left[\1_{K} (X_i,X_{i+1}) \mid X_i \right]}   \1_{T = j} \right)  \\
& & \qquad \qquad  \qquad \qquad \qquad \qquad\qquad  + \E \left[\frac{ \left( \1_{K} (X_{n-1},X_n) - \E \left[\1_{K} (X_{n-1},X_{n}) \mid X_{n-1} \right]  \right)^2}{\E \left[\1_{K} (X_{n-1},X_{n}) \mid X_{n-1} \right] } \1_{T = n -1 }  \right].
\end{eqnarray*}
Now,
\begin{eqnarray*}
\E \left[\frac{ \left( \1_{K} (X_{n-1},X_n) - \E \left[\1_{K} (X_{n-1},X_{n}) \mid X_{n-1} \right]  \right)^2}{\E \left[\1_{K} (X_{n-1},X_{n}) \mid X_{n-1} \right] } \1_{T = n -1 }  \right] \!\!\!\!\! &=& \!\!\!\!\!  \E \left[\frac{  \var \left( \1_{K} (X_{n-1},X_n) \mid X_{n-1}   \right)}{\E \left[\1_{K} (X_{n-1},X_{n}) \mid X_{n-1} \right] } \1_{T = n -1 } \right] \\
&\leq& \P(T = n -1) .
\end{eqnarray*}
We then use Claim~\ref{Claim2} with $u_k =  \E \left[\1_{K} (X_k,X_{k+1}) \mid X_k \right]$ to derive 
\begin{eqnarray*}
 \sum_{j=0}^{n-2}   \E \left( \sum_{k=j}^{n-1} \frac{\E \left[\1_{K} (X_k,X_{k+1}) \mid X_k \right]}{\sum_{i=j}^{k} \E \left[\1_{K} (X_i,X_{i+1}) \mid X_i \right]}   \1_{T = j} \right)   &\leq& \sum_{j=0}^{n-2} \E \left[ (1 + \log 2 + 2 \log n ) \1_{T = j} \right] \\
 &\leq& \left(1 + \log 2 + 2 \log n \right) \P(T \neq n -1).
\end{eqnarray*}
Finally,
$\E\left[B_K \right] \leq 4 + 2 \log 2 +4 \log n$ and hence $$\E \left[H^2(\bar{s}_{m}, \hat{s}_{m}) \right] \leq \frac{2 + \log 2 + 2\log n}{ n} |m| ,$$
which  concludes the proof.
\qed

\subsection{Proof of Theorem~\ref{CorTheoremSelectionHisto}.} \label{SectionProofTheorem1} 
When $\ell \leq n$, the result ensues from the following theorem whose proof  is delayed to Section~\ref{SectionProofTheorem2}. In the theorem below, the constant $L_0 = 90$ can  easily be  improved but it seems to be difficult to obtain the value $L_0 = 0.03$ used in practice.
\begin{thm} \label{ThmSelectPractique2}
For all $L \geq 90$ and $1 \leq \ell \leq n$, the estimator  $\hat{s} = \hat{s} (L,\ell)$  satisfies 
 $$\forall \xi  > 0, \quad \P \left[ C H^2 \left(s \1_{A}, \hat{s}  \right) \geq  \inf_{m \in \M_{\ell}} \left(H^2 \left(s  \1_{A}, \hat{s}_m \right) + \pen(m) \right) + \xi \right] \leq 3 e^{-n \xi}$$
where $C$ is an universal positive constant. 
\end{thm}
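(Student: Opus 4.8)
The plan is to run a testing-based model-selection argument around the inequality $\gamma(\hat m)\le\inf_{m\in\M_\ell}\gamma(m)+1/n$ defining $\hat m$, after first putting $\gamma$ in a workable form. For fixed $m$ and $K\in m$, the bracket $\sum_{K'\in m'}(\alpha H^2+T)(\hat s_m\1_{K\cap K'},\hat s_{m'}\1_{K\cap K'})-\pen(m'\vee K)$ depends on $m'$ only through $m'\vee K$, which ranges exactly over the dyadic partitions of $K$ of depth $\le\ell$; assembling such traces over $K\in m$ and using the additivity of $H^2$ and of the functional $T$ of~(\ref{TestBaraud}) over a partition of $[0,1]^{2d}$ (with the convention $0/0=0$) gives
\[
\gamma(m)=\sup_{\substack{\tilde m\in\M_\ell\\ \tilde m\succeq m}}\Bigl[\alpha H^2(\hat s_m,\hat s_{\tilde m})+T(\hat s_m,\hat s_{\tilde m})-\pen(\tilde m)\Bigr]+2\pen(m),
\]
where $\tilde m\succeq m$ means that $\tilde m$ refines $m$, $\pen(\tilde m)=L|\tilde m|\log n/n$, and $\hat s_{\tilde m}$ is the histogram on $\tilde m$. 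I then split $T=\widetilde T+W$, where $\widetilde T$ is obtained from $T$ by replacing $\frac{\sqrt{f'}-\sqrt f}{\sqrt{f+f'}}(X_i,X_{i+1})$ by $\int_\XX\frac{\sqrt{f'}-\sqrt f}{\sqrt{f+f'}}(X_i,y)\,s(X_i,y)\,\d\mu(y)$ in the middle sum of~(\ref{TestBaraud}), so that $\widetilde T(f,f')$ is a function of $X_0,\dots,X_{n-1}$ only, while for fixed $f,f'$ the quantity $W(f,f')$ is a sum of martingale increments bounded by $\sqrt2/n$; all of $T,\widetilde T,W$ are antisymmetric in $(f,f')$ and vanish on the diagonal.

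The proof then rests on three ingredients. First, an algebraic lemma: there are constants $C_a\ge\alpha$ and $C_b>0$ with
\[
C_b\,H^2(s\1_A,f)-C_a\,H^2(s\1_A,f')\ \le\ \alpha H^2(f,f')+\widetilde T(f,f')\ \le\ C_a\,H^2(s\1_A,f)-C_b\,H^2(s\1_A,f')
\]
for all non-negative integrable $f,f'$ supported in $A$. Both inequalities are pointwise-in-$X_i$ statements about the densities $s(X_i,\cdot),f(X_i,\cdot),f'(X_i,\cdot)$, the first being a probability density in its second variable; they are the variational inequality for the Hellinger affinity of~\cite{BaraudMesure}, the mass-correction term $\frac1{2n}\sum_i\int(f-f')\d\mu$ of~(\ref{TestBaraud}) being present exactly because $f(X_i,\cdot),f'(X_i,\cdot)$ need not integrate to one, and the value $\alpha=(1-1/\sqrt2)/2$ together with the $\sqrt2$-weights being what forces $C_b>0$ (one inequality follows from the other by antisymmetry). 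Second, a uniform bound on $W$: since $(\sqrt{f'}-\sqrt f)^2\le f+f'$ pointwise and, after a short computation, $(\sqrt{f'}-\sqrt f)^2\,s/(f+f')\le(\sqrt s-\sqrt f)^2+(\sqrt s-\sqrt{f'})^2$, the predictable quadratic variation of $W(\hat s_m,\hat s_{m'})$ is at most $\frac1n\bigl(H^2(s\1_A,\hat s_m)+H^2(s\1_A,\hat s_{m'})\bigr)$, so Freedman's inequality yields exponential tails; these are made uniform over $(m,m')\in\M_\ell^2$ and over the data-dependence of $\hat s_m,\hat s_{m'}$ by replacing each $\hat s_m$ by the nearest point of a fixed net of cardinality $\le(Cn)^{|m|}$ (Lemma~5 of~\cite{BirgeTEstimateurs}) and by the Catalan-type count $\#\{m\in\M_\ell:|m|=D\}\le\kappa(d)^D$, the $\log n$ in the penalty dominating $\log\bigl(\kappa(d)^D(Cn)^D\bigr)$ once $L\ge L_0$. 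The outcome is that, on an event of probability at least $1-3e^{-n\xi}$, for all $m,m'\in\M_\ell$,
\[
\bigl|W(\hat s_m,\hat s_{m'})\bigr|\ \le\ \varepsilon\bigl(H^2(s\1_A,\hat s_m)+H^2(s\1_A,\hat s_{m'})\bigr)+\tfrac12\bigl(\pen(m)+\pen(m')\bigr)+C\xi
\]
for a small $\varepsilon$ of our choice (using $H^2(s\1_A,\cdot)\le C$ and $\pen(\cdot)\ge L\log n/n\ge1/n$ to pass from $e^{-cnt}$-tails to this slicing form). Third, a pathwise, uniform-over-$m\in\M_\ell$ version of Proposition~\ref{prophisto} (obtained by rerunning its martingale/stopping-time argument and union bounding), of the form $H^2(s\1_A,\hat s_m)\le C\,H^2(s\1_A,V_m)+C|m|\log n/n+C\xi$ on the same event.

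The combination is then mechanical. Plugging the new form of $\gamma$ and the first two ingredients into $\gamma(m_0)$ gives $\gamma(m_0)\le C\,H^2(s\1_A,\hat s_{m_0})+C\pen(m_0)+C\xi$ on the good event, the term $-C_bH^2(s\1_A,\hat s_{\tilde m})$ killing the supremum over $\tilde m$; in particular (taking $\tilde m=\hat m$ in the supremum, so $\gamma(\hat m)\ge\pen(\hat m)$) together with $\gamma(\hat m)\le\gamma(m_0)+1/n$ this bounds $\pen(\hat m)$ by $C\bigl(H^2(s\1_A,\hat s_{m_0})+\pen(m_0)\bigr)+C\xi$. For the lower bound, taking $\tilde m=m_0\vee\hat m$ (which refines $\hat m$, hence is admissible) in the supremum defining $\gamma(\hat m)$ and using the left inequality of the algebraic lemma and the bound on $W$ yields $\gamma(\hat m)\ge C_bH^2(s\1_A,\hat s_{\hat m})-C_aH^2(s\1_A,\hat s_{m_0\vee\hat m})-\tfrac32\pen(m_0\vee\hat m)+2\pen(\hat m)-C\xi$; the third ingredient applied to $m_0\vee\hat m$, together with $\hat s_{m_0}\in V_{m_0}\subseteq V_{m_0\vee\hat m}$ (so $H^2(s\1_A,V_{m_0\vee\hat m})\le H^2(s\1_A,\hat s_{m_0})$) and $|m_0\vee\hat m|\le|m_0|+|\hat m|$, bounds the auxiliary term by $C\,H^2(s\1_A,\hat s_{m_0})+\tfrac CL\bigl(\pen(m_0)+\pen(\hat m)\bigr)+C\xi$, so that for $L$ large the factor $2$ in front of $\pen(\hat m)$ absorbs every penalty term carrying $\hat m$. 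Feeding $\gamma(\hat m)\le\gamma(m_0)+1/n$, the bound on $\gamma(m_0)$ and the bound on $\pen(\hat m)$ into this, choosing $\varepsilon$ small enough that $C_b-\varepsilon>0$, and using $1/n\le\pen(m_0)$, one is left with $c\,H^2(s\1_A,\hat s_{\hat m})\le C\bigl(H^2(s\1_A,\hat s_{m_0})+\pen(m_0)\bigr)+C\xi$ for every $m_0\in\M_\ell$; taking the infimum over $m_0$ and rescaling the constant multiplying $H^2$ and the free parameter $\xi$ gives the stated inequality.

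The hard part is twofold. On one hand, the algebraic lemma: one must verify that the specific $T$ of~(\ref{TestBaraud})---one integral against $\mu$ in the $y$-variable, one evaluation at $X_{i+1}\sim s(X_i,\cdot)$, and one first-order mass correction---is arranged so that $\alpha H^2+\widetilde T$ genuinely carries a strictly negative coefficient on $H^2(s\1_A,f')$; this is where the exact value of $\alpha$ and of the $\sqrt2$'s enters and where the only really computational step lies, and it is responsible for the very rough bound obtained on $L_0$. On the other hand, the uniform control: $\gamma(m)$ conceals a supremum over the exponentially many partitions refining $m$, and the argument closes only because the penalty carries a $\log n$ factor, needed both to beat the $\kappa(d)^D$ count of dyadic partitions with $D$ cells and to pay for the $(Cn)^{|m|}$ cost of discretising the data-dependent $\hat s_m$; propagating all the constants through the coupled lower bound on $\gamma(\hat m)$ (via the common refinement $m_0\vee\hat m$, which is what drags in the third ingredient) and upper bound on $\gamma(m_0)$ is where the bookkeeping is heaviest.
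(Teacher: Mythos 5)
Your high-level plan is in the right spirit---a Baraud-style variational inequality for the Hellinger affinity, a Freedman/Bernstein martingale bound for the stochastic part of the test statistic, a union bound paid for by the $\log n$ in the penalty, assembled via the defining inequality $\gamma(\hat m)\le\inf_m\gamma(m)+1/n$. But the proof as written has three substantive gaps, the first of which is fatal.

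First and foremost, your claimed \emph{exact} reformulation $\gamma(m)=\sup_{\tilde m\succeq m}[\alpha H^2(\hat s_m,\hat s_{\tilde m})+T(\hat s_m,\hat s_{\tilde m})-\pen(\tilde m)]+2\pen(m)$ is false. The inner bracket does \emph{not} depend on $m'$ only through $m'\vee K$: when a cell $K'\in m'$ strictly \emph{contains} $K\in m$, the constant $\hat s_{m'}\1_{K\cap K'}=\hat s_{m'}\1_K$ is computed from counts and integrals over the whole of $K'$, which extends outside $K$. Take $m$ the partition into $4^d$ equal cubes and $m'=\{[0,1]^{2d}\}$: then $\hat s_{m'}\1_K$ is the global histogram restricted to $K$, not the histogram of any refinement of $m$. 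More generally, when the $m'_K$ optimizing the inner supremum vary with $K$, the glued function $\sum_K\hat s_{m'_K}\1_K$ is piecewise constant on $\bigcup_K(m'_K\vee K)$ but its values are \emph{not} cell averages on that partition, so it is not of the form $\hat s_{\tilde m}$. The paper does not claim an exact formula; it establishes only the two-sided estimate $\gamma_1(m)\le\gamma(m)\le\gamma_2(m)$ of Lemma~\ref{LemmeControlGamma}, the upper bound requiring the introduction of a deterministic containment set $S_m\ni\hat s_m$ (Claim~\ref{ClaimModels}). Your upper bound on $\gamma(m_0)$ relies on the false equality and does not go through as written.

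Second, for uniformity over partitions you propose to replace $\hat s_m$ by a nearby point of a fixed $\varepsilon$-net of cardinality $(Cn)^{|m|}$. This would require you to control the change in $T$ under that replacement, which is delicate because $T$ contains the ratio $(\sqrt{f'}-\sqrt f)/\sqrt{f+f'}$ evaluated at $(X_i,X_{i+1})$, a quantity that is not uniformly continuous in $(f,f')$ where $f+f'$ is small. The paper sidesteps this entirely by observing (Claim~\ref{ClaimModels}) that $\hat s_m$ takes values in a small \emph{exact} deterministic set $S_m$ (ratios $a/(b\,\mu(J))$ with integer $a,b\le n$), with $|S_m|\le(\ell n(n+1))^{|m|}$; there is then no replacement error at all. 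Your proposal would need an additional argument here that you have not supplied.

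Third, your ``third ingredient''---a pathwise, uniform-over-$m\in\M_\ell$, high-probability version of Proposition~\ref{prophisto} with $e^{-n\xi}$ tails---is both nontrivial and unnecessary. Proposition~\ref{prophisto} is an expectation bound via a stopping-time argument; upgrading it to a uniform deviation bound over the exponentially many $m\in\M_\ell$ is its own project and the paper does not do it. You need it only because you chose the common refinement $\tilde m=m_0\vee\hat m$ in the lower bound for $\gamma(\hat m)$, which drags in $H^2(s\1_A,\hat s_{m_0\vee\hat m})$. The paper avoids this by a case split on the sign of $T(\hat s_m,\hat s_{\hat m})+\pen(m)-\pen(\hat m)$: in one case $\alpha H^2(s\1_A,\hat s_{\hat m})$ is bounded directly; in the other $\alpha H^2(\hat s_m,\hat s_{\hat m})$ is bounded via $\gamma_1(\hat m)\le\gamma_2(m)+1/n$, and the triangle inequality finishes. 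This keeps the right-hand side in terms of $H^2(s\1_A,\hat s_{m_0})$ throughout, which is exactly what the theorem statement asks for (note the theorem bounds against $\hat s_m$, not $V_m$; the passage to $V_m$ is done later, in the proof of Theorem~\ref{CorTheoremSelectionHisto}, by integrating and invoking Proposition~\ref{prophisto} in expectation only). Finally, your ``algebraic lemma'' with a strictly positive $C_b$ after adding $\alpha H^2(f,f')$ on the left is tight to the point of being questionable: with the paper's inequality $\widetilde T(f,f')\le(1+1/\sqrt2)H^2(s\1_A,f)-(1-1/\sqrt2)H^2(s\1_A,f')$ and $\alpha H^2(f,f')\le 2\alpha(H^2(s\1_A,f)+H^2(s\1_A,f'))$ one only gets $C_b=0$, since $2\alpha=1-1/\sqrt2$. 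The paper instead keeps the term $(1-\varepsilon)H^2(s\1_A,f')$ on the left of the event~\eqref{RelationLemmeProprieteTest} and exploits $\alpha\le 1-\varepsilon$ with $\varepsilon=(2+3\sqrt2)/8$; that is a different and more careful arrangement of constants.
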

By integrating the inequality above, there exists  $C' > 0$ such that
$$C' \E \left[H^2 \left(s \1_{A}, \hat{s} \right) \right] \leq  \inf_{m \in \M_{\ell}} \left\{ \E  \left[H^2 \left(s \1_{A}, \hat{s}_m \right) \right] + \pen(m) \right\}$$
and the conclusion follows from  Proposition~\ref{prophisto}.

When $\ell$ is larger than $n$, we use the lemma below whose proof is postponed to Section~\ref{SectionProofLemmaProcedureStationnaire}.
\begin{lemme} \label{LemmeProcedureStationaire}
For all $L \geq 15$ and $\ell \geq n+1$, $\hat{s} (L,\ell) = \hat{s} (L,n)$ and $\hat{s} (L,\infty) = \hat{s} (L,n)$.
\end{lemme}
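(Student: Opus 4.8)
The plan is to prove that raising the maximal resolution from level $n$ to any $\ell\in\{n+1,n+2,\dots\}\cup\{\infty\}$ changes neither the minimal value of the selection criterion nor the selected estimator. Write $\gamma_\ell$ for the functional $\gamma$ of Section~\ref{SectionProcedure} built from the family $\M_\ell$, and recall that $\M_n\subseteq\M_\ell$. It suffices to establish two deterministic facts, valid for every realisation of $X_0,\dots,X_n$: \emph{(i)} for every $m\in\M_n$ and every cube $K\in m$, the inner supremum over $m'\in\M_\ell$ occurring in $\gamma_\ell(m)$ is already attained at some $m'\in\M_n$; and \emph{(ii)} every $m\in\M_\ell\setminus\M_n$ satisfying~\eqref{eqAminimiser} has the same estimator $\hat s_m$ and the same value $\gamma_\ell(m)$ as some $\widetilde m\in\M_n$. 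Indeed, \emph{(i)} gives $\gamma_\ell=\gamma_n$ on $\M_n$; combined with \emph{(ii)} it yields $\inf_{\M_\ell}\gamma_\ell=\inf_{\M_n}\gamma_n$ and shows that the estimator selected over $\M_\ell$ by~\eqref{eqAminimiser} is the one selected over $\M_n$, that is $\hat s(L,\ell)=\hat s(L,n)$. For $\ell=\infty$ one writes $\sup_{m'\in\M_\infty}[\cdots]=\sup_{\ell\geq1}\sup_{m'\in\M_\ell}[\cdots]$ and invokes~\emph{(i)} to see that this value is still attained in $\M_n$, after which the same reasoning gives $\hat s(L,\infty)=\hat s(L,n)$.

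Both \emph{(i)} and \emph{(ii)} rest on a ``useless refinement'' statement: subdividing a cube situated at level $\geq n$ never strictly increases $\gamma_\ell$, nor the quantity maximised over $m'$ in $\gamma_\ell(m)$. To prove this I would use the closed form of $\alpha H^2+T$ between non-negative piecewise constant functions: if $f$ equals the constant $a$ on a cube $Q$ and $f'$ equals the constant $b$ on $Q$, both vanishing outside $Q$, a direct computation gives
\begin{equation*}
\alpha H^2(f,f')+T(f,f')=\frac{D_Q}{2n}\Bigl(\alpha(\sqrt a-\sqrt b)^2+\frac{\sqrt{a+b}\,(\sqrt b-\sqrt a)}{\sqrt2}+(a-b)\Bigr)+\frac{N_Q\,(\sqrt b-\sqrt a)}{n\sqrt2\,\sqrt{a+b}},
\end{equation*}
with $D_Q=\sum_{i=0}^{n-1}\mu\{y:(X_i,y)\in Q\}$ and $N_Q=\sum_{i=0}^{n-1}\1_Q(X_i,X_{i+1})$. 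In the situations we face $b$ is the value $N_Q/D_Q$ of a histogram cell; substituting this and expanding for $b/a$ large, the right-hand side becomes $\frac{c\,N_Q}{2n}+O\!\bigl(\sqrt a\,\sqrt{N_Q D_Q}/n\bigr)$ for an explicit $c=\alpha+3/\sqrt2-1>0$. The counts $N_{\cdot}$ and the $\mu$-contents $\mu(Q)$ are conserved when a cube is split, while $D_Q\leq n\,2^{-jd}$ decays geometrically in the level $j$; so, once $j\geq n$, the total contribution of the children of $Q$ differs from that of $Q$ only through remainder terms, which are then absorbed by the accompanying increase of $\pen(m'\vee K)$ when $L\geq15$ — and, in the cases where splitting $Q$ leaves $\pen(m'\vee K)$ unchanged (i.e. $Q$ properly contains the ambient cube $K$), one checks directly from the displayed formula that the contribution does not increase.

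Granting this, I would finish as follows. For~\emph{(i)}: given $m\in\M_n$, $K\in m$ and any $m'\in\M_\ell$, repeatedly replace a bottom group of $4^d$ sibling cubes of $m'$ lying at level $\geq n$ by their common parent; each step does not decrease the maximised quantity, and after finitely many steps $m'\in\M_n$. For~\emph{(ii)}: apply the same coarsening to a partition $\hat m$ satisfying~\eqref{eqAminimiser}; each step does not increase $\gamma_\ell$, so a strictly decreasing step would contradict the near-minimality of $\hat m$ (using the $1/n$ slack), while a step that leaves $\gamma_\ell$ unchanged forces the two constant values involved to coincide — otherwise the relevant contribution would have strictly increased — hence does not change the estimator; iterating produces $\widetilde m\in\M_n$ with $\hat s_{\widetilde m}=\hat s_{\hat m}$ and $\gamma_\ell(\widetilde m)=\gamma_\ell(\hat m)$.

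The step I expect to be the main obstacle is the quantitative core of the ``useless refinement'' statement: bounding the remainder in the expansion of the displayed functional uniformly over all data configurations and all cubes at level $\geq n$, and checking it is genuinely dominated by the penalty increment. This is where the threshold ``level $n$'' and the constant $L\geq15$ are used; the delicate point is that the ambient value $a$ — the constant taken by $\hat s_m$ on the surrounding cube — is itself data-dependent and can be large, so the geometric decay of $D_Q$ must be balanced against it with care.
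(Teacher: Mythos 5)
Your plan is genuinely different from the paper's, and unfortunately it has a gap at its core that you yourself flag as ``the main obstacle.'' You reduce the lemma to a ``useless refinement'' claim --- that subdividing a cube at level $\geq n$ never strictly increases the maximized quantity or $\gamma$ --- but you never prove it. The expansion you write down for $\alpha H^2 + T$ and the bound $D_Q \leq n\,2^{-jd}$ are a reasonable starting point, but converting them into a uniform statement requires controlling the data-dependent histogram value $a$ against the geometric decay of $D_Q$, and you stop precisely there. As it stands, this is a sketch of a possible strategy, not a proof; moreover I do not believe that strategy is the right one, because $D_Q$ is conserved when a cube is split into its $4^d$ children ($\sum_{\text{child}} D_{Q'} = D_Q$), so a pure ``depth kills contributions'' argument has to be made cell by cell with some care, and the histogram values $b = N_{K'}/D_{K'}$ are computed on the cells of $m'$, not on the intersections $K \cap K'$, so they do not behave monotonically under refinement of $m'$.

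The paper's route avoids all of this. It is a bare counting argument and uses no analytic expansion whatsoever. On the one hand, the individual building blocks are uniformly bounded: $H^2(\hat s_m \1_K, \hat s_{m'}\1_K) \leq 1$ and $|T(\hat s_m \1_K, \hat s_{m'}\1_K)| \leq 2$, so for the inner supremum (or for $\gamma$ itself) any candidate $m'$ must satisfy $\gamma_K(m_1,m') \leq 3 - L\,|m'\vee K|\,\log n/n$. On the other hand, the trivial partition gives a lower bound $\geq -2 - L\log n/n$, so the maximizer satisfies $|m'\vee K| \leq 1 + 5n/(L\log n) \leq n$ once $L\geq 15$ and $n>3$. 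Finally, the combinatorics of the DeVore partitions forces every $m \in \M_\ell \setminus \M_n$ to have cardinality $\geq 4^d + (4^d-1)n \geq n+1$, so the maximizer is in $\M_n$. The same argument applied to $\gamma$ itself (via Lemma~\ref{LemmeControlGamma}) shows the minimizer $m^\star$ satisfies $|m^\star| \leq 3 + 5n/(L\log n) \leq n$, hence $m^\star \in \M_n$. This also explains the role of $L\geq 15$ and the threshold $n$: they come from a crude bound on the penalty term, not from a remainder estimate. I recommend you replace your refinement argument by this direct cardinality bound.
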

Consequently, if $\ell \geq n + 1$ or $\ell = \infty$,
$$C' \E \left[H^2 \left(s \1_{A}, \hat{s} \right) \right] \leq  \inf_{m \in \M_{n}} \left\{ \E  \left[H^2 \left(s \1_{A}, V_m \right) \right] + \pen(m) \right\}.$$
Let $m^{\star} \in \M_{\ell}$ such that
$$2 \inf_{m \in \M_{\ell}} \left\{  \E  \left[H^2 \left(s \1_{A}, V_m \right) \right] + \pen(m) \right\} \geq   \E  \left[H^2 \left(s \1_{A}, V_{m^{\star}} \right) \right] + \pen(m^{\star}).$$
Since
$$\inf_{m \in \M_{\ell}} \left\{  \E  \left[H^2 \left(s \1_{A}, V_m \right) \right] + \pen(m) \right\}\leq \frac{1}{2} + L \frac{\log n}{n},$$
we deduce  $L |m^{\star}| \log (n) / n \leq  1 + 2 L \log(n) / n$ and thus $|m^{\star}| \leq 2 + n / ( L \log n) \leq n$.
Remark now that the cardinal of a partition $m \in \M_{\ell} \setminus \M_{n }$ can be lower bounded by 
$$|m| \geq 4^d + (4^d - 1) n \geq n + 1.$$
Consequently, $m^{\star} \in \M_n$ and hence,
$$\inf_{m \in \M_{n}} \left\{  \E  \left[H^2 \left(s \1_{A}, V_m \right) \right] + \pen(m) \right\} \leq 2 \inf_{m \in \M_{\ell}} \left\{  \E  \left[H^2 \left(s \1_{A}, V_m \right) \right] + \pen(m) \right\}$$
which completes the proof.
\qed

\subsection{Proof of Theorem~\ref{ThmSelectPractique2}.} \label{SectionProofTheorem2}
The proof of this theorem requires the two following lemmas whose proofs are postponed to Sections~\ref{SectionPreuveLemmeControlGamma} and~\ref{SectionPreuveLemmeProprietesDuTest}.

\begin{lemme} \label{LemmeControlGamma}
For all $m \in \M_{\ell}$, there exists a deterministic set $S_m$ containing $\hat{s}_m$ such that
\begin{eqnarray*}
\gamma_1 (m) = \sup_{m' \in \M_{\ell}}  \left\{  \alpha H^2 \left(\hat{s}_m,  \hat{s}_{m'} \right) +  T \left(\hat{s}_m, \hat{s}_{m'} \right) -  \pen(m')    \right\} +   \pen(m) 
\end{eqnarray*}
and
\begin{eqnarray*}
\gamma_2 (m) = \sup_{\substack{m' \in \M_{\ell} \\ f' \in S_{m'}}}  \left\{  \alpha H^2 \left(\hat{s}_m,  f' \right) +  T \left(\hat{s}_m, f'\right) -   \pen(m')    \right\} +   2 \pen (m) 
\end{eqnarray*}
satisfy
$$\gamma_1(m) \leq \gamma (m) \leq \gamma_2 (m).$$
\end{lemme}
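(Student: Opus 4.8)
The plan is to prove the two inequalities $\gamma_1(m)\le\gamma(m)$ and $\gamma(m)\le\gamma_2(m)$ separately, the common engine being the additivity of $H^2$ and of the functional $T$ over the cells of a common refinement. First I would record: if $\bar m$ is a partition of $[0,1]^{2d}$ into cells $c$ and $f,f'$ vanish off $[0,1]^{2d}$, then $H^2(f,f')=\sum_c H^2(f\1_c,f'\1_c)$ and $T(f,f')=\sum_c T(f\1_c,f'\1_c)$. The statement for $H^2$ is immediate; for $T$ it needs a short check because each of its three terms is an integral in $y$ (or a sum over $i$) of a pointwise functional of $(f(X_i,\cdot),f'(X_i,\cdot))$ that vanishes as soon as both arguments vanish — so, the cells being pairwise disjoint and covering $[0,1]^{2d}$, any evaluation point lies in at most one cell, and there the contribution of $(f\1_c,f'\1_c)$ coincides with that of $(f,f')$ while the others contribute nothing; the convention $0/0=0$ in the middle term of $T$ is exactly what makes this go through. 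Taking $\bar m=m\vee m'$ yields the identity $\alpha H^2(\hat{s}_m,\hat{s}_{m'})+T(\hat{s}_m,\hat{s}_{m'})=\sum_{K\in m}\sum_{K'\in m'}F_K(K')=\sum_{K\in m}\bigl(\gamma_K(m')+\pen(m'\vee K)\bigr)$, which links the global penalized criterion to the local quantities $F_K,\gamma_K$ of Section~\ref{SectionAlgorithm}.

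For $\gamma_1(m)\le\gamma(m)$: fix $m'\in\M_\ell$ and use the same partition $m'$ in every cell $K$ of $m$. Combining the identity above with $\sum_{K\in m}\pen(m'\vee K)=L|m\vee m'|(\log n)/n=\pen(m\vee m')$ reduces the claim to the combinatorial bound $|m\vee m'|\le|m|+|m'|-1$. This I would get from the bijection $\psi_\ell$ between $\M_\ell$ and complete $4^d$-ary trees (Lemma~\ref{LemmeLienArbreEtPartition}): $m\vee m'$ corresponds to the union of the two trees, whose internal nodes form the union of the two sets of internal nodes, and for a complete $4^d$-ary tree one has $\#\mathrm{leaves}=(4^d-1)\#\mathrm{internal}+1$. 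Hence $\pen(m\vee m')-\pen(m')\le\pen(m)$, so $\alpha H^2(\hat{s}_m,\hat{s}_{m'})+T(\hat{s}_m,\hat{s}_{m'})-\pen(m')\le\sum_{K}\gamma_K(m')+\pen(m)\le\gamma(m)-\pen(m)$; taking the supremum over $m'$ gives $\gamma_1(m)\le\gamma(m)$.

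For $\gamma(m)\le\gamma_2(m)$: for each $K\in m$ choose $m'_K\in\M_\ell$ nearly attaining $\sup_{m'}\gamma_K(m')$, set $\bar m=\bigsqcup_{K\in m}(m'_K\vee K)$ and let $f'$ be the function equal to $\hat{s}_{m'_K}$ on each $K\in m$ (and $0$ off $[0,1]^{2d}$). Every cell of $\bar m$ is a dyadic cube of level $\le\ell$, hence $\bar m\in\M_\ell$; $f'$ is piecewise constant on $\bar m$; and by additivity $\alpha H^2(\hat{s}_m,f')+T(\hat{s}_m,f')=\sum_{K}\sum_{K'\in m'_K}F_K(K')$ with $\sum_{K}\pen(m'_K\vee K)=\pen(\bar m)$, so that $\gamma(m)=\alpha H^2(\hat{s}_m,f')+T(\hat{s}_m,f')-\pen(\bar m)+2\pen(m)$ up to an arbitrarily small error from the near-maximization. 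It then suffices to produce, for each $m''\in\M_\ell$, a deterministic set $S_{m''}$ with $\hat{s}_{m''}\in S_{m''}$, $f'\in S_{\bar m}$, and $\log|S_{m''}|$ bounded by a constant times $|m''|\log n$: the displayed identity then witnesses $\gamma(m)\le\gamma_2(m)$. Here I would use that the value of any $\hat{s}_{m''}$ on a cell $J$ equals $N_J/D_J$ with $N_J$ an integer in $\{0,\dots,n\}$ and $D_J=\mu(A_2(J))\cdot b$, $b\in\{0,\dots,n\}$, where $A_2(J)$ is the projection of $J$ onto the last $d$ coordinates (a dyadic cube of level $\le\ell$); so this value lies in $\mu(A_2(J))^{-1}\{p/q:0\le p\le q\le n\}$ (convention $0/0=0$). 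Taking $S_{m''}$ to be the set of non-negative functions piecewise constant on $m''$ whose value on each cell $c$ lies in $\mathcal V_c:=\bigcup_{Q}\mu(Q)^{-1}\{p/q:0\le p\le q\le n\}$, the union over the $\le\ell+1$ dyadic super-cubes $Q$ of the last-$d$-coordinate projection of $c$, works: $\hat{s}_{m''}\in S_{m''}$ (take $Q$ equal to the projection of each cell), $f'\in S_{\bar m}$ by construction, and $|\mathcal V_c|\le(\ell+1)(n+1)^2$, so $\log|S_{m''}|\le C|m''|\log n$ in the regime $\ell\le n$ relevant here.

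The step I expect to be the main obstacle is the bookkeeping around the ``coarse'' cells $K\in m$ on which the maximizing $m'_K$ is coarser than $\{K\}$: there $\bar m$ keeps $K$ as a cell while $f'$ takes on $K$ the local value $N_Q/D_Q$ of a \emph{strict} dyadic super-cube $Q\supsetneq K$, which is exactly why $\mathcal V_c$ must include the normalizations $\mu(Q)^{-1}$ for all super-cubes $Q$ of the projection of $c$, not only $\mu(A_2(c))^{-1}$ — and one must check that this enlargement still keeps $\log|S_{m''}|$ of order $|m''|\log n$, which is what makes $S_{m''}$ usable in the union bound underlying the proof of Theorem~\ref{ThmSelectPractique2}. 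A secondary, purely technical point is the careful verification of the additivity of $T$, which hinges on the $0/0$ convention and on the half-open structure of the dyadic intervals $I_j(\cdot)$.
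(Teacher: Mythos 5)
Your proof is correct and follows essentially the same route as the paper: the finite deterministic set $S_{m'}$ is built exactly as in the paper's Claim~\ref{ClaimModels} (possible values $a/(b\,\mu(Q))$ with $Q$ a dyadic super-cube of the $J$-projection), the additivity of $H^2$ and $T$ over the cells of a common refinement is the engine linking $\gamma$ to the global quantities, and both directions $\gamma_1\le\gamma\le\gamma_2$ are handled as in the paper. The only place you diverge is the combinatorial estimate $|m\vee m'|\le|m|+|m'|-1$, which you derive through the $4^d$-ary tree bijection; the paper gets the (slightly weaker but sufficient) bound $|m\vee m'|\le|m|+|m'|$ directly from the observation that, for dyadic partitions, each nonempty $K\cap K'$ is already a cell of $m$ or of $m'$, so $m\vee m'\subset m\cup m'$.
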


\begin{lemme} \label{LemmeProprietesDuTest}
Set  $\varepsilon =  \left(2+3 \sqrt{2}\right)/8$. Under assumptions of Theorem~\ref{ThmSelectPractique2},  for all $\xi > 0$, there exists an event $\Omega_{\xi}$ such that $\P(\Omega_{\xi}) \geq 1 - 3 e^{-n \xi}$ and on which,
\begin{eqnarray} \label{RelationLemmeProprieteTest}
 & & \quad  \text{for all partition  $m \in \M_{\ell}$,}  \\
& & \sup_{\substack{m' \in \M_{\ell} \\ f' \in S_{m'}}} \left\{  \left(1 - \varepsilon \right) H^2 \left(s\1_A, f'  \right) + T\left(\hat{s}_m, f'  \right) - \pen(m') \right\} \leq\left(1 + \varepsilon \right) H^2 \left(s\1_A, \hat{s}_m \right) + \pen(m) + 22 \xi \nonumber
\end{eqnarray}
where $S_{m'}$ is defined in Lemma~\ref{LemmeControlGamma}.
\end{lemme}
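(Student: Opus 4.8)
The plan is to prove the slightly stronger assertion obtained by replacing $\hat s_m$ by an arbitrary element $f$ of the deterministic set $S_m$ furnished by Lemma~\ref{LemmeControlGamma}; since $\hat s_m \in S_m$, this entails~(\ref{RelationLemmeProprieteTest}). It then suffices to exhibit $\Omega_\xi$ with $\P(\Omega_\xi)\ge 1-3e^{-n\xi}$ on which, for all $m,m'\in\M_{\ell}$, $f\in S_m$, $f'\in S_{m'}$,
\[
T(f,f') \;\le\; (1+\varepsilon)\,H^2(s\1_A,f) \;-\; (1-\varepsilon)\,H^2(s\1_A,f') \;+\; \pen(m) + \pen(m') + 22\xi .
\]
For \emph{fixed} deterministic $f,f'$ I would split $T(f,f')$ along the filtration $\mathcal{F}_i=\sigma(X_0,\dots,X_i)$. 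Writing $T(f,f')=n^{-1}\sum_{i=0}^{n-1}(W_i+Z_i)$, where $W_i$ gathers the $i$-th summands of the first and third terms of~(\ref{TestBaraud}) (a function of $X_i$ only) and $Z_i=2^{-1/2}\big(\sqrt{f'(X_i,X_{i+1})}-\sqrt{f(X_i,X_{i+1})}\big)\big/\sqrt{f(X_i,X_{i+1})+f'(X_i,X_{i+1})}$, I set $D_i=W_i+\E[Z_i\mid\mathcal{F}_i]$; by the Markov property $\E[Z_i\mid\mathcal{F}_i]=\E[Z_i\mid X_i]$, so $D_i$ is a function of $X_i$ alone, while $M_n=\sum_{i=0}^{n-1}\big(Z_i-\E[Z_i\mid\mathcal{F}_i]\big)$ is an $(\mathcal{F}_i)$-martingale with increments bounded by $\sqrt2$ (as $|Z_i|\le 2^{-1/2}$). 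Thus $T(f,f')=n^{-1}\sum_i D_i+n^{-1}M_n$.

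The pathwise term $n^{-1}\sum_i D_i$ is controlled by an elementary inequality. With $u=\sqrt{f(X_i,y)}$, $v=\sqrt{f'(X_i,y)}$, $w=\sqrt{s(X_i,y)}$ and $\E[Z_i\mid X_i]=2^{-1/2}\int w^2(v-u)\big/\sqrt{u^2+v^2}\,\d\mu(y)$, the integrand of $D_i$ equals $(2\sqrt2)^{-1}\sqrt{u^2+v^2}\,(v-u)+2^{-1}(u^2-v^2)+2^{-1/2}w^2(v-u)\big/\sqrt{u^2+v^2}$; this is the analogue in the present framework of the variational inequality for the Hellinger affinity of~\cite{BaraudMesure}, and one checks that for all $u,v,w\ge0$ it is dominated by a suitable combination of $(w-u)^2$ and $-(w-v)^2$, the value $\varepsilon=(2+3\sqrt2)/8$ being pinned down by the sharp constants in that inequality. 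Summing over $i$, and using that $H^2(s\1_A,f)$ and $H^2(s\1_A,f')$ are built from the \emph{same} points $X_0,\dots,X_{n-1}$, gives (pathwise) $n^{-1}\sum_i D_i\le c_1\,H^2(s\1_A,f)-c_2\,H^2(s\1_A,f')$ with $c_1<1+\varepsilon$ and $c_2<1-\varepsilon$, leaving a little slack in both coefficients.

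For $n^{-1}M_n$ I would apply a Bernstein/Freedman inequality for martingales with bounded increments, after bounding the predictable quadratic variation by Hellinger discrepancies: $\E\big[(Z_i-\E[Z_i\mid\mathcal{F}_i])^2\mid\mathcal{F}_i\big]\le\E[Z_i^2\mid X_i]\le C\int\big[(\sqrt{s(X_i,y)}-\sqrt{f(X_i,y)})^2+(\sqrt{s(X_i,y)}-\sqrt{f'(X_i,y)})^2\big]\d\mu(y)$, so that $\langle M\rangle_n\le Cn\big(H^2(s\1_A,f)+H^2(s\1_A,f')\big)$. Optimising the free parameter in the Bernstein bound yields, for each fixed $(f,f')$ and each $x>0$, $\P\big(M_n\ge\eta\,n\,(H^2(s\1_A,f)+H^2(s\1_A,f'))+C'x\big)\le e^{-x}$, i.e. the variance is re-absorbed into an arbitrarily small multiple $\eta$ of the $H^2$ terms and what remains is a genuine $e^{-x}$ deviation.

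It remains — and this is the crux — to make the deviation uniform over $m,m'\in\M_{\ell}$ and $f\in S_m$, $f'\in S_{m'}$. I would union-bound using: (i) a cardinality estimate $\log|S_m|\le C\,|m|\log n$ for the grids of Lemma~\ref{LemmeControlGamma} (analogous to the bound $|S_V|\le(30n)^{\dim V/2}$ drawn from Lemma~5 of~\cite{BirgeTEstimateurs}); (ii) the $4^d$-ary tree structure of $\M_{\ell}$, which bounds the number of partitions with a prescribed number $D$ of cells by $e^{\kappa_d D}$ and hence supplies summable weights $e^{-\Delta(m)}$ with $\Delta(m)$ of order $|m|$; and (iii) the penalty $\pen(m)=L|m|\log n/n$, for which $n\,\pen(m)$ dominates $\log|S_m|+\Delta(m)$ once $L\ge L_0$, with a surplus left over to absorb $C'x$. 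Choosing $x$ of order $n(\pen(m)+\pen(m'))+n\xi$ in the deviation bound and summing the bounds (separately for $M_n$ and $-M_n$, and over the three constituents of $T$, whence the factor $3$ and the numerical constant $22$) produces $\Omega_\xi$. The main obstacle is precisely this verification: one must confirm that the \emph{single} extra factor $\log n$ carried by the penalty simultaneously beats the metric entropy of the grids $S_m$ and the exponentially-many-in-$|m|$ partitions, while the slack $\eta$ siphoned from the variance keeps the resulting coefficients of $H^2(s\1_A,f)$ and $H^2(s\1_A,f')$ within the prescribed values $1+\varepsilon$ and $-(1-\varepsilon)$; tracking all these constants is what produces the (admittedly crude) value of $L_0$ quoted in Theorem~\ref{ThmSelectPractique2}.
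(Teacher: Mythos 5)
Your proposal follows essentially the same route as the paper: the paper isolates exactly your martingale part as $Z(f,f')$ (the centred middle term of $T$), controls the compensator pathwise via the Baraud-type variational inequality with constants $1\pm 1/\sqrt{2}$, bounds $Z$ by a conditional Bernstein/martingale inequality whose quadratic variation is $\leq 3n\bigl(H^2(s\1_A,f)+H^2(s\1_A,f')\bigr)$, absorbs that variance into the slack so that $\varepsilon = 1/\sqrt{2}+(1-1/\sqrt{2})/4=(2+3\sqrt{2})/8$, and union-bounds over $(m,m',f,f')$ using $\log|S_m|\leq 3|m|\log(n+1)$, the tree bound $\sum_{m\in\M_{\ell}}e^{-|m|}\leq\sqrt{3}$ and the penalty for $L\geq 90$. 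The only (harmless) discrepancies are bookkeeping: in the paper the factor $3$ comes from $\bigl(\sum_m e^{-|m|}\bigr)^2\leq 3$ and the constant $22$ from the Bernstein parameter $\beta=(3/z+\sqrt{2})/2\approx 21.2$, not from a union over $\pm M_n$ or the three constituents of $T$, and only a one-sided deviation for $Z$ is needed.
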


\begin{proof} [Proof of Theorem~\ref{ThmSelectPractique2}.]
On $\Omega_{\xi}$, for all $m \in \M_{\ell}$,
\begin{eqnarray*}
\sup_{\substack{m' \in \M_{\ell} \\ f' \in S_{m'}}} \left\{\left(1 - \varepsilon \right) H^2 \left(s\1_A, f'  \right) + T\left(\hat{s}_{m},f'  \right)   -   \pen(m' ) \right\} \leq\left(1 + \varepsilon \right) H^2 \left(s\1_A, \hat{s}_{m} \right) +   \pen(m)  + 22 \xi.
\end{eqnarray*}
If $ T\left(\hat{s}_{m},\hat{s}_{\hat{m}} \right) +    \pen (m) -   \pen(\hat{m} ) \geq 0$, 
\begin{eqnarray*}
\alpha H^2 \left(s\1_A, \hat{s}_{\hat{m}} \right) &\leq& (1 - \varepsilon) H^2 \left(s\1_A, \hat{s}_{\hat{m}} \right) +   T\left(\hat{s}_{m},\hat{s}_{\hat{m}} \right) -  \pen (\hat{m} ) +     \pen (m) \\
&\leq& \left(1 + \varepsilon \right) H^2 \left(s\1_A, \hat{s}_{m} \right)  +  2   \pen (m)  + 22 \xi
\end{eqnarray*}
since $\alpha \leq 1 - \varepsilon$ and since $\hat{s}_{\hat{m}}$ belongs to $\cup_{m' \in \M_{\ell}} S_{m'}$.

If $ T\left(\hat{s}_{m},\hat{s}_{\hat{m}} \right) +   \pen(m) -  \pen (\hat{m} ) < 0$, 
\begin{eqnarray*}
\alpha H^2 \left(\hat{s}_m, \hat{s}_{\hat{m}} \right) &\leq& \alpha  H^2 \left(\hat{s}_{\hat{m}}, \hat{s}_m  \right) +   T\left(\hat{s}_{\hat{m}}, \hat{s}_{m} \right)  -    \pen(m) +  \pen(\hat{m} )  \\
&\leq&  \sup_{m' \in \M_{\ell}} \left\{\alpha H^2 \left(\hat{s}_{\hat{m}}, \hat{s}_{m'}  \right) +   T\left(\hat{s}_{\hat{m}}, \hat{s}_{m'} \right)  -     \pen(m')   \right\}  +   \pen(\hat{m} ) \\
&\leq& \gamma_1 (\hat{m}).
\end{eqnarray*}
Consequently, by Lemma~\ref{LemmeControlGamma}, 
$$\gamma_1 (\hat{m}) \leq \gamma(m) + \frac{1}{n} \leq \gamma_2(m) +\frac{1}{n},$$
which implies that
\begin{eqnarray*}
\alpha H^2 \left(\hat{s}_m, \hat{s}_{\hat{m}} \right)
\leq  \sup_{\substack{m' \in \M_{\ell} \\ f' \in S_{m'}}}  \left\{  \alpha H^2 \left(\hat{s}_m,  f' \right) +  T \left(\hat{s}_m, f'\right) - \pen(m')    \right\} + 2 \pen(m)  + \frac{1}{n}. 
\end{eqnarray*}
With $\upsilon = (1 - \varepsilon)/ \alpha - 1 > 0$,
\begin{eqnarray*}
\alpha H^2 \left(\hat{s}_m, \hat{s}_{\hat{m}} \right)  &\leq&  \left(1 + \upsilon^{-1} \right)  H^2 \left(\hat{s}_m,  s \1_A \right)   \\ 
& &  \qquad + \sup_{\substack{m' \in \M_{\ell} \\ f' \in S_{m'}}}  \left\{ (1  - \varepsilon)   H^2 \left(s\1_A,  f' \right) +    T \left(\hat{s}_m, f'\right) -   \pen (m')    \right\} + 2 \pen (m) + \frac{1}{n}  \\
&\leq&    \left(1 + \upsilon^{-1} \right)  H^2 \left(\hat{s}_m,  s \1_A \right)  +  \left[ \left(1 + \varepsilon \right) H^2 \left(s\1_A, \hat{s}_{m} \right) +    \pen(m)  +  22 \xi \right]  + 2 \pen (m)   +  \frac{1}{n} \\
&\leq&   \left(2 + \varepsilon + \upsilon^{-1} \right) H^2 \left(\hat{s}_m,  s \1_A \right)   + 3 \pen (m)  +  22 \xi +  \frac{1}{n}.
\end{eqnarray*}
This leads to
\begin{eqnarray*}
\alpha H^2 \left(s\1_A, \hat{s}_{\hat{m}} \right)  &\leq& 2 \alpha  H^2 \left(s\1_A, \hat{s}_m\right) + 2 \alpha H^2 \left(\hat{s}_m, \hat{s}_{\hat{m}} \right) \\
&\leq& 2 \left(2  + \alpha +  \varepsilon + \upsilon^{-1} \right)  H^2 \left(\hat{s}_m,  s \1_A \right)  + 6 \pen(m) + 44 \xi  + \frac{2 }{n}.
\end{eqnarray*}
Finally, we have proved that there exists $C > 0$, such that,  with probability larger than $1 - 3 e^{-n \xi}$, for all $m \in \M_{\ell}$,
\begin{eqnarray*}
C H^2 \left(s\1_A, \hat{s}_{\hat{m}} \right)  \leq    H^2 \left(\hat{s}_m,  s \1_A\right)  +  \pen(m) + \xi.
\end{eqnarray*}
This concludes the proof.
\end{proof}

\subsubsection{Proof of Lemma~\ref{LemmeControlGamma}.} \label{SectionPreuveLemmeControlGamma}
\begin{Claim} \label{ClaimModels}
Let, for all $K \in \cup_{m \in \M_{\ell}} m$, $K_1,\dots,K_{l}$ be the cubes of $ \cup_{m \in \M_{\ell}} m$ such that $K \subset K_i$ for all $i \in \{1,\dots,l\}$.
For all $i \in \{1,\dots,l\}$, let $I_i$ and $J_i$ be the subsets of $[0,1]^d$ such that $K_i = I_i \times J_i$.
Set
$$S_K = \bigcup_{i=1}^l \left\{ \frac{a}{b  \mu(J_i)} \1_{K} , \, a \in \{0,\dots, n\}, \, b \in \{1 , \dots, n\} \right\}$$ 
with the convention $ a / 0 = 0$ whatever $a \in \{0,\dots, n\}$. Then $|S_K| \leq \ell n(n+1) $, and
\begin{eqnarray} \label{eqTK}
\forall K' \in \cup_{m \in \M_{\ell}} m, \, K \subset K', \quad \frac{\sum_{i=0}^{n-1} \1_{K'} (X_i,X_{i+1})}{\sum_{i=0}^{n-1} \int_{\XX} \1_{K'} (X_i,x) \d \mu (x)}  \1_K \in S_K.
\end{eqnarray}
\end{Claim}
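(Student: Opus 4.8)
The plan is to prove the cardinality bound and the membership~(\ref{eqTK}) separately; the first is a counting argument about the dyadic ancestors of $K$, the second a direct computation exploiting the product structure $K' = I' \times J'$ of the cubes in play.

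For the cardinality bound, I would first record two structural facts about $\cup_{m\in\M_\ell}m$: every cube appearing in a partition of $\M_\ell$ is a dyadic cube $K_{j,\mathbf{l}} \in \mathcal{K}_j$ with $j \le \ell - 1$ (producing a cube of level $j$ costs $j$ subdivisions, and partitions of $\M_\ell$ are built in fewer than $\ell$ steps), and conversely every such dyadic cube does appear in some partition of $\M_\ell$ (refine $[0,1]^{2d}$ only along the branch leading to it). Hence, if $K$ is a cube of this family of level $j \le \ell - 1$, the cubes $K_i \supseteq K$ of $\cup_{m\in\M_\ell}m$ are precisely the ancestors of $K$ at the levels $0,1,\dots,j$ (two dyadic cubes nest or are disjoint), one per level, so $l = j+1 \le \ell$. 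Since for each fixed $i$ the set $\{\, a/(b\,\mu(J_i)) : a \in \{0,\dots,n\},\ b \in \{1,\dots,n\}\,\}$ has at most $(n+1)n$ elements, taking the union over $i \in \{1,\dots,l\}$ gives $|S_K| \le l(n+1)n \le \ell\, n(n+1)$.

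For~(\ref{eqTK}), I would fix $K' \in \cup_{m\in\M_\ell}m$ with $K \subset K'$; by the definition of $K_1,\dots,K_l$ there is an index $i_0$ with $K' = K_{i_0} = I_{i_0}\times J_{i_0}$. Using $\1_{K'}(X_j,x) = \1_{I_{i_0}}(X_j)\1_{J_{i_0}}(x)$ and integrating in $x$, the denominator in~(\ref{eqTK}) equals $b\,\mu(J_{i_0})$ with $b := \sum_{j=0}^{n-1}\1_{I_{i_0}}(X_j)\in\{0,\dots,n\}$, while the numerator $a := \sum_{j=0}^{n-1}\1_{K'}(X_j,X_{j+1})$, being a sum of $n$ values in $\{0,1\}$, lies in $\{0,\dots,n\}$. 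Thus the function in~(\ref{eqTK}) equals $\frac{a}{b\,\mu(J_{i_0})}\1_K$, read through the convention $0/0 = 0$ of~(\ref{EstimateurConstantParMorceaux}).

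It then remains to check that $\frac{a}{b\,\mu(J_{i_0})}\1_K \in S_K$ in all cases. If $b\ge 1$ and $\mu(J_{i_0})>0$, this is immediate from the definition of $S_K$ applied with the index $i_0$. If $b\,\mu(J_{i_0}) = 0$, the function equals $0\cdot\1_K$ by the convention; when $b=0$ one moreover checks $a = 0$, since $\1_{K'}(X_j,X_{j+1}) \le \1_{I_{i_0}}(X_j) = 0$ for every $j$, and $0\cdot\1_K$ is recovered in $S_K$ by taking $a = 0$, $b = 1$ (using $0/(1\cdot\mu(J_{i_0})) = 0$ if $\mu(J_{i_0})>0$, and the convention $a/0 = 0$ if $\mu(J_{i_0})=0$). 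The only mildly fiddly point is this bookkeeping around degenerate denominators; everything else is the identification $K' = K_{i_0}$ together with the two elementary identities above, so I do not expect a genuine obstacle.
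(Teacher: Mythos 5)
The paper states Claim~\ref{ClaimModels} without proof, so there is no author argument to compare against; judged on its own, your proof is correct and gives exactly the reasoning that is being elided. You identify the structural facts that make it work (dyadic cubes of the same grid nest or are disjoint, so the cubes of $\cup_{m\in\M_\ell}m$ containing $K$ are precisely the ancestors of $K$, one per dyadic level), you compute the denominator via the product structure $K' = I_{i_0}\times J_{i_0}$ to land on the form $\frac{a}{b\,\mu(J_{i_0})}\1_K$ with $a\in\{0,\dots,n\}$, $b\in\{0,\dots,n\}$, and you correctly observe that the one case $S_K$ does not cover directly, $b=0$, forces $a=0$ because $\1_{K'}(X_j,X_{j+1})\le\1_{I_{i_0}}(X_j)$, so the convention $0/0=0$ from~\eqref{EstimateurConstantParMorceaux} matches the element obtained from $a=0$, $b=1$ in $S_K$. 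The only caveat is the off-by-one in the depth count ($j\le\ell-1$ versus $j\le\ell$), which stems from an ambiguity in the paper's own phrase \emph{less than $\ell$ steps} (Figure~1 and the operation count in Section~\ref{SectionAlgorithm} suggest levels up to $\ell$, which would give $l\le\ell+1$ and $|S_K|\le(\ell+1)n(n+1)$); your reading is the one that makes the stated bound $\ell n(n+1)$ literally true, and in any case the discrepancy is immaterial where the bound is used, since $\log|S_m|\le 3|m|\log(n+1)$ still holds for $\ell\le n$.
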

We then define $$S_m = \left\{ \sum_{K \in m} f_K , \, f_K \in S_K \right\}$$
where $S_K$ is given by the claim above and introduce the random set 
$$\hat{S}_m = \left\{ \sum_{K \in m} \hat{s}_{m_K} \1_K ,\, \forall K \in m, \,  m_K \in \M_{\ell} \right\}.$$
For all  $\hat{f} \in \hat{S}_m$, we denote by $m_K (\hat{f})$ any partition of $\M_{\ell}$ such that
$$\hat{f}  (x)  = \hat{s}_{m_K (\hat{f})} (x) \quad \text{for all $x \in K$}$$
and consider  the partition $$m(\hat{f}) = \bigcup_{K \in m} (m_K (\hat{f}) \vee K ).$$
By definition,
\begin{eqnarray*}
\gamma (m) = 2 \pen(m) + \sum_{K \in m} \sup_{m'_K \in \M_{\ell}} \left[ \alpha H^2 \left(\hat{s}_m\1_{K}, \hat{s}_{m'_K }\1_{K}\right) +  T \left(\hat{s}_m \1_{K},\hat{s}_{m'_K }\1_{K} \right) -  \pen(m'_K \vee K  )   \right]
\end{eqnarray*}
and thus
\begin{eqnarray*}
\gamma (m)  &=& \sup_{\substack{ \hat{f} \in \hat{S}_{m } }}  \left\{\sum_{K \in m}  \left[ \alpha H^2 \left(\hat{s}_m \1_{K}, \hat{f} \1_{K}\right) +  T \left(\hat{s}_m\1_{K}, \hat{f} \1_{K} \right) -  \pen (m_K (\hat{f}) \vee K)   \right] \right\} +  2 \pen(m) \\
&=& \sup_{\substack{ \hat{f} \in \hat{S}_{m } }}  \left\{  \alpha H^2 \left(\hat{s}_m,  \hat{f} \right) +  T \left(\hat{s}_m, \hat{f} \right) -  \pen (m(\hat{f}))    \right\} +  2 \pen(m). 
\end{eqnarray*}
Now, for all $m,m' \in \M_{\ell}$, the estimator $\hat{s}_{m'}$, belongs to $\hat{S}_{m}$ with 
$$m(\hat{s}_{m'}) = \left\{ K \cap K', \, (K,K') \in m \times m', \, K \cap K' \neq \emptyset\right\},$$
 which leads to
\begin{eqnarray*}
\gamma (m)  \geq \sup_{m' \in \M_{\ell}}  \left\{  \alpha H^2 \left(\hat{s}_m,  \hat{s}_{m'} \right) +  T \left(\hat{s}_m, \hat{s}_{m'} \right) - \pen (m(\hat{s}_{m'}))    \right\} +    2 \pen(m).
\end{eqnarray*}
Since $m(\hat{s}_{m'}) \subset  m \cup m'$, $|m(\hat{s}_{m'})| \leq  |m| + |m'|$ and $\gamma_1(m) \leq \gamma(m)$.

Let us now prove the inequality $\gamma \leq \gamma_2$. A function $\hat{f} \in \hat{S}_m$ is constant on each set of the partition~$m(\hat{f})$.
For  $K' \in m(\hat{f})$, there exist $K \in m$, $K'' \in m_K (\hat{f})$ with $K' \subset K''$ such that
 $$\hat{f} \1_{K'} = \frac{\sum_{i=0}^{n-1} \1_{K''} (X_i,X_{i+1})}{\sum_{i=0}^{n-1} \int_{\XX} \1_{K''} (X_i,x) \d \mu (x)}  \1_{K'}.$$ 
By relation~{(\ref{eqTK})}, $\hat{f} \1_{K'}  \in S_{K'}$ and thus $\hat{f} = \sum_{K' \in m(\hat{s})} \hat{f} \1_{K'}$  belongs  to $S_{m(\hat{f})}$. Consequently, $ \hat{S}_{m } \subset \cup_{m' \in \M_{\ell}} S_{m'}$ and the conclusion follows.
\qed

\subsubsection{Proof of Lemma~\ref{LemmeProprietesDuTest}.}\label{SectionPreuveLemmeProprietesDuTest}
We start with the claim below.
\begin{Claim}
Let $\psi$ be the function defined on $[0,+\infty)^2$ by 
$$ \psi(x,y) = \frac{1}{\sqrt{2}} \frac{\sqrt{y} - \sqrt{x}}{\sqrt{x + y}} \quad \text{for all $x,y \in [0,+\infty)$}$$
with the convention $0/0 = 0$.

Let, for all $f,f' \in \L_+^1 (\XX^2, M)$, with support included in $A$, $Z(f,f')$ be the random variable defined by
\begin{eqnarray*}
 Z(f, f')  = \frac{1}{n}\sum_{i=0}^{n-1}  \left(  \psi \left(f (X_i, X_{i+1}), f'(X_i, X_{i+1}) \right)  - \int_{\XX}   \psi \left(f (X_i, y), f' (X_i, y) \right) (s \1_A) (X_i, y) \d \mu(y) \right).
\end{eqnarray*}
Then,
\begin{eqnarray} \label{RelationTEtZ}
\left(1 - \frac{1}{\sqrt{2}} \right) H^2 \left(s\1_A, f'  \right) + T\left(f,f'  \right)  \leq\left(1 + \frac{1}{\sqrt{2}} \right) H^2 \left(s\1_A, f \right) + Z \left(f, f'  \right)
\end{eqnarray}
and
\begin{eqnarray} \label{RelationTEtZ2}
\frac{1}{n} \sum_{i=0}^{n-1}  \int_{\XX} \psi^2 \left(f(X_i,y), f'(X_i,y) \right) \d \mu (y) \leq 3  \left( H^2 (s\1_A, f) + H^2(s\1_A,f') \right).
\end{eqnarray}
\end{Claim}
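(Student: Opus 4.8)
The plan is to prove both inequalities of the Claim \emph{pointwise} in $(i,y)$: for each $i\in\{0,\dots,n-1\}$ and each $y\in\XX$ I would establish an inequality between the three non‑negative numbers $u=f(X_i,y)$, $v=f'(X_i,y)$, $w=(s\1_A)(X_i,y)$, and then integrate in $\d\mu(y)$ and average over $i$. Off the support of $f+f'$ one has $u=v=0$, so by the convention $0/0=0$ every $\psi$‑term vanishes and both inequalities are trivial there; on the support one is inside $A$, so $w=s(X_i,y)$, which is why it is harmless to carry the single symbol $w$ throughout. The first step is to rewrite $T(f,f')$ in terms of $\psi$: since $\sqrt{u+v}(\sqrt v-\sqrt u)=(u+v)\sqrt2\,\psi(u,v)$ and $(\sqrt v-\sqrt u)/\sqrt{u+v}=\sqrt2\,\psi(u,v)$, the first term of $T$ equals $\frac1{2n}\sum_i\int_\XX(f+f')\psi(f,f')\,\d\mu(y)$ and the second equals $\frac1n\sum_i\psi(f,f')(X_i,X_{i+1})$. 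Subtracting $Z(f,f')$ cancels the diagonal $(X_i,X_{i+1})$ contributions and leaves
\[
T(f,f')-Z(f,f')=\frac1{2n}\sum_{i=0}^{n-1}\int_\XX\Big[(f+f')\,\psi(f,f')+(f-f')+2\,(s\1_A)\,\psi(f,f')\Big](X_i,y)\,\d\mu(y).
\]

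For (\ref{RelationTEtZ}) it then suffices to show, for all $u,v,w\ge0$, that $(u+v)\psi(u,v)+(u-v)+2w\,\psi(u,v)\le(1+\tfrac1{\sqrt2})(\sqrt w-\sqrt u)^2-(1-\tfrac1{\sqrt2})(\sqrt w-\sqrt v)^2$. Writing $a=\sqrt u$, $b=\sqrt v$, $c=\sqrt w$, $r=\sqrt{a^2+b^2}$ and using $(u+v)\psi=\tfrac1{\sqrt2}r(b-a)$, $2w\psi=\sqrt2\,c^2(b-a)/r$ and $u-v=-(b-a)(a+b)$, the $\pm(b-a)(a+b)$ terms cancel and one checks that the right‑hand side minus the left‑hand side equals the quadratic in $c$
\[
\frac{\sqrt2\,\varrho}{r}\,c^2+\big(b(2-\sqrt2)-a(2+\sqrt2)\big)\,c+\frac{r\,\varrho}{\sqrt2},\qquad \varrho:=r-b+a=\sqrt{a^2+b^2}-b+a\ \ge\ 0 .
\]
Its leading and constant coefficients are non‑negative, so it is enough that it be $\ge0$ on $[0,+\infty)$: if the linear coefficient is $\ge0$ this is immediate, and if it is $<0$ one needs a non‑positive discriminant, which after simplification reads exactly $\sqrt2(a+b)\le2\sqrt{a^2+b^2}$, that is $(a-b)^2\ge0$. (The degenerate case $a=b=0$ is trivial.)

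For (\ref{RelationTEtZ2}) I would follow the same route. The quantity that really matters here is $\int_\XX\psi^2(f(X_i,y),f'(X_i,y))\,(s\1_A)(X_i,y)\,\d\mu(y)=\E\!\left[\psi(f,f')(X_i,X_{i+1})^2\mid X_i\right]$, which is what controls the conditional variance of the increments of $Z$; so the pointwise target is $\psi^2(u,v)\,w\le\tfrac32\big[(\sqrt w-\sqrt u)^2+(\sqrt w-\sqrt v)^2\big]$. Normalising $u+v=1$ and setting $a=\sqrt u=\cos\theta$, $b=\sqrt v=\sin\theta$, this becomes $(5+\sin2\theta)\,c^2-6c(\cos\theta+\sin\theta)+3\ge0$, a quadratic in $c=\sqrt w$ with positive leading coefficient and discriminant $36(1+\sin2\theta)-12(5+\sin2\theta)=24(\sin2\theta-1)\le0$; averaging over $i$ and integrating in $\d\mu(y)$ then gives the stated bound (recall also that $|\psi|\le1/\sqrt2$ pointwise, which legitimises all the interchanges).

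The computations above are routine; the one delicate point is the bookkeeping of constants. The values $1\pm1/\sqrt2$ in (\ref{RelationTEtZ}) and the factor $3$ in (\ref{RelationTEtZ2}) are chosen precisely so that the two discriminants collapse to $(a-b)^2\ge0$ and $\sin2\theta\le1$ respectively; the real work is therefore to organise the algebra so that this collapse becomes visible, not to invoke any nontrivial tool.
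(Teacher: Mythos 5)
Your argument is correct, and it is a genuine contribution compared with what is in the paper: the paper's ``proof'' of this Claim is a one-line pointer to Propositions~2 and~3 of \cite{BaraudMesure}, so no argument is actually supplied. You write out the full pointwise reduction. Your decomposition of $T-Z$ into the $y$-integrated term $\frac{1}{2n}\sum_i\int\bigl[(f+f')\psi+(f-f')+2(s\1_A)\psi\bigr]\d\mu$ is exact (the diagonal $(X_i,X_{i+1})$ contributions do cancel), and the two elementary inequalities you then establish --- the quadratic-in-$c=\sqrt w$ with leading coefficient $\sqrt2\,\varrho/r$ and constant $r\varrho/\sqrt2$, and the quadratic $(5+\sin2\theta)c^2-6c(\cos\theta+\sin\theta)+3$ --- are verified correctly; the discriminant computations do collapse to $(a-b)^2\ge0$ and $\sin2\theta\le1$ as you say, and the case split on the sign of the linear coefficient is handled properly (when it is nonnegative all three terms are nonnegative, so no discriminant argument is needed there). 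This is the same kind of pointwise reduction carried out in Baraud's paper, so the route is essentially that of the cited source, made explicit.

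One observation worth recording: you silently correct what appears to be a typographical slip in the paper's display (\ref{RelationTEtZ2}), whose integrand as printed lacks the factor $(s\1_A)(X_i,y)$. As you note, the quantity actually needed in the proof of Lemma~\ref{LemmeProprietesDuTest} is the conditional second moment $\E\bigl[\psi^2(f,f')(X_i,X_{i+1})\mid X_i\bigr]=\int_\XX\psi^2\bigl(f(X_i,y),f'(X_i,y)\bigr)s(X_i,y)\,\d\mu(y)$, and the pointwise inequality $\psi^2(u,v)\,w\le\tfrac32\bigl[(\sqrt w-\sqrt u)^2+(\sqrt w-\sqrt v)^2\bigr]$ is exactly what yields the bound $V_n\le 3n\bigl(H^2(s\1_A,f)+H^2(s\1_A,f')\bigr)$ used there. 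So your version of (\ref{RelationTEtZ2}) is the one that matches the paper's subsequent use of the Claim, and it is the one you prove.
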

\begin{proof}
These inequalities can be obtained by using the same arguments as those used in the proofs of Propositions~2 and 3 of~\cite{BaraudMesure}.
\end{proof}
We shall prove~{(\ref{RelationLemmeProprieteTest})} by applying the following concentration inequality to the random variable $Z \left(f, f'  \right)$.

\begin{Claim} \label{ClaimConcentrationInequality}
For all $i \leq n-1$, let $\mathcal{F}_i$ be the $\sigma$-field generated by the random variables $X_j$ for $j \in \{0,\dots, i\}.$ 
Let $f_1,\dots,f_n \in \LL^1 (\XX^2,M)$ such that there exists  $b \in \R$ with $\sup_{x \in \XX^2} |f_i(x)| \leq b$ for all $i \in \{0,\dots, n-1\}$. Set
$$S_n =  \sum_{i=0}^{n-1} \left( f_i (X_i,X_{i+1}) - \E \left[ f_i (X_i, X_{i+1}) \mid \mathcal{F}_i \right] \right)$$
and $$V_n = \sum_{i=0}^{n-1} \E \left[ f_i^2 (X_i, X_{i+1}) \mid \mathcal{F}_i \right].$$
Then, for all $\beta > b$ and $x > 0$
$$\P \left[ S_n \geq \frac{V_n}{2 (\beta - b)} + \beta x \right] \leq e^{-x}.$$
\end{Claim}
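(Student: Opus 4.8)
The plan is to prove this Bernstein-type inequality for martingales by the classical exponential supermartingale method.

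First I would reduce the tail bound to an exponential moment estimate: it suffices to show that for every $\lambda \in (0, 1/b)$,
$$\E \left[ \exp \left( \lambda S_n - \frac{\lambda^2 V_n}{2 (1 - \lambda b)} \right) \right] \leq 1.$$
Indeed, given $\beta > b$ and $x > 0$, applying this with $\lambda = 1/\beta \in (0,1/b)$, for which $1 - \lambda b = (\beta - b)/\beta$ and $\lambda^2 V_n / (2(1-\lambda b)) = V_n / (2 \beta (\beta - b))$, the event $\{ S_n \geq V_n/(2(\beta - b)) + \beta x\}$ is exactly $\{\lambda S_n - \lambda^2 V_n/(2(1-\lambda b)) \geq x\}$, and Markov's inequality applied to the exponential gives probability at most $e^{-x}$. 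The degenerate case $V_n = 0$ (equivalently $f_i(X_i,X_{i+1}) = 0$ a.s.\ conditionally, so $S_n = 0$) is trivial and disposed of first.

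Next I would fix $\lambda \in (0,1/b)$, set $\xi_i = f_i(X_i,X_{i+1}) - \E \left[ f_i(X_i,X_{i+1}) \mid \mathcal{F}_i \right]$ and $w_i = \E \left[ f_i^2(X_i,X_{i+1}) \mid \mathcal{F}_i \right]$ (which is $\mathcal{F}_i$-measurable and bounded by $b^2$), and introduce $M_k = \exp \big( \lambda \sum_{i=0}^{k-1} \xi_i - \tfrac{\lambda^2}{2(1-\lambda b)} \sum_{i=0}^{k-1} w_i \big)$ for $0 \leq k \leq n$, so that $M_0 = 1$ and $M_n$ is the variable above. Since $M_k$ is $\mathcal{F}_k$-measurable and $\xi_k$ is $\mathcal{F}_{k+1}$-measurable, it is enough to check that $(M_k)$ is an $(\mathcal{F}_k)$-supermartingale, which amounts to the conditional bound
$$\E \left[ e^{\lambda \xi_k} \mid \mathcal{F}_k \right] \leq \exp \left( \frac{\lambda^2 w_k}{2(1-\lambda b)} \right).$$
To get this I would use the elementary inequality that, for any $u_0 \in [0,1)$ and any real $u \leq u_0$, $e^u \leq 1 + u + u^2/(2(1-u_0))$ --- proved by writing $e^u = 1 + u + u^2 \phi(u)$ with $\phi(u) = (e^u - 1 - u)/u^2$, verifying that $\phi$ is nondecreasing on $\R$ (an easy one-variable calculus check) so that $\phi(u) \leq \phi(u_0) \leq \sum_{j \geq 0} u_0^j / 2 = 1/(2(1-u_0))$, and using $u^2 \geq 0$. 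Applying it with $u = \lambda f_k(X_k,X_{k+1}) \leq \lambda b < 1$ and taking conditional expectation yields, with $m_k = \E \left[ f_k(X_k,X_{k+1}) \mid \mathcal{F}_k \right]$, the bound $\E \left[ e^{\lambda f_k(X_k,X_{k+1})} \mid \mathcal{F}_k \right] \leq 1 + \lambda m_k + \lambda^2 w_k/(2(1-\lambda b))$; multiplying by $e^{-\lambda m_k}$ and using $1 + t \leq e^t$ with $t = \lambda m_k + \lambda^2 w_k/(2(1-\lambda b))$ gives $\E \left[ e^{\lambda \xi_k} \mid \mathcal{F}_k \right] \leq \exp(\lambda^2 w_k/(2(1-\lambda b)))$, as wanted. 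Hence $\E[M_n] \leq M_0 = 1$, which is the desired exponential moment bound.

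The argument is essentially routine; the one point that needs care is obtaining the constant $2(1-\lambda b)$, rather than the weaker $2(1-\lambda b)^2$, in the denominator. This is why one should keep $\lambda m_k$ and $\lambda^2 w_k/(2(1-\lambda b))$ together and apply $1 + t \leq e^t$ to their sum rather than term by term, and it is also crucial that $V_n$ is built from the conditional second moments $\E[f_i^2 \mid \mathcal{F}_i]$ rather than the conditional variances, so that no correction term coming from $m_k^2$ is required. Keeping track of the filtration indexing ($\xi_i$ being $\mathcal{F}_{i+1}$-measurable while $w_i$ is $\mathcal{F}_i$-measurable) is the only other place where a slip is easy.
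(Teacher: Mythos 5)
Your proof is correct and follows essentially the same strategy as the paper: a Chernoff bound combined with an exponential supermartingale argument, reducing the tail estimate to a one-step conditional moment-generating-function bound $\E[e^{\lambda \xi_k} \mid \mathcal{F}_k] \leq \exp\bigl(\lambda^2 w_k/(2(1-\lambda b))\bigr)$. The only difference is that the paper cites Bernstein's inequality (Proposition~2.9 of Massart) for that one-step bound, whereas you derive it directly from the elementary estimate $e^u \leq 1 + u + u^2/(2(1-u_0))$ for $u \leq u_0 < 1$, which makes the argument self-contained.
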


\begin{proof}
 By setting $a^{-1} = 2 (\beta - b)$,
\begin{eqnarray*}
\log \P \left[ S_n \geq a V_n + \beta x \right] &\leq& -  x + \log \E \left[ \exp \left(  \beta^{-1}  S_n - a \beta^{-1}  V_n \right) \right] \\
&\leq& -  x + \log \E \left[   \exp \left(\beta^{-1}  S_{n-1} - a  \beta^{-1}  V_n \right)  \E \left[ \exp \left( \beta^{-1} (  S_n -  S_{n-1})  \right)  \mid \mathcal{F}_{n-1} \right] \right]. 
\end{eqnarray*}
By using Bernstein inequality (Proposition~2.9 of~\cite{Massart2003}),
$$\E \left[ \exp \left( \beta^{-1} (  S_n -  S_{n-1})  \right)  \mid \mathcal{F}_{n-1} \right]  \leq \exp \left(\frac{ \beta^{-2} ( V_n - V_{n-1})}{2 (1 - \beta^{-1} b )} \right)$$
and thus
\begin{eqnarray*}
\log \P \left[ S_n \geq a V_n + \beta x \right]  \leq  -  x + \log \E \left[  \exp  \left(\beta^{-1}  S_{n-1} - a  \beta^{-1} V_{n-1} \right)    \right]. 
\end{eqnarray*}
The result follows by induction.
\end{proof}

\begin{proof} [Proof of Lemma~\ref{LemmeProprietesDuTest}.]
Set $z = (1-1/\sqrt{2})/4$,  $\beta = (3/z + \sqrt{2})/2$ and for all $\xi > 0$, 
$$\Omega_{\xi}  = \left\{ \sup_{\substack{(f,f') \in S_m \times S_{m'} \\ (m, m') \in \M_{\ell}^2}}  \frac{Z (f , f') }{z \left(H^2 (f, s \1_A) + H^2 ( f', s \1_A) \right)  + \pen(m) + \pen(m') + \beta \xi} < 1 \right\}.$$
On $\Omega_{\xi} $, for all $m,m' \in \M_{\ell}$, $(f,f') \in S_m \times S_{m'}$,
$$Z (f , f') \leq z \left(H^2 (f, s \1_A) + H^2 ( f', s \1_A) \right)  + \pen(m) + \pen(m') + \beta \xi$$
and~{(\ref{RelationLemmeProprieteTest})} derives from~{(\ref{RelationTEtZ})} (with $\varepsilon = 1/\sqrt{2} + z$).

It remains to prove that $\P(\Omega_{\xi}^c) \leq 3 e^{-n \xi}$. We have
\begin{eqnarray*}
\P\left(\Omega_{\xi}^c \right) \leq \sum_{\substack{(f,f') \in S_m \times S_{m'} \\ (m, m') \in \M_{\ell}^2}} \P \left[ Z (f, f') \geq z \left[ H^2 (s\1_A, f) + H^2(s\1_A,f') \right]   +   \pen(m) + \pen(m') + \beta \xi \right].
\end{eqnarray*}
We apply the concentration inequality given by Claim~\ref{ClaimConcentrationInequality} with $f_i = \psi \left( f, f' \right)$, $S_n =  n Z (f, f') $ and by using relation~{(\ref{RelationTEtZ2})}, 
$$V_n = \sum_{i=0}^{n-1} \E \left[ f_i^2 (X_i, X_{i+1}) \mid \mathcal{F}_i \right] \leq 3 n \left( H^2 (s\1_A, f) + H^2(s\1_A,f') \right).$$
We obtain for all $x > 0$,
$$\P \left[ Z (f, f') \geq \frac{3}{\sqrt{2} (\beta \sqrt{2}-1)} \left[ H^2 (s \1_A, f) + H^2(s\1_A,f') \right] + \beta \frac{x}{n} \right] \leq e^{-x}.$$
Note that $z = 3/ (\sqrt{2} (\beta \sqrt{2}-1))$. By using the inequality above with 
$$\beta \frac{ x}{n} =  \pen(m) + \pen(m') + \beta \xi$$
 we deduce that
\begin{eqnarray*}
\P\left(\Omega_{\xi}^c  \right) \leq \sum_{\substack{(f,f') \in S_m \times S_{m'} \\ (m, m') \in \M_{\ell}^2}} e^{- n \left( \beta^{-1} \pen(m) + \beta^{-1}  \pen(m') +  \xi \right)}. 
\end{eqnarray*}
Now, by Claim~\ref{ClaimModels}, since $\ell \leq n$, $\log |S_m| \leq 3 |m|  \log (n+1)$ and thus $\beta^{-1} \pen(m) \geq  (|m| + \log |S_m|)/ n$ for all $m \in \M_{\ell}$. Consequently,
\begin{eqnarray*}
\P\left(\Omega_{\xi}^c  \right)
&\leq& \sum_{\substack{(f,f') \in S_m \times S_{m'} \\ (m, m') \in \M_{\ell}^2}} e^{- \left(|m| + \log |S_m| + |m'| + \log |S_{m'}| + n \xi \right)} \\
&\leq&  \left(\sum_{ m \in \M_{\ell}} e^{-|m|} \right)^2 e^{-n \xi}.
\end{eqnarray*}
The conclusion follows from the inequality $\sum_{ m \in \M_{\ell}} e^{-|m|} \leq \sqrt{3}$ (see Section 3.2.4 of~\cite{BaraudBirgeHistogramme}).
\end{proof}

\subsection{Proof of Lemma~\ref{LemmeProcedureStationaire}.} \label{SectionProofLemmaProcedureStationnaire}
The lemma follows from the two claims below.
\begin{Claim} \label{ClaimProcedureStationnaire}
Let for each $m_1,m_2 \in \M_{\infty}$ and $K \in m_1$,
\begin{eqnarray*}
\gamma_K ({m_1}, {m_2})    =   \alpha H^2 \left(\hat{s}_{m_1} \1_{K}, \hat{s}_{m_2} \1_{K}\right) +  T \left(\hat{s}_{m_1}\1_{K},\hat{s}_{m_2}\1_{K } \right)   -   \pen ({m_2} \vee K).
\end{eqnarray*}
Then, for all $\ell \in \N^{\star}$, $\ell \geq  n + 1$, $m_1 \in \M_{\infty}$, $K \in m_1$,
$$\sup_{m_2 \in \M_{\ell}} \gamma_K (m_1,m_2)  = \sup_{m_2 \in \M_n} \gamma_K (m_1,m_2)$$
and thus
$$\sup_{m_2 \in \M_{\infty}} \gamma_K (m_1,m_2)  = \sup_{m_2 \in \M_n} \gamma_K (m_1,m_2).$$
\end{Claim}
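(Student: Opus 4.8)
The plan is to turn the statement into a purely combinatorial fact about the $4^d$-ary trees of Section~\ref{SectionAlgorithm}. First I would record the additivity already exploited there: $H^2$, $T$ and $\pen$ split along a partition, and $\hat{s}_{m_1}\1_K=\hat{s}_K$ since $K\in m_1$, so
\begin{align*}
\gamma_K(m_1,m_2) &= \sum_{\substack{K'\in m_2\\ K'\cap K\neq\emptyset}}\left(F_K(K')-\tfrac{L\log n}{n}\right),\\
F_K(K') &:= \alpha H^2\big(\hat{s}_K\1_{K'},\hat{s}_{K'}\1_K\big)+T\big(\hat{s}_K\1_{K'},\hat{s}_{K'}\1_K\big).
\end{align*}
In particular $\gamma_K(m_1,m_2)$ depends on $m_2$ only through the trace partition $m_2\vee K$. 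Since $\M_\ell$ is finite the suprema are maxima, so it suffices to show that every $m_2\in\M_\ell$ can be replaced by some $m_2^\sharp\in\M_n$ with $\gamma_K(m_1,m_2^\sharp)\ge\gamma_K(m_1,m_2)$; the inequality $\sup_{m_2\in\M_\ell}\gamma_K(m_1,m_2)\ge\sup_{m_2\in\M_n}\gamma_K(m_1,m_2)$ is trivial because $\M_n\subseteq\M_\ell$, and the statement for $\M_\infty$ then follows from $\M_\infty=\bigcup_\ell\M_\ell$.

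Next I would discard the obviously wasteful parts of $m_2$, using the decomposition above together with the convention $0/0=0$. (i) A cube of $m_2$ disjoint from $K$ contributes $0$ to $\gamma_K$ whether or not it is subdivided, so one may un-refine everything disjoint from $K$. (ii) If a cube $K'$ of $m_2$ meets $K$ but $\sum_{i=0}^{n-1}\1_{K'}(X_i,X_{i+1})=0$, then every child of $K'$ carries no observation pair either, so $\hat{s}_{m_2}$, $\hat{s}_{K'}$ and all the $\hat{s}_{K'_j}$ vanish on $K'$; by additivity of $F_K$ the subdivision leaves the $F_K$-terms unchanged while it can only enlarge $|m_2\vee K|$, hence does not increase $\gamma_K$ — so one may un-refine every cube lying inside an observation-free cube. (iii) Similarly, if $K'\subseteq K$ and all sub-cubes of $m_2$ contained in $K'$ carry the common value $\hat{s}_{K'}$, the subdivision changes neither $H^2$ nor $T$ on $K$ but enlarges $|m_2\vee K|$, so it may be undone. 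After performing all such reductions, along every root-to-leaf branch of the $4^d$-ary tree of $m_2$ (Lemma~\ref{LemmeLienArbreEtPartition}) that reaches $K$, each internal cube carries at least one observation pair and is split in a way that genuinely moves mass of $\hat{s}_{m_2}$ across $K$.

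It remains to bound the depth of such a reduced $m_2$ by $n$. Along a branch $K^{(0)}\supsetneq\cdots\supsetneq K^{(d)}$ reaching $K$, the integers $\nu_j:=\#\{i\le n-1:(X_i,X_{i+1})\in K^{(j)}\}$ are non-increasing, start at $n$ and stay $\ge 1$; hence if $d>n$ there is a level $j$ with $\nu_j=\nu_{j+1}$, i.e.\ all pairs of $K^{(j)}$ sit in a single child $K^{(j+1)}$ and the other $4^d-1$ children are observation-free. I would then show such a ``flat'' level can be removed: replacing, in $m_2$, the subdivision of $K^{(j)}$ together with the subtree hanging below $K^{(j+1)}$ by the subtree obtained by attaching the analogous refinement directly, and absorbing the empty siblings via reduction (ii), shortens this branch by one without decreasing the corresponding summands of $\gamma_K$; iterating over all over-deep branches brings $m_2$ into $\M_n$, which finishes the proof.

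\textbf{The main obstacle is exactly this last surgery.} One must verify that flattening a non-pair-separating level does not lose value in $\gamma_K$, even though $\pen(m'\vee K)$ counts all cells of the trace, empty ones included; this is where the explicit form of $F_K(K')$ as an elementary function of the two cell-values $\hat{s}_K$ and $\hat{s}_{K'}$, of the local $\mu$-mass and of $\nu_{K\cap K'}$ must be used, together with $L\ge 15$ to make the penalty dominate the (bounded) gain that arbitrarily fine subdivision of a cube carrying few pairs can produce. A secondary point to be dealt with is the borderline case in which $K$ itself has depth larger than $n$: then the only admissible traces are $m_2\vee K=\{K\}$, and the reduction to $\M_n$ amounts to comparing $F_K(\bar K)$ over the ancestors $\bar K\supseteq K$ of depth $\le n$, which again rests on the explicit expression for $F_K$.
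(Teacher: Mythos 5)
Your plan is genuinely different from the paper's, and it remains incomplete at the point you yourself flag. The paper does not perform any local tree surgery at all. Its argument is a one-line two-sided bound on $\gamma_K$: let $m_2^{\star}\in\M_{\ell}$ attain the supremum; then from $H^2(\cdot,\cdot)\le 1$ and $|T(\cdot,\cdot)|\le 2$ one gets $\gamma_K(m_1,m_2^{\star})\le 3-L\,|m_2^{\star}\vee K|\log n/n$, while the trivial competitor $m_2=\{[0,1]^{2d}\}$ gives $\gamma_K(m_1,m_2^{\star})\ge -2-L\log n/n$. Comparing the two yields $|m_2^{\star}\vee K|\le 1+5n/(L\log n)\le n$ for $L\ge 15$, hence $m_2^{\star}\vee K\in\M_n(K)$ and one can produce $m_2^{\bullet}\in\M_n$ with $m_2^{\bullet}\vee K=m_2^{\star}\vee K$ and the same $\gamma_K$ value. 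This directly controls the trace cardinality, with no need to bound depths branch by branch or to argue that flattening a non-pair-separating level cannot lose value. Your proposal misses this observation entirely; the penalty term is never played off against the crude uniform bounds on $H^2$ and $T$.

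As a result, your ``surgery'' step is not a technicality but the entire substance of the claim, and it is left unproved. Concretely, collapsing a level $K^{(j)}\supsetneq K^{(j+1)}$ in the tree of $m_2$ changes the cubes of the partition, hence changes the estimator values $\hat{s}_{K'}$ (both numerator and denominator depend on the cube), and therefore changes every term $F_K(K')$, not just the penalty. Nothing in reductions (i)–(iii) shows that this modification cannot decrease $\gamma_K$, and you do not attempt to show it in the last paragraph either. Your side remark on the ``borderline case'' where $K$ has depth larger than $n$ likewise identifies a point that must be checked but does not resolve it. A secondary inaccuracy: your assertion that $\gamma_K(m_1,m_2)$ depends on $m_2$ only through $m_2\vee K$ fails precisely when $m_2\vee K=\{K\}$, since then $\hat{s}_{m_2}\1_K=\hat{s}_{K'}\1_K$ for whichever cube $K'\in m_2$ contains $K$, and different choices of $K'$ give different constants. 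In short, the blind proposal replaces the paper's short global inequality by a much harder local combinatorial program whose crucial step is missing.
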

\begin{proof}
Let $m_2^{\star} \in \M_{\ell}$ such that $\gamma_K (m_1,m_2^{\star}) = \sup_{m_2 \in \M_{\ell}} \gamma_K (m_1,m_2)$.
In Section~\ref{SectionExPartition}, we have defined the collection $\M_{\ell}$ of partitions of $[0,1]^{2 d}$. Likewise, by using the algorithm of~\cite{DeVore1990}, we define the collection $\M_{\ell} (K)$ of partitions of $K$.
Note that  $m_2^{\star} \vee K$ belongs to $\M_{\ell} (K)$.
Since
$ H^2 \left(\hat{s}_m \1_{K}, \hat{s}_{m'}\1_{K}\right) \leq 1$ and  $|T \left(\hat{s}_m\1_{K},\hat{s}_{m'}\1_{K } \right) | \leq 2$, we have
$$\gamma_K (m_1, m_2^{\star}) \leq 3 - L  \frac{|m_2^{\star} \vee K| \log n}{n}.$$
Remark that
$$\gamma_K (m_1, m_2^{\star}) \geq  \gamma_K \big(m_1, \{[0,1]^{2 d}\} \big) \geq -2 - L \frac{\log n}{n}$$
which leads to
$$|m_2^{\star} \vee K| \leq 1 + \frac{5 n}{L \log n } \leq n.$$
This implies that $m_2^{\star} \vee K$ belongs to $\M_n (K)$.
There exists  $m_2^{\bullet} \in \M_n$ such that $m_2^{\bullet} \vee K  = m_2^{\star} \vee K$ and hence
$\gamma_K (m_1,m_2^{\bullet})= \gamma_K (m_1,m_2^{\star})$
which concludes the proof.
\end{proof}

\begin{Claim}
Set for all $m \in \M_{\infty}$ and $K \in m$,
$$\gamma_K (m) = \sup_{m_2 \in \M_{\ell}} \gamma_K (m,m_2).$$
Then,
 $\gamma(m) = 2 \pen(m) + \sum_{K \in m} \gamma_K(m)$
and for all $\ell \in \N^{\star}$,  $\ell \geq n + 1$,
$$\inf_{m \in \M_{\ell}} \gamma(m) = \inf_{m \in \M_{n}} \gamma(m)$$
and thus $$\inf_{m \in \M_{\infty}} \gamma(m) = \inf_{m \in \M_{n}} \gamma(m).$$
\end{Claim}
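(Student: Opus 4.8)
The plan is to prove the claim in two stages: first the pointwise identity $\gamma(m) = 2\pen(m) + \sum_{K\in m}\gamma_K(m)$, obtained by unwinding the definition of $\gamma$; then the two equalities of infima, obtained from a crude a priori bound on the number of cells of any near-minimizer of $\gamma$, together with Claim~\ref{ClaimProcedureStationnaire} and the cardinality remark already used in the proof of Theorem~\ref{CorTheoremSelectionHisto}.

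\textbf{The identity.} Fix $m\in\M_\ell$ and $K\in m$. For $m'\in\M_\ell$, the nonempty sets $K\cap K'$, $K'\in m'$, form the partition $m'\vee K$ of $K$, and the summands corresponding to $K'$ with $K\cap K'=\emptyset$ vanish, since $H^2(0,0)=T(0,0)=0$. Because $\hat{s}_m$ is constant on $K$ and $\hat{s}_{m'}$ is constant on each element of $m'$, and because both $H^2$ and $T$ are additive with respect to a common decomposition of the supports of their arguments — this is immediate from their definitions as sums of integrals in $y$ and of terms over the data, keeping the convention $0/0=0$ — one gets $\sum_{K'\in m'}\bigl(\alpha H^2(\hat{s}_m\1_{K\cap K'},\hat{s}_{m'}\1_{K\cap K'})+T(\hat{s}_m\1_{K\cap K'},\hat{s}_{m'}\1_{K\cap K'})\bigr)=\alpha H^2(\hat{s}_m\1_K,\hat{s}_{m'}\1_K)+T(\hat{s}_m\1_K,\hat{s}_{m'}\1_K)$. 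Hence the bracket appearing in the definition of $\gamma(m)$ equals $\gamma_K(m,m')$; taking the supremum over $m'\in\M_\ell$ turns it into $\gamma_K(m)$, and summing over $K\in m$ and adding the leftover $2\pen(m)$ yields the identity. The same computation works verbatim with $\M_\ell$ replaced by $\M_\infty$.

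\textbf{The infima.} Since $\M_n\subseteq\M_\ell$, we have $\inf_{\M_\ell}\gamma\le\inf_{\M_n}\gamma$, so only the reverse inequality must be proved. On the one hand, choosing $m_2=m$ in the supremum defining $\gamma_K(m)$ and using $H^2(f,f)=T(f,f)=0$ together with $m\vee K=\{K\}$ (so $\pen(m\vee K)=L\log n/n$) gives $\gamma_K(m)\ge-L\log n/n$, whence
\[ \gamma(m)\ \ge\ 2\pen(m)-|m|\,L\,\frac{\log n}{n}\ =\ L\,|m|\,\frac{\log n}{n}\qquad\text{for all }m\in\M_\ell. \]
On the other hand, testing $\{[0,1]^{2d}\}$ against an arbitrary $m_2\in\M_\ell$ and using the bounds $H^2\le1$, $|T|\le2$, $\pen\ge0$ (as in the proof of Claim~\ref{ClaimProcedureStationnaire}) gives $\gamma(\{[0,1]^{2d}\})\le\alpha+2+2L\log n/n$, so $\inf_{\M_\ell}\gamma\le\alpha+2+2L\log n/n$. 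Consequently every $m$ in the nonempty sublevel set $\{m\in\M_\ell:\gamma(m)\le\inf_{\M_\ell}\gamma+1\}$ satisfies $L|m|\log n/n\le\alpha+3+2L\log n/n$, i.e. $|m|\le2+(\alpha+3)n/(L\log n)$, which is at most $n$ whenever $n>3$ and $L\ge15$ (then $(\alpha+3)/(L\log n)<1$ and $2\le(1-(\alpha+3)/(L\log n))\,n$, the latter quantity being non-decreasing in $n$ so that it suffices to check $n=4$). Since any partition belonging to $\M_\ell\setminus\M_n$ has at least $n+1$ cells (proof of Theorem~\ref{CorTheoremSelectionHisto}), this sublevel set is contained in $\M_n$; as $\inf_{\M_\ell}\gamma$ equals the infimum of $\gamma$ over that sublevel set, we conclude $\inf_{\M_\ell}\gamma\ge\inf_{\M_n}\gamma$ — here using Claim~\ref{ClaimProcedureStationnaire} to see that, on $\M_n$, $\gamma$ does not depend on whether the inner supremum ranges over $\M_\ell$ or over $\M_n$. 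Repeating the argument with $\M_\ell$ replaced by $\M_\infty$ (and invoking Claim~\ref{ClaimProcedureStationnaire} for $\ell=\infty$) gives the last assertion $\inf_{\M_\infty}\gamma=\inf_{\M_n}\gamma$.

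\textbf{Expected difficulty.} The argument is routine; the only points requiring a little care are the additivity of the test functional $T$ under the $0/0$ convention (used in the first stage, and in fact already tacit in the very definition of $\gamma$), and the verification that the cardinality bound $|m|\le n$ holds for every integer $n>3$, not merely asymptotically.
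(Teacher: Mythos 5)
Your proof is correct and follows essentially the same strategy as the paper's: bound $\gamma(m)$ from below by a quantity growing linearly in $|m|$, bound $\inf\gamma$ from above by $\gamma(\{[0,1]^{2d}\})$, conclude that any (near-)minimizer has at most $n$ cells, invoke the cardinality remark from the proof of Theorem~\ref{CorTheoremSelectionHisto}, and use Claim~\ref{ClaimProcedureStationnaire} to see that restricting the inner supremum to $\M_n$ does not change $\gamma$ there. The one genuine (but minor) divergence is the source of the lower bound on $\gamma(m)$: the paper invokes Lemma~\ref{LemmeControlGamma} (i.e.\ $\gamma_1(m)\le\gamma(m)$) and then picks $m'=\{[0,1]^{2d}\}$ to get $\gamma(m)\ge -2 + L(|m|-1)\log n/n$, whereas you exploit the just-established identity $\gamma(m)=2\pen(m)+\sum_K\gamma_K(m)$ and test $m_2=m$ in each $\gamma_K(m)$, which makes $H^2$ and $T$ vanish and yields the cleaner bound $\gamma(m)\ge L|m|\log n/n$. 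Both bounds suffice; yours is marginally tighter and arguably more transparent since it bypasses Lemma~\ref{LemmeControlGamma} entirely. Your sublevel-set formulation also quietly handles the question of whether $\inf_{\M_\ell}\gamma$ is attained (relevant mostly for $\ell=\infty$), which the paper sidesteps by directly taking a minimizer $m^\star$; this is justified since the linear lower bound forces any minimizing sequence to have uniformly bounded cardinality, but you are more careful on this point.
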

\begin{proof}
Let $m^{\star} \in \M_{\ell}$ such that $ \inf_{m \in \M_{\ell}} \gamma(m) = \gamma(m^{\star})$.
By Lemma~\ref{LemmeControlGamma},
\begin{eqnarray*}
\gamma(m^{\star}) &\geq&  \sup_{m' \in \M_{\ell}}  \left\{  \alpha H^2 \left(\hat{s}_m,  \hat{s}_{m'} \right) +  T \left(\hat{s}_m, \hat{s}_{m'} \right) -  \pen(m')    \right\} +  L \frac{|m^{\star}|\log n}{n} \\
&\geq&    \left(  -2  -  L \frac{\log n}{n}   \right) +  L \frac{|m^{\star}|\log n}{n} \\
&\geq&   -2   +  L \frac{\left(|m^{\star}| - 1\right)\log n}{n}.
\end{eqnarray*}
Now, 
$$\gamma(m^{\star}) \leq \gamma(\{[0,1]^{2 d}\}) \leq 2 L \frac{\log n }{n}  + 3$$
which implies that 
$$ \left|m^{\star}\right| \leq 3 + \frac{5 n}{L \log n} \leq n$$
and thus $m^{\star} \in \M_{n}$.
\end{proof}

\subsection{Proof of Theorem~\ref{CorTheoremSelectionHistoDistanceDeterministe}.}
Consider the regular partition  $m_{ref}$ of $[0,1]^{2 d}$ into cubes with side length~$2^{-\ell}$, that is $$m_{ref} = \left\{K_{\ell,\mathbf{l}}, \mathbf{l} = (k, \dots, k) , \, k \in \{1,\dots,2^{\ell}\} \right\}$$
where $K_{\ell,\mathbf{l}}$ is defined in Section~\ref{SectionChoixM}. 
For all partition $m \in \M_{\ell}$,  $V_{m} \subset V_{m_{ref}}$. Set
$$\Omega_{eq} = \left[\forall g_1, g_2 \in V_{m_{ref}} , \; h^2 (g_1,g_2) \leq 11 H^2(g_1,g_2)  \right]$$
and define $\bar{s}_m$ an element of $V_m$ such that $h^2(s \1_A, \bar{s}_m) = h^2(s \1_A,V_m)$. 

For all $m \in \mathcal{M}_{\ell}$,
\begin{eqnarray*}
\E \left[h^2 \left(s \1_A, \hat{s}_{\hat{m}} \right) \right] &\leq&   \E \left[h^2 \left(s\1_A, \hat{s}_{\hat{m}} \right)  \1_{\Omega_{eq}} \right]  + \E \left[h^2 \left(s\1_A, \hat{s}_{\hat{m}}  \right) \1_{\Omega_{eq}^c} \right] \\
&\leq& 2 \E \left[h^2 \left(s\1_A, \bar{s}_{m} \right)  \1_{\Omega_{eq}} \right] + 2 \E \left[h^2 \left( \bar{s}_{m}, \hat{s}_{\hat{m}} \right)  \1_{\Omega_{eq}} \right]  + \E \left[h^2 \left(s\1_A, \hat{s}_{\hat{m}}  \right) \1_{\Omega_{eq}^c} \right]  \\
&\leq& 2 \E \left[h^2 \left(s\1_A, \bar{s}_{m} \right)  \1_{\Omega_{eq}} \right] + 22 \E \left[H^2 \left( \bar{s}_{m}, \hat{s}_{\hat{m}} \right)  \1_{\Omega_{eq}} \right]  + \E \left[h^2 \left(s\1_A, \hat{s}_{\hat{m}}  \right) \1_{\Omega_{eq}^c} \right]  \\
&\leq& 2 \E \left[h^2 \left(s\1_A, \bar{s}_{m} \right)  \1_{\Omega_{eq}} \right] + 44 \E \left[H^2 \left(s\1_A, \bar{s}_{m} \right)  \1_{\Omega_{eq}} \right]  + 44 \E \left[H^2 \left(\hat{s}_{\hat{m}} ,s \1_A \right)  \1_{\Omega_{eq}} \right]  \\
 & & \quad + \E \left[h^2 \left(s \1_A, \hat{s}_{\hat{m}}  \right) \1_{\Omega_{eq}^c} \right].
\end{eqnarray*}
Now, $h^2 \left(s \1_A, \bar{s}_{m} \right)  = \E [H^2 \left(s \1_A, \bar{s}_{m} \right) ]  = h^2(s \1_A,V_m)$ and
\begin{eqnarray*}
\E \left[h^2 \left(s \1_A, \hat{s}_{\hat{m}}  \right) \1_{\Omega_{eq}^c} \right] &\leq&  2 \E\left[ \left(h^2(s ,0) +  h^2 (\hat{s}_{\hat{m}},0)  \right) \1_{\Omega_{eq}^c} \right] \\
&\leq& \E \left[\left(1 + 2 \sup_{m \in \M_{\ell}}  h^2 (\hat{s}_{m},0) \right) \1_{\Omega_{eq}^c} \right].
\end{eqnarray*}
Let for all $K \in m$, $I_K$ and $J_K$ be the subsets of $[0,1]^{d}$ such that $K = I_K \times J_K$. Then,
\begin{eqnarray*}
2 h^2 (\hat{s}_{m},0) =  \sum_{K \in m}  \frac{\sum_{i=0}^{n-1} \1_{I_K} (X_i) \1_{J_K} (X_{i+1})}{\sum_{i=0}^{n-1} \1_{I_K} (X_i) } \int_{I_K} \varphi(x)  \d x 
\leq |m|.
\end{eqnarray*}
Since $m \subset m_{ref}$, $|m| \leq |m_{ref}| = 4^{\ell d}$ and thus,
\begin{eqnarray*}
C' \E \left[h^2 \left(s \1_A, \hat{s}_{\hat{m}} \right) \right] \leq  \inf_{m \in \M_{\ell}} \left\{ h^2 \left(s\1_A, V_m  \right)  + \pen(m) \right\} + 4^{\ell d} \P(\Omega_{eq}^c)
\end{eqnarray*}
for some universal constant $C' > 0$.

We now bound from above the term $\P(\Omega_{eq}^c)$. We denote by  $\I_{ref}$ the regular partition of $[0,1]^{d}$ into cubes with side length~$2^{- \ell}$. Remark that
\begin{eqnarray*}
\P(\Omega_{eq}^c) &\leq& \P \left[\exists I \in \I_{ref}, \, \P(X_1 \in I) \geq \frac{11}{n}  \sum_{i=0}^{n-1} \1_{I} (X_i) \right] \\
&\leq& 2^{\ell d} \sup_{I \in \I_{ref}} \P \left[\frac{1}{n} \sum_{i=0}^{n-1} \left(\1_{I} (X_i) - \P(X_i \in I) \right) \leq -   \frac{10}{11} \P(X_1 \in I) \right].
\end{eqnarray*}
We use the following Bennett-type inequality for $\beta$-mixing random variables (with $f = -\1_{I}$, $v =  \P(X_1 \in I)$, $c = 0$, $\xi = 10/11 \P(X_1 \in I)$). 
\begin{prop} \label{InegaliteConcentration}
Let $(X_i)_{i \geq 1}$ be a stationary sequence of random vectors with values in $\R^d$, and let $f$ be a real-valued function on $\R^d$ bounded from above by~$c \geq 0$ such that $v = \E \left[f(X_i)^2 \right] < \infty$.

Then, for all $q \in \{1,\dots, n\}$ and $\xi > 0$,
$$\P \left( \frac{1}{n} \sum_{i=1}^n (f(X_i) - \E(f(X_i))  > \xi \right) \leq 2 \exp \left( -  \frac{n \xi^2}{8 q \left( v + c \xi / 6 \right) } \right) + 3 n \beta_q / q. $$ 
\end{prop}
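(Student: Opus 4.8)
The plan is to reduce the problem, via a block decomposition together with Berbee's coupling lemma, to a Bennett-type deviation inequality for a sum of \emph{independent} random variables.

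Set $Y_i = f(X_i) - \E[f(X_i)]$ and fix $q \in \{1,\dots,n\}$. Partition $\{1,\dots,n\}$ into $m := \lceil n/q\rceil$ consecutive blocks $I_1,\dots,I_m$, each of length at most $q$ (the last one possibly shorter), and put $T_\ell = \sum_{i\in I_\ell} Y_i$, so that $\sum_{i=1}^n Y_i = \Sigma_{\mathrm{odd}} + \Sigma_{\mathrm{even}}$ with $\Sigma_{\mathrm{odd}} = \sum_{\ell\text{ odd}} T_\ell$ and $\Sigma_{\mathrm{even}} = \sum_{\ell\text{ even}} T_\ell$. The point of this odd/even splitting is that between any two distinct odd blocks (resp. even blocks) lie at least $q$ indices, so that, the $\beta$-mixing coefficients being non-increasing, the $\beta$-mixing coefficient between $\sigma(T_1,T_3,\dots,T_{2j-1})$ and $\sigma(T_{2j+1})$ is at most $\beta_q$, and similarly within the even family.

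I would then apply Berbee's coupling lemma to each family: there exist mutually independent variables $(\tilde T_\ell)_{\ell\text{ odd}}$, each $\tilde T_\ell$ having the law of $T_\ell$, with $\P(\tilde T_\ell \ne T_\ell\text{ for some odd }\ell) \le \lceil m/2\rceil\beta_q$, and likewise for the even family; the two coupling errors add up to at most $m\beta_q = \lceil n/q\rceil\beta_q \le 3n\beta_q/q$ (using $q\le n$), which accounts for the second term of the inequality. It remains to bound $\P(\sum_{\ell\text{ odd}}\tilde T_\ell > n\xi/2)$ and its even analogue for independent summands, which I would do by Chernoff's method with the classical Bennett log-moment bound (as in Chapter~2 of~\cite{Massart2003}). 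Each $T_\ell$ has mean zero; since $f\le c$ and $(\E[f(X_i)])^2\le v$ one has $T_\ell\le q(c+\sqrt v)$, while Cauchy--Schwarz and stationarity give $\E[T_\ell^2]\le q^2v$, hence $\sum_{\ell\text{ odd}}\E[\tilde T_\ell^2]\le nqv/2$ (up to a lower-order term). Optimising and passing to the Bernstein form then yields, with $t=n\xi/2$,
\[
\P\Big(\sum_{\ell\text{ odd}}\tilde T_\ell > \tfrac{n\xi}{2}\Big)\;\le\;\exp\!\left(-\frac{t^2/2}{\,nqv/2+q(c+\sqrt v)t/3\,}\right)\;\le\;\exp\!\left(-\frac{n\xi^2}{8q(v+c\xi/6)}\right)
\]
in the main range $\xi\lesssim\sqrt v$, the complementary range being covered either trivially (when $\xi\ge c+\sqrt v$ the left-hand side of the proposition vanishes, since $\tfrac1n\sum f(X_i)\le c$ and $\tfrac1n\sum\E[f(X_i)]\ge-\sqrt v$) or by the sharper large-deviation form of Bennett's inequality. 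The same bound holds for the even sum; a union bound over the two families produces the factor $2$, and adding the coupling error $3n\beta_q/q$ gives the claim.

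The delicate point is the constant-chasing in this last step: matching $8q$ and $c\xi/6$ exactly forces one to keep track of the centering term $\E[f(X_i)]$ (the $c+\sqrt v$ above), of the incomplete last block, and of the rounding in $m=\lceil n/q\rceil$, which is why the case analysis on the size of $\xi$ is needed. The two structural ingredients — Berbee's coupling and Bennett's inequality — are themselves entirely standard once the block decomposition is fixed, and indeed the whole statement can alternatively be deduced from known Bernstein-type inequalities for $\beta$-mixing sequences.
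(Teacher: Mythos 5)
Your strategy — block decomposition, Berbee coupling, a Bennett/Bernstein bound on the coupled block sums, odd/even split, union bound — matches the paper's proof exactly, and your coupling-error bookkeeping is fine. The genuine gap is the constant-chasing you flag at the end: it is structural, not merely delicate, and your proposed fixes do not close it. By centering first (working with $Y_i=f(X_i)-\E[f(X_i)]$) you trade the bound $f\le c$ for the weaker $Y_i\le c+\sqrt v$, so each block sum is only bounded above by $q(c+\sqrt v)$; Bernstein then gives at best $\exp\bigl(-n\xi^2/(8q(v+(c+\sqrt v)\xi/6))\bigr)$, and the extra $\sqrt v\,\xi/6$ cannot be absorbed, since the required inequality reduces to $\sqrt v\,\xi\le 0$. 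Your trivial regime $\xi\ge c+\sqrt v$ leaves $3\sqrt v<\xi<c+\sqrt v$ uncovered, and ``the sharper large-deviation form of Bennett's inequality'' is asserted rather than carried out. (Also, the loose count $\lceil m/2\rceil q^2 v$ for the variance by itself already spoils the $8q$ when $q>n/2$; the sharp count is $\sum_{\ell\,\mathrm{odd}}\E[T_\ell^2]\le q\sum_{\ell}|I_\ell|\,v\le nqv$, using $\sum_\ell |I_\ell|\le n$.)

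The paper's way out is not to center at all. It pads $\{1,\dots,n\}$ with zeros (taking $g_i=f$ for $i\le n$ and $g_i=0$ otherwise) to make $2l$ blocks of exact length $q$, so each uncentered block sum satisfies $g'_{j,a}\le qc$ on the nose, and then applies Proposition~2.8 of~\cite{Massart2003}, whose variance proxy is the second moment $\sum_j\E[(g'_{j,a})^2]$ of the \emph{uncentered} variables and whose linear term uses the upper bound on the \emph{uncentered} variable. Counting the active indices in each block gives $\sum_j\E[(g'_{j,a})^2]\le nqv$, and with $t=n\xi/2$ the target $\exp\bigl(-n\xi^2/(8q(v+c\xi/6))\bigr)$ comes out exactly, for all $\xi$, with no case distinction. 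The Berbee coupling is also applied to the $X_i$'s themselves (following Viennet) rather than to the block sums, but that difference is cosmetic.
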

We then have for all $I \in \I_{ref}$,
\begin{eqnarray*}
\P \left[\frac{1}{n} \sum_{i=0}^{n-1} \left(\1_{I} (X_i) - \P(X_i \in I) \right) \leq -   \frac{10}{11} \P(X_1 \in I) \right]  \!\!\!\! &\leq&  \!\!\!\!  3 \inf_{1 \leq q \leq n} \left\{ \exp \left( -  \frac{25 n  \P(X_1 \in I)}{242 q   } \right) +  n \beta_q /q \right\} \\
\!\!\!\! &\leq& \!\!\!\!  3 \inf_{1 \leq q \leq n} \left\{ \exp \left( -  \frac{n \kappa_0 }{10  q  2^{\ell d} } \right) +  n \beta_q /q  \right\}
\end{eqnarray*}
which concludes the proof. \qed

\begin{proof} [Proof of Proposition~\ref{InegaliteConcentration}.]
Let  $l$ be the smallest integer larger than  $n / (2 q)$. We derive from Berbee's lemma and more precisely from~\cite{Viennet1997} (page 484) that there exist $X_1^{\star}, \dots, X_{2 l q}^{\star}$ such that
\begin{itemize}
\item For $j = 1,\dots, l$, the random vectors
$$\mathbf{X}_{j,1} = (X_{2 (j-1)q + 1}, \dots, X_{2 (j-1)q + q}) \quad \text{and} \quad \mathbf{X}^{\star}_{j,1} = (X_{2 (j-1)q + 1}^{\star}, \dots, X_{2 (j-1)q + q}^{\star}) $$
have the same distribution, and so have the random vectors
$$\mathbf{X}_{j,2} = (X_{2 (j-1)q + q + 1}, \dots, X_{2 j q}) \quad \text{and} \quad \mathbf{X}^{\star}_{j,2} = (X_{2 (j-1)q + q + 1}^{\star}, \dots, X_{2 j q}^{\star}).$$
\item The random vectors $\mathbf{X}_{1,1}^{\star},\dots,\mathbf{X}_{l,1}^{\star}$ are independent.  The random vectors $\mathbf{X}_{1,2}^{\star},\dots,\mathbf{X}_{l,2}^{\star}$ are also independent.
\item The event $$\Omega^{\star} = \bigcap_{1 \leq j \leq l}\left( \left[ \mathbf{X}_{j,1} \neq \mathbf{X}_{j,1}^{\star} \right] \cap \left[ \mathbf{X}_{j,2} \neq \mathbf{X}_{j,2}^{\star} \right]\right)$$ 
satisfies $\P \left[ (\Omega^{\star} )^c \right] \leq 2 l \beta_q.$
\end{itemize}
We set $g_i(x) = f(x)$ if $i \leq n$ and $g_i(x) = 0$ otherwise. 
For  $j \in \{1,\dots, l \}$, we set
$$g'_{j,1} (x_1,\dots, x_{q}) = \sum_{i=1}^{q} g_{2 (j-1)q + i}(x_i) \quad \text{and} \quad g'_{j,2} (x_1,\dots, x_{q}) = \sum_{i=1}^{q} g_{2 (j-1)q + q + i}(x_i) .$$
 Then,
\begin{eqnarray*}
\P \left[ \left( \frac{1}{n} \sum_{i=1}^{n} \left(g_i(X_i) - \E\left[g_i(X_i)\right]\right)  > \xi \right) \cap \Omega^{\star} \right] &\leq& \P \left( \sum_{j=1}^{l} \left(g'_{j,1}(\mathbf{X}_{j,1}^{\star}) - \E \left[g'_{j,1}(\mathbf{X}_{j,1}^{\star})\right]\right)  >  n \xi/2 \right)  \\
& &  + \P \left(  \sum_{j=1}^{l} \left(g'_{j,2}(\mathbf{X}_{j,2}^{\star}) - \E \left[g'_{j,2}(\mathbf{X}_{j,2}^{\star})\right]\right) >  n \xi/2 \right) \\
&\leq& 2 \exp \left( - \frac{n^2 \xi^2}{8  q  \left(n v + c n \xi / 6 \right) } \right)
\end{eqnarray*}
by using Proposition~2.8 of~\cite{Massart2003}.
\end{proof}

\subsection{Proof of Corollary~\ref{CorVitesseBesovIsotropes1}.}
The corollary ensues from the claim below  and Theorem~2 of~\cite{BaraudBirgeHistogramme}. 

\begin{Claim} \label{ClaimInfimum}
Under Assumption~\ref{hypDensiteDesXi1}, for all $\ell \in \N^{\star}$ such that $2^{\ell d} \geq n$, 
$$ \inf_{m \in \M_{\ell}} \left\{  d_2^2 \left(\restriction{\sqrt{s}}{A}, V_m \right) + \frac{ |m| \log n}{n} \right\}  \leq 4   \inf_{m \in \M_{\infty}} \left\{  d_2^2 \left(\restriction{\sqrt{s}}{A}, V_m \right) +  \frac{|m| \log n}{n} \right\}.$$
\end{Claim}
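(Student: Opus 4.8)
The plan is to show that the minimisation over the sub-family $\M_\ell$ costs at most a factor $2$ (\emph{a fortiori} $4$) compared with the minimisation over $\M_\infty$, by associating to every $m\in\M_\infty$ a partition $m'\in\M_\ell$ obtained by \emph{pruning $m$ at level $\ell$}, and then taking infima. Fix $m\in\M_\infty$; each of its elements is a dyadic cube $K_{j,\mathbf{l}}$ of a well-defined depth $j$ (its side length is $2^{-j}$). I would define $m'$ by keeping every $K\in m$ of depth $\le\ell$ and replacing every $K\in m$ of depth $>\ell$ by the unique dyadic cube of depth exactly $\ell$ that contains it. Distinct cubes of $m$ with the same depth-$\ell$ ancestor are thereby identified, so $|m'|\le|m|$; moreover $m'$ is a partition of $[0,1]^{2d}$ all of whose cubes have depth $\le\ell$ (its associated tree is that of $m$ truncated at level $\ell$), hence $m'\in\M_\ell$. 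Note that $m'$ splits as $m'=(m\cap m')\cup(m'\setminus m)$, where $m\cap m'$ consists of the cubes of $m$ of depth $\le\ell$ and every cube of $m'\setminus m$ has depth exactly $\ell$.

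Next I would bound the approximation error of $V_{m'}$. For a cube $K$ denote by $\langle\sqrt s\rangle_K$ the average of $\restriction{\sqrt{s}}{A}$ over $K$; since $\restriction{\sqrt{s}}{A}\ge0$, the piecewise constant function with these coefficients has non-negative coefficients and so lies in $V_{m'}$, whence
$$d_2^2\big(\restriction{\sqrt{s}}{A},V_{m'}\big)=\sum_{K\in m'}\int_K\big(\sqrt s-\langle\sqrt s\rangle_K\big)^2\,\d x\,\d y,$$
and the same identity holds with $m$ in place of $m'$. Splitting the sum according to $m'=(m\cap m')\cup(m'\setminus m)$: the part over $m\cap m'$ is bounded by $\sum_{K\in m}\int_K(\sqrt s-\langle\sqrt s\rangle_K)^2=d_2^2(\restriction{\sqrt{s}}{A},V_{m})$. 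For a cube $Q\in m'\setminus m$, which has depth $\ell$, write $Q=Q_1\times Q_2$ with $Q_1,Q_2\subset[0,1]^d$ cubes of Lebesgue measure $2^{-\ell d}$; using that a variance is at most the corresponding second moment and that $\int_{Q_2}s(x,y)\,\d y\le\int_{\XX}s(x,y)\,\d\mu(y)=1$,
$$\int_Q\big(\sqrt s-\langle\sqrt s\rangle_Q\big)^2\,\d x\,\d y\;\le\;\int_Q s(x,y)\,\d x\,\d y\;=\;\int_{Q_1}\Big(\int_{Q_2}s(x,y)\,\d y\Big)\d x\;\le\;2^{-\ell d}\;\le\;\frac{1}{n},$$
the last step being exactly the hypothesis $2^{\ell d}\ge n$. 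Since $|m'\setminus m|\le|m'|\le|m|$, summing over the cubes $Q$ gives $d_2^2(\restriction{\sqrt{s}}{A},V_{m'})\le d_2^2(\restriction{\sqrt{s}}{A},V_{m})+|m|/n$.

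To conclude, note that $\log n\ge1$ because $n>3$, so $|m|/n\le|m|\log n/n$, and therefore
$$d_2^2\big(\restriction{\sqrt{s}}{A},V_{m'}\big)+\frac{|m'|\log n}{n}\;\le\;d_2^2\big(\restriction{\sqrt{s}}{A},V_{m}\big)+\frac{2|m|\log n}{n}\;\le\;2\Big(d_2^2\big(\restriction{\sqrt{s}}{A},V_{m}\big)+\frac{|m|\log n}{n}\Big).$$
Since $m'\in\M_\ell$, the left-hand side is at least $\inf_{m''\in\M_\ell}\{d_2^2(\restriction{\sqrt{s}}{A},V_{m''})+|m''|\log n/n\}$; taking the infimum over $m\in\M_\infty$ on the right-hand side yields the stated inequality, in fact with $2$ in place of $4$.

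The only step requiring genuine care is the estimate of the extra approximation error in the second paragraph: a priori a cube $Q$ created by the pruning could carry a non-negligible error, and it is precisely the hypothesis $2^{\ell d}\ge n$ — entering through $\int_Q(\sqrt s-\langle\sqrt s\rangle_Q)^2\le\int_Q s\le2^{-\ell d}\le1/n$ — that keeps the accumulated extra error bounded by $|m|/n$, which is in turn absorbed by the variance term $|m|\log n/n$. Everything else is bookkeeping about the dyadic tree structure; note that Assumption~\ref{hypDensiteDesXi1} itself plays no role here, only the fact that each $s(x,\cdot)$ is a probability density.
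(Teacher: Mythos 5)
Your proof is correct, and it follows the same overall strategy as the paper's: take an $m\in\M_\infty$, prune its $4^d$-ary tree at level $\ell$ to obtain $m'\in\M_\ell$ with $|m'|\le|m|$, and absorb the extra approximation error using the bound $\int_Q s\le 2^{-\ell d}\le 1/n$ on each depth-$\ell$ cube $Q$. However, you handle the approximation error more directly: since $\sqrt{s}\ge 0$, the piecewise constant function with the local $L^2$-means $\langle\sqrt{s}\rangle_K$ has non-negative coefficients, so it lies in the cone $V_{m'}$ and realizes $d_2^2\left(\restriction{\sqrt{s}}{A},V_{m'}\right)$; the error over the kept cubes is then literally a subsum of the error over $m$. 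The paper instead compares $\restriction{\sqrt{s}}{A}$ to $\sqrt{\bar s_{m^\star}}$ (the square root of the local average of $s$) and invokes Lemma~2 of \cite{BaraudBirgeHistogramme} to relate that quantity to $d_2^2\left(\restriction{\sqrt{s}}{A},V_{m^\star}\right)$, which costs an extra factor of $2$. Your variant is self-contained (no appeal to the external lemma), slightly stronger (constant $2$ rather than $4$), and your explicit tree-truncation construction of $m'$ makes the bookkeeping --- in particular that $|m'\setminus m|\le|m'|\le|m|$ and that every new cube has depth exactly $\ell$ --- more transparent than the paper's somewhat abstract definition of $m^\bullet$ as a minimal $\M_\ell$-partition containing the large cubes of $m^\star$.
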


\begin{proof}
For all partition $m \in \M_{\infty}$ and cube $K \in m$, we denote by~$I_K$ and $J_K$  the cubes of $[0,1]^{d}$ such that $K = I_K \times J_K$ and set
$$\bar{s}_{m} = \sum_{K \in m}  \frac{\int_{K} s (x,y)  \d x \d y}{\mu \otimes \mu (K)} \1_K.$$
In this paper, $d_2$ stands for the standard euclidean distance of $\LL^2([0,1]^{2 d}, \mu \otimes \mu)$. In this proof, we make a small abuse of notations by denoting by $d_2$ the standard euclidean distance of $\LL^2(\R^{2 d}, \mu \otimes \mu)$.
 
Let $m^{\star}$ be a partition of $\M_{\infty}$ such that 
$$2 \inf_{m \in \M_{\infty}} \left\{  d^2_2 \left(\sqrt{s} \1_{A}, V_m \right) +  \frac{|m| \log n}{n} \right\} \geq  d^2_2 \left(\sqrt{s} \1_{A}, V_{m^{\star}} \right) +  \frac{|m^{\star}| \log n}{n}.$$
Let $\mathcal{C}$ be the collection $\mathcal{C} = \{K \in m^{\star}, \, \mu(I_K) \geq 2^{- \ell d} \}$ and let 
 $m^{\bullet}$ be a partition of $\M_{\ell}$ containing~$\mathcal{C}$ such that
 $$|m^{\bullet}| = \inf \{|m| , \, m \in \M_{\ell} \; \text{such that} \; m \ni \mathcal{C} \}.$$
 Let $A^{\bullet}$ be the set defined by $ A^{\bullet} = \cup_{K \in m^{\bullet}} K$ and 
 $V_{m^{\bullet}}^{\bullet} = \left\{f \1_{A^{\bullet}}, \, f \in V_{m^{\bullet}}  \right\}.$

 We have,
 \begin{eqnarray*}
  d^2_2 \left(\sqrt{s} \1_A, V_{m^{\bullet}}  \right) \leq  d^2_2 \left(\sqrt{s} \1_{A^{\bullet}}, V_{m^{\bullet}}^{\bullet}  \right) +  d^2_2 \left(\sqrt{s}  \1_{A \cap (A^{\bullet})^c}, 0  \right)
 \end{eqnarray*}
 and 
 $$d^2_2 \left(\sqrt{s}  \1_{A^{\bullet}}, V_{m^{\bullet}}^{\bullet}  \right) \leq  d^2_2 \left(\sqrt{s}  \1_{A^{\bullet}}, \sqrt{\bar{s}_{m^{\bullet}}} \1_{A^{\bullet}} \right) \leq d^2_2 \left(\sqrt{s}  \1_{A}, \sqrt{\bar{s}_{m^{\star}}} \right).$$
By using Lemma~2 of~\cite{BaraudBirgeHistogramme}, $ d^2_2 \left(\sqrt{s}  \1_{A}, \sqrt{\bar{s}_{m^{\star}}} \right) \leq 2d^2_2 \left(\sqrt{s}  \1_{A}, V_{m^{\star}} \right) $ which shows that
 \begin{eqnarray*}
  d^2_2 \left(\sqrt{s}  \1_A, V_{m^{\bullet}}  \right)  \leq   2 d^2_2 \left(\sqrt{s}  \1_{A}, V_{m^{\star}} \right) +  d^2_2 \left(\sqrt{s}  \1_{A \cap (A^{\bullet})^c}, 0  \right).
 \end{eqnarray*}
Now,
\begin{eqnarray*}
 d^2_2 \left(\sqrt{s} \1_{A \cap  (A^{\bullet})^c}, 0 \right) &\leq& \sum_{K \in m^{\star} \setminus \mathcal{C}} \int_{I_K}  \left( \int_{\R^d} s(x,y)  \d y \right)  \d x \\
&\leq& \sum_{K \in m^{\star} \setminus \mathcal{C}} \mu (I_K) \leq   2^{- \ell d} |m^{\star}|.
\end{eqnarray*}
Since $|m^{\bullet}| \leq |m^{\star}|$, we have
  \begin{eqnarray*}
  d^2_2 \left(\sqrt{s}  \1_A, V_{m^{\bullet}}  \right) + \frac{|m^{\bullet}| \log n}{n} \leq  2 d^2_2 \left(\sqrt{s}  \1_{A}, V_{m^{\star}} \right) +  \frac{(1 + \log n) |m^{\star}|}{n}
 \end{eqnarray*}
which  proves the claim.
\end{proof}

 \subsection{Rates of convergences for $h$.} \label{SectionPreuvesCasHDeterministe}
 We prove the result only for geometrically $\beta$-mixing chains (the proof for arithmetically $\beta$-mixing chains being similar). We use the claim below whose proof is the same  than the one of Claim~\ref{ClaimInfimum}.
\begin{Claim} 
Under Assumption~\ref{hypDensiteDesXi1}, for all $\ell \in \N^{\star}$ such that $2^{\ell d} \geq  n / \log^3 n$, 
$$ \inf_{m \in \M_{\ell}} \left\{  h^2 \left(s \1_{A}, V_m \right) + \frac{ |m| \log n}{n} \right\}  \leq 4  \inf_{m \in \M_{\infty}} \left\{  h^2 \left(s \1_A, V_m \right) +  \frac{|m| \log^3 n}{n} \right\}.$$
\end{Claim}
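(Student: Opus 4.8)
The plan is to mimic, essentially verbatim, the proof of Claim~\ref{ClaimInfimum}, making only two changes: replace the unweighted distance $d_2$ of $\L^2([0,1]^{2d},\mu\otimes\mu)$ by the $\varphi$-weighted distance $h$ (equivalently, work in $\L^2([0,1]^{2d},(\varphi\cdot\mu)\otimes\mu)$), and replace the lower bound $2^{\ell d}\geq n$ by the weaker $2^{\ell d}\geq n/\log^3 n$. The two ingredients of that proof — Lemma~2 of~\cite{BaraudBirgeHistogramme}, which controls the error of square-rooting an $\L^2$-projection, and the elementary splitting of a squared distance over a set and its complement — are insensitive to the choice of dominating measure, so they transfer to the weighted setting without change; I will also use that Assumption~\ref{hypDensiteDesXi1} gives $\varphi\leq\kappa$ on $[0,1]^d$.

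First I would fix a partition $m^{\star}\in\M_{\infty}$ realizing the right-hand side up to a factor $2$, i.e. $h^2(s\1_A,V_{m^{\star}})+|m^{\star}|\log^3 n/n\leq 2\inf_{m\in\M_{\infty}}\{h^2(s\1_A,V_m)+|m|\log^3 n/n\}$. Writing each cube $K$ of $[0,1]^{2d}$ as $K=I_K\times J_K$ with $I_K,J_K$ cubes of $[0,1]^d$, I set $\mathcal C=\{K\in m^{\star}:\mu(I_K)\geq 2^{-\ell d}\}$: these are exactly the cubes of $m^{\star}$ of dyadic level $\leq\ell$, hence already admissible in $\M_{\ell}$. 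Since every dyadic cube of level $\ell$ lies either inside $\bigcup_{K\in\mathcal C}K$ or disjoint from it, $\mathcal C$ extends to a partition $m^{\bullet}\in\M_{\ell}$; taking $m^{\bullet}$ of least cardinality among such extensions gives $|m^{\bullet}|\leq|m^{\star}|$, because the level-$\ell$ cubes tiling $[0,1]^{2d}\setminus\bigcup_{K\in\mathcal C}K$ are no more numerous than the strictly finer cubes of $m^{\star}\setminus\mathcal C$ tiling that same region.

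Next I would bound $h^2(s\1_A,V_{m^{\bullet}})$. Letting $\bar s_m$ be the $\L^2((\varphi\cdot\mu)\otimes\mu)$-projection of $s$ onto $V_m$ — a \emph{local} object, so $\bar s_{m^{\bullet}}$ and $\bar s_{m^{\star}}$ coincide on the common cubes of $\mathcal C$ — and using $\sqrt{\bar s_{m^{\star}}}$ on $\bigcup_{K\in\mathcal C}K$ and the constant $0$ on the rest, exactly as in Claim~\ref{ClaimInfimum}, Lemma~2 of~\cite{BaraudBirgeHistogramme} yields
\begin{equation*}
h^2(s\1_A,V_{m^{\bullet}})\ \leq\ 2\,h^2(s\1_A,V_{m^{\star}})\ +\ \tfrac12\sum_{K\in m^{\star}\setminus\mathcal C}\int_{I_K}\varphi(x)\,\d x .
\end{equation*}
Since $\int_{\R^d}s(x,y)\,\d y=1$ and $\int_{I_K}\varphi\,\d x\leq\kappa\,\mu(I_K)<\kappa\,2^{-\ell d}$ for $K\notin\mathcal C$, the residual sum is at most $\tfrac12\kappa\,2^{-\ell d}|m^{\star}|\leq\tfrac12\kappa\,|m^{\star}|\log^3 n/n$, the last step using $2^{\ell d}\geq n/\log^3 n$. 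Adding $|m^{\bullet}|\log n/n\leq|m^{\star}|\log n/n\leq|m^{\star}|\log^3 n/n$ and invoking the near-optimality of $m^{\star}$ gives the claim (one factor $2$ from the projection step, one from the choice of $m^{\star}$; tracking the $\kappa$-dependent residual is what has to be done with a little care to land on the stated constant $4$).

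The proof is thus routine once two bookkeeping points are settled, and these are where I expect the only real work: (i) verifying that $\mathcal C$ genuinely extends to an element of $\M_{\ell}$ and that the minimal such extension satisfies $|m^{\bullet}|\leq|m^{\star}|$, which rests on the dichotomy that a level-$\ell$ dyadic cube lies entirely inside, or entirely outside, $\bigcup_{K\in\mathcal C}K$; and (ii) checking that the passage from $d_2$ to $h$ does not affect the locality and monotonicity of the piecewise-constant projection used in Claim~\ref{ClaimInfimum}. The genuinely new input compared with Claim~\ref{ClaimInfimum} is merely the arithmetic $2^{-\ell d}\leq\log^3 n/n$, which is precisely why the penalty $|m|\log n/n$ on the left is compared with $|m|\log^3 n/n$ on the right.
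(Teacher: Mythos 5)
Your proof takes essentially the same route as the paper, which states only that the argument is identical to that of Claim~\ref{ClaimInfimum} once $d_2$ is replaced by $h$; your account of the transfer (choosing $m^{\star}$, defining $\mathcal C$ and $m^{\bullet}$, invoking Lemma~2 of \cite{BaraudBirgeHistogramme}, splitting off the residual on $A\cap(A^{\bullet})^c$) is the correct one, and the bookkeeping points you flag — that $\mathcal C$ extends to a minimal $m^{\bullet}\in\M_{\ell}$ with $|m^{\bullet}|\le|m^{\star}|$, and that the weighted projection is local — are handled correctly.

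The place where you are right to be uneasy is the residual: $h^2\bigl(s\1_{A\cap(A^{\bullet})^c},0\bigr)\le\tfrac12\sum_{K\in m^{\star}\setminus\mathcal C}\int_{I_K}\varphi$, and each $\int_{I_K}\varphi$ must be bounded. Your use of $\varphi\le\kappa$ is unavoidable here (the $I_K$'s for distinct $K$ can repeat, so the sum is not bounded by $1$ without a pointwise control on $\varphi$), and it yields a bound of order $\tfrac{\kappa}{2}\,2^{-\ell d}|m^{\star}|\le\tfrac{\kappa}{2}\,|m^{\star}|\log^3n/n$. Carrying this through, the best constant the argument produces is of the form $2\max\{2,\,1+\kappa/2\}$, not literally $4$ unless $\kappa\le 2$. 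So the stated constant $4$ is an oversight in the Claim as written; it should be $\kappa$-dependent. This does not undermine the paper, since the Claim is only used inside bounds whose leading constants are already allowed to depend on $\kappa$, and your explicit tracking of the $\kappa$ factor is the honest version of the statement.
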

By using this claim and Theorem~1 of~\cite{Akakpo2012}, 
\begin{eqnarray}  \label{equationvitessedeterministe}
C \E \left[h^2 \left(s\1_A,\hat{s}_{\hat{m}} \right) \right] \leq   \left|\restriction{\sqrt{s}}{A} \right|_{p,\sigma}^{\frac{2 d}{d + \sigma}} \left(\frac{\log^3 n}{n}  \right)^{\frac{ \sigma}{\sigma + d}} +  \frac{\log^3 n}{n} + \frac{R_n (\ell)}{n}
\end{eqnarray}
and by using  Theorem~2 of~\cite{Akakpo2012},
\begin{eqnarray*}
C \E \left[h^2 \left(s\1_A,\hat{s}_{\hat{m}} \right) \right]  \leq   \left|\restriction{\sqrt{s}}{A} \right|_{p,\sigma}^{\frac{2 d}{d + \sigma}} \left(\frac{\log n}{n} + 2^{-2 \ell d \theta} \right)^{\frac{ \sigma}{\sigma + d}} +  \frac{\log n}{n} + \frac{R_n (\ell)}{n}
\end{eqnarray*}
where $C > 0$ depends only on $\kappa$,$\sigma$,$d$,$p$ and  where  $$\theta = \frac{d +  \sigma}{\sigma} \left( \frac{\sigma}{d} - 2 \left(\frac{1}{p} - \frac{1}{2} \right)_+ \right).$$
If $\sigma > \sigma_1 (p,d)$ then $\theta > 1/2$. There exits thus $n_0$ (depending only on $\theta$), such that if $n \geq n_0$, $2^{-2 \ell \theta d} \leq \log n / n$, and hence
\begin{eqnarray*}
C' \E \left[h^2 \left(s\1_A,\hat{s}_{\hat{m}} \right) \right]\leq   \left|\restriction{\sqrt{s}}{A} \right|_{p,\sigma}^{\frac{2 d}{d + \sigma}} \left(\frac{\log n}{n}  \right)^{\frac{ \sigma}{\sigma + d}} +  \frac{\log n}{n} + \frac{R_n (\ell)}{n}.
\end{eqnarray*}
If $n \leq n_0$, we deduce from~{(\ref{equationvitessedeterministe})},
\begin{eqnarray*}
C \E \left[h^2 \left(s\1_A,\hat{s}_{\hat{m}} \right) \right]  &\leq&   \left|\restriction{\sqrt{s}}{A} \right|_{p,\sigma}^{\frac{2 d}{d + \sigma}} \left(\frac{\log^3 n_0}{n}  \right)^{\frac{ \sigma}{\sigma + d}} +  \frac{\log^3 n_0}{n} + \frac{R_n (\ell)}{n} \\
&\leq& C'' \left[ \left|\restriction{\sqrt{s}}{A} \right|_{p,\sigma}^{\frac{2 d}{d + \sigma}} \left(\frac{\log n}{n}  \right)^{\frac{ \sigma}{\sigma + d}} +  \frac{\log n}{n} + \frac{R_n (\ell)}{n} \right]
\end{eqnarray*}
where $C''$ depends only on $\sigma,d,p$. 
The conclusion ensues from the fact that $R_n (\ell)$ is upper-bounded by a constant depending only on $\kappa_0, b_1$.
\qed

\subsection{Proof of Proposition~\ref{ThmGeneral}.}~\label{ProofThemSelectGeneral}
We shall use the following lemma  whose proof is similar to the one of Lemma~\ref{LemmeProprietesDuTest}.
\begin{lemme} \label{LemmeProprietesDuTest2}
Set $\varepsilon = (2  + 3 \sqrt{2})/8$.
Under assumptions of Proposition~\ref{ThmGeneral}, there exists an universal constant $L_0 > 0$ such that for all  $L \geq L_0$ and $\xi > 0$,
\begin{eqnarray*} 
\forall f,f' \in S, \quad \left(1 - \varepsilon \right) H^2 \left(s\1_A, f'  \right) + T\left(f,f'  \right) \leq\left(1 + \varepsilon \right) H^2 \left(s\1_A, f \right) + L \frac{\Delta_S(f) + \Delta_S(f')}{n} + 22 \xi
\end{eqnarray*}
 with probability larger than $1 - e^{-n \xi}$.
\end{lemme}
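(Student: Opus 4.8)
The plan is to mimic the proof of Lemma~\ref{LemmeProprietesDuTest}, but in the simpler setting where the index set $S$ already carries the summable weights $\Delta_S$, so that no covering-number bound for a family of models is needed. Throughout, all elements of $S$ vanish off $A$, which is what will make the conditional-expectation identities below clean.

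First I would reuse, verbatim, the Claim from that earlier proof: with $\psi(x,y)=\frac1{\sqrt2}\frac{\sqrt y-\sqrt x}{\sqrt{x+y}}$ and with $Z(f,f')$ the centred average defined there, one has for all $f,f'\in S$ both
\begin{equation*}
\Bigl(1-\tfrac1{\sqrt2}\Bigr)H^2(s\1_A,f')+T(f,f')\le\Bigl(1+\tfrac1{\sqrt2}\Bigr)H^2(s\1_A,f)+Z(f,f')
\end{equation*}
(relation \eqref{RelationTEtZ}) and the variance control \eqref{RelationTEtZ2}. These hold for arbitrary non-negative integrable functions supported in $A$, so in particular for every pair in $S\times S$; from this point on it is enough to bound $Z(f,f')$ uniformly over $S\times S$.

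Next, for a fixed pair $f,f'\in S$, I would apply the Bernstein-type inequality of Claim~\ref{ClaimConcentrationInequality} to the sum $S_n=nZ(f,f')$, with the (deterministic-in-$i$) function $f_i=\psi(f,f')$ on $\XX^2$ and $b=1/\sqrt2$, the bound $|\psi|\le1/\sqrt2$ coming from $(\sqrt y-\sqrt x)^2\le x+y$. Since $(X_i)$ is Markov with transition density $s$ and the $f_i$ vanish off $A$, the predictable quadratic variation $V_n=\sum_{i=0}^{n-1}\E[f_i^2(X_i,X_{i+1})\mid\mathcal F_i]$ is controlled by \eqref{RelationTEtZ2}, giving $V_n\le3n\bigl(H^2(s\1_A,f)+H^2(s\1_A,f')\bigr)$. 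Setting $z=(1-1/\sqrt2)/4$ and $\beta=(3/z+\sqrt2)/2$, one checks $2(\beta-b)=3/z$, so Claim~\ref{ClaimConcentrationInequality} yields, for every $x>0$,
\begin{equation*}
\P\Bigl[Z(f,f')\ge z\bigl(H^2(s\1_A,f)+H^2(s\1_A,f')\bigr)+\beta\tfrac xn\Bigr]\le e^{-x}.
\end{equation*}

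Finally I would choose $x=\frac L\beta\bigl(\Delta_S(f)+\Delta_S(f')\bigr)+n\xi$, so that $\beta x/n=L\frac{\Delta_S(f)+\Delta_S(f')}{n}+\beta\xi$, and union-bound over $(f,f')\in S\times S$. Taking $L_0:=\beta$ (note $\beta<22$) and any $L\ge L_0$, the hypothesis $\Delta_S\ge1$ gives $e^{-\frac L\beta\Delta_S(f)}\le e^{-\Delta_S(f)}$, hence $\sum_{f,f'\in S}e^{-\frac L\beta(\Delta_S(f)+\Delta_S(f'))}\le\bigl(\sum_{f\in S}e^{-\Delta_S(f)}\bigr)^2\le1$ by assumption, so the event on which $Z(f,f')\ge z(H^2(s\1_A,f)+H^2(s\1_A,f'))+L\frac{\Delta_S(f)+\Delta_S(f')}{n}+\beta\xi$ for some $f,f'\in S$ has probability at most $e^{-n\xi}$. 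On its complement, plugging the bound on $Z(f,f')$ into \eqref{RelationTEtZ} and using $1/\sqrt2+z=\varepsilon=(2+3\sqrt2)/8$ together with $\beta\le22$ gives exactly the asserted inequality. I do not expect a genuine obstacle here: the only step requiring care is the identification of $V_n$ with the quantity handled by \eqref{RelationTEtZ2} (i.e.\ that the conditioning against $s$ and the support condition line up), plus verifying the arithmetic identities relating $z$, $\beta$ and $\varepsilon$; everything else is routine bookkeeping of constants, and the argument is strictly simpler than that of Lemma~\ref{LemmeProprietesDuTest}.
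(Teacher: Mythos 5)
Your proposal is correct and follows essentially the same route as the paper, which proves this lemma by repeating the argument of Lemma~\ref{LemmeProprietesDuTest}: the variational bound \eqref{RelationTEtZ} and variance control \eqref{RelationTEtZ2} applied to $\psi(f,f')$, the martingale Bernstein inequality of Claim~\ref{ClaimConcentrationInequality} with $b=1/\sqrt{2}$, and a union bound in which the model-complexity terms $|m|+\log|S_m|$ are simply replaced by the weights $\Delta_S(f)+\Delta_S(f')$, summable by hypothesis. Your bookkeeping of the constants ($z=(1-1/\sqrt{2})/4$, $\beta=(3/z+\sqrt{2})/2\leq 22$, $\varepsilon=1/\sqrt{2}+z$) matches the paper's, and your remark that the support condition on $S$ makes $Z(f,f')$ exactly the centred sum handled by the concentration claim is precisely the point that needs checking.
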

\begin{proof}[Proof of Proposition~\ref{ThmGeneral}.]
By using the lemma above, with probability larger than $1 -  e^{-n \xi}$, for all $f \in S$,
\begin{eqnarray*}
\sup_{\substack{f' \in S}} \left\{\left(1 - \varepsilon \right) H^2 \left(s\1_A, f'  \right) + T\left(f,f'  \right)   -     L \frac{\Delta_S(f')}{n} \right\} \leq\left(1 + \varepsilon \right) H^2 \left(s\1_A, f \right) +   L \frac{\Delta_S(f)}{n}  + 22 \xi.
\end{eqnarray*}
Thus, if $ T(f,\hat{f} ) +   L  \frac{\Delta_S(f)}{n}-    L  \frac{\Delta_S(\hat{f})}{n} \geq 0$, 
\begin{eqnarray*}
\alpha H^2 (s\1_A, \hat{f} ) &\leq& (1 - \varepsilon) H^2 (s\1_A, \hat{f}) +   T (f,\hat{f} ) -   L  \frac{\Delta_S(\hat{f})}{n} +     L  \frac{\Delta_S(f)}{n} \\
&\leq& \left(1 + \varepsilon \right) H^2 \left(s\1_A,f \right)  +  2 L  \frac{\Delta_S(f)}{n}+  22 \xi.
\end{eqnarray*}
If $ T(f,\hat{f} ) +   L  \frac{\Delta_S(f)}{n}-    L  \frac{\Delta_S(\hat{f})}{n} < 0$, 
\begin{eqnarray*}
\alpha H^2 (f, \hat{f} ) &\leq& \alpha  H^2 (\hat{f},f ) +   T (\hat{f}, f)  - L  \frac{\Delta_S(f)}{n} +  L  \frac{\Delta_S(\hat{f})}{n}  \\
&\leq&  \sup_{f' \in S} \left\{\alpha H^2 ( \hat{f}, f') +   T(\hat{f}, f')  -     L  \frac{\Delta_S(f')}{n}  \right\}  +    L  \frac{\Delta_S(\hat{f})}{n} \\
&\leq& \wp (\hat{f}) \\
&\leq& \wp(f) + \frac{1}{n} \\
&\leq&  \sup_{f' \in S}  \left\{  \alpha H^2 \left(f,  f' \right) +  T \left(f, f'\right) -     L  \frac{\Delta_S(f')}{n}   \right\} + L \frac{\Delta_S(f)}{n} + \frac{1}{n}. 
\end{eqnarray*}
With $\upsilon = (1 - \varepsilon)/ \alpha - 1 > 0$,
\begin{eqnarray*}
\alpha H^2 (f, \hat{f} )  &\leq&  \left(1 + \upsilon^{-1} \right)  H^2 \left(f,  s \1_A \right)   \\ 
& &  \qquad + \sup_{ f' \in S}  \left\{ (1  - \varepsilon)   H^2 \left(s\1_A,  f' \right) +    T \left(f, f'\right) -    L \frac{\Delta_S(f')}{n}    \right\} + L \frac{\Delta_S(f)}{n} + \frac{1}{n}  \\
&\leq&    \left(1 + \upsilon^{-1} \right)  H^2 \left(f,  s \1_A \right)  +  \left[ \left(1 + \varepsilon \right) H^2 \left(s\1_A, f \right) +    L \frac{\Delta_S(f)}{n} +  22 \xi \right]  +  L \frac{\Delta_S(f)}{n}   +  \frac{1}{n} \\
&\leq&   \left(2 + \varepsilon + \upsilon^{-1} \right) H^2 \left(f,  s \1_A \right)   + 2 L \frac{\Delta_S(f)}{n}  +  22 \xi +  \frac{1}{n}.
\end{eqnarray*}
This leads to,
\begin{eqnarray*}
\alpha H^2 (s\1_A, \hat{f} )  &\leq& 2 \alpha  H^2 \left(s\1_A, f \right) + 2 \alpha H^2 (f, \hat{f}) \\
&\leq& 2 \left(2  + \alpha +  \varepsilon + \upsilon^{-1} \right)  H^2 \left(f,  s \1_A \right)  + 4 L \frac{\Delta_S(f)}{n} + 44 \xi  + \frac{2 }{n}.
\end{eqnarray*}
Finally, we have proved that there exists $C > 0$, such that,  with probability larger than $1 -   e^{-n \xi}$, for all $f \in S$,
\begin{eqnarray*}
C H^2 (s\1_A, \hat{f} )  \leq    H^2 \left(f,  s \1_A\right)  +  L \frac{\Delta_S(f)}{n} + \xi.
\end{eqnarray*}
The conclusion follows.
\end{proof}

\subsection{Proof of Corollary~\ref{CorVitesseAutoRegressif}.} \label{SectionPreuveOuDefinitNormeSup}
Throughout this proof, the  distance associated to the supremum norm $\|\cdot\|_{\infty}$ is denoted by $d_{\infty}$.
We shall use the following lemma (the first part may be deduced from the work of~\cite{Akakpo2012} whereas the second part may be deduced from results in~\cite{Dahmen1980}).
\begin{lemme} \label{CollectionV}
There exists a collection  $\W$ of (finite dimensional) linear spaces such that for all $p \in (0,+\infty]$,  $\beta >  (1/p - 1/2)_+$ and $f \in \mathscr{B}^{\beta} (\L^p ([0,1]))$, $L > 0$, $\tau > 0$, $\sigma > 0$,
$$C \inf_{W \in \W} \left\{ L^2 d_2^{2 \sigma} (g, W) + (\dim W) \tau \right\}    
\leq  \left(L |g|_{p,\beta}^{\sigma}\right)^{\frac{2}{2 \sigma \beta + 1}} \tau^{\frac{2 \sigma \beta }{2 \sigma \beta + 1}}  + \tau  $$
where $C > 0$ depends only on $p$, $\beta$.
Moreover, for all $\beta > 0$, $f \in \mathcal{H}^{\beta} ([0,1])$, $L > 0$, $\tau > 0$, $\sigma > 0$,
$$C' \inf_{W \in \W} \left\{ L^2 d_{\infty}^{2 \sigma} (g, W) + (\dim W) \tau \right\}    
\leq  \left(L |g|_{\infty,\beta}^{\sigma}\right)^{\frac{2}{2 \sigma \beta + 1}} \tau^{\frac{2 \sigma \beta }{2 \sigma \beta + 1}}  + \tau$$
where $C' > 0$ depends only on $\beta$.
\end{lemme}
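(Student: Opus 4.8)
The plan is to read off the collection $\W$ from the nonlinear approximation theory of piecewise polynomials and then perform the usual bias--variance optimisation over the dimension of the approximating space; the only delicate point is to make sure the resulting constant does not depend on $\sigma$. I would take $\W$ to be the family of piecewise polynomial spaces of all degrees built on the partitions of $[0,1]$ produced by the adaptive algorithm of \cite{DeVore1990}. The approximation properties of this family are exactly what is quantified in \cite{Akakpo2012} (for $d_2$) and in \cite{Dahmen1980} (for $d_\infty$), and from those references I would record a single Jackson-type estimate: there are constants $a = a(\beta)$ and $b = b(p,\beta)$ such that for every $D \in \N^{\star}$ and every $g \in \mathscr{B}^{\beta}(\L^p([0,1]))$ there exists $W_D \in \W$ with $\dim W_D \le a D$ and $d_2(g, W_D) \le b\,|g|_{p,\beta}\,D^{-\beta}$, and likewise $d_\infty(g, W_D) \le b'\,|g|_{\infty,\beta}\,D^{-\beta}$ with $b' = b'(\beta)$ when $g \in \mathcal{H}^{\beta}([0,1])$. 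The two assertions of the lemma being proved in the same way, I would only treat the first.

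Fix $g$, $L$, $\tau$, $\sigma$ and set $q = 2\sigma\beta > 0$. Using the estimate above, for every integer $D \ge 1$,
\[
L^2 d_2^{2\sigma}(g, W_D) + (\dim W_D)\,\tau \le \psi(D) := A D^{-q} + B D, \qquad A = L^2 b^{2\sigma}|g|_{p,\beta}^{2\sigma}, \quad B = a\tau.
\]
The function $\psi$ attains its minimum on $(0,\infty)$ at $D_0 = (qA/B)^{1/(q+1)}$, with $\psi(D_0) = \bigl(q^{-q/(q+1)} + q^{1/(q+1)}\bigr) A^{1/(q+1)} B^{q/(q+1)}$. If $D_0 \ge 1$ I would choose the integer $D = \lceil D_0 \rceil$ and use $\psi(\lceil D_0 \rceil) \le \psi(D_0) + B \le 2\psi(D_0)$. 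If $D_0 < 1$, then $\psi$ is non-decreasing on $[1,\infty)$, so the integer minimum is $\psi(1) = A + B$; here the inequality $qA < B$ (equivalent to $D_0 < 1$) reads $v < a\tau/(q b^{2\sigma})$ for $v := (L|g|_{p,\beta}^{\sigma})^{2}$, and writing $A = b^{2\sigma}\,v^{1/(q+1)}\,v^{q/(q+1)}$ and bounding the last factor yields $A < b^{2\sigma/(q+1)} a^{q/(q+1)} q^{-q/(q+1)}\,v^{1/(q+1)}\tau^{q/(q+1)}$. Since $A^{1/(q+1)}B^{q/(q+1)} = b^{2\sigma/(q+1)}a^{q/(q+1)}\bigl(L|g|_{p,\beta}^{\sigma}\bigr)^{2/(q+1)}\tau^{q/(q+1)}$ and $q+1 = 2\sigma\beta + 1$, both cases give
\[
\inf_{W \in \W}\bigl\{ L^2 d_2^{2\sigma}(g,W) + (\dim W)\tau \bigr\} \le \Gamma\,\bigl(L|g|_{p,\beta}^{\sigma}\bigr)^{2/(2\sigma\beta+1)}\,\tau^{2\sigma\beta/(2\sigma\beta+1)} + a\,\tau
\]
for some $\Gamma = \Gamma(p,\beta,\sigma)$.

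The last step is to check that $\Gamma$ can be bounded by a constant depending only on $p$ and $\beta$, and this is the point that requires care. One observes that $t \mapsto t^{-t/(t+1)} + t^{1/(t+1)}$ is bounded on $(0,\infty)$ (each term tends to $1$ as $t \to 0$ and as $t \to \infty$), while the exponents $2\sigma/(2\sigma\beta+1)$ and $2\sigma\beta/(2\sigma\beta+1)$ are non-decreasing in $\sigma$ and stay in $[0,1/\beta)$ and $[0,1)$ respectively, so $b^{2\sigma/(2\sigma\beta+1)} \le \max(1, b^{1/\beta})$ and $a^{2\sigma\beta/(2\sigma\beta+1)} \le \max(1, a)$. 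Hence $\Gamma \le C_0(p,\beta)$, and taking $C = 1/\max\bigl(C_0(p,\beta), a\bigr)$ gives the first inequality; the second is obtained verbatim with $b'$ replacing $b$, so that $C'$ depends only on $\beta$. The main obstacle is precisely this $\sigma$-uniformity in the degenerate regime $D_0 < 1$: the crude bound $\psi(1) \le B(q+1)/q$ would leave the spurious factor $1/(2\sigma\beta)$, and one must instead spend the extra $\tau$ on the right-hand side through the Young-type splitting of $v$ above; a secondary point is to extract from \cite{Akakpo2012} and \cite{Dahmen1980} a single family $\W$ enjoying simultaneously the $\L^2$--Besov and the $\L^\infty$--Hölder Jackson inequalities at every order of smoothness.
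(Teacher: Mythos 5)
Your proof is essentially correct, and it takes the only natural route: since the paper gives no proof for this lemma (it only cites \cite{Akakpo2012} for the $d_2$ part and \cite{Dahmen1980} for the $d_\infty$ part), you have filled in the standard bias--variance optimisation over the dimension $D$, with the crucial observation that the constant must be controlled uniformly in $\sigma$, which is exactly where the degenerate regime $D_0<1$ needs the Young-type splitting of $v$ rather than the crude bound $\psi(1)\leq B(q+1)/q$. The two-case analysis and the final reduction of the exponents to the ranges $[0,1/\beta)$ and $[0,1)$ are all sound.

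One small slip in a parenthetical remark: you write that in $t\mapsto t^{-t/(t+1)}+t^{1/(t+1)}$ ``each term tends to $1$ as $t\to0$ and as $t\to\infty$.'' In fact $t^{-t/(t+1)}\to1$ as $t\to0^+$ but $\to0$ as $t\to\infty$, and $t^{1/(t+1)}\to0$ as $t\to0^+$ but $\to1$ as $t\to\infty$; it is the \emph{sum} that tends to $1$ at both ends. The conclusion --- boundedness on $(0,\infty)$ --- is still correct (one can check $t^{-t/(t+1)}\leq e^{1/e}$ and $t^{1/(t+1)}\leq e^{1/e}$ for all $t>0$), so nothing downstream is affected, but the justification as stated is inaccurate and should be rephrased.
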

Let us define 
$$u(x,y) = \frac{y - g(x)}{1 + \|g\|_{\infty}} \quad \text{and} \quad \Phi(x) = \phi \left(  (1+\|g\|_{\infty}) x  \right) \quad \text{for all $x,y \in [0,1]$.}$$
Let $\W$ be the family of linear spaces given by the lemma above. Define, for all $W \in \W$, the linear space
$$T_W = \{(x,y) \mapsto a (y - f(x)), a \in \R ,\, f \in W\}$$
and $\TT = \{T_W, \, W \in \W \}$. Since $\Phi$ belongs to  $\mathcal{H}^{\sigma} ([0,1])$, we deduce from Corollary 1 of~\cite{BaraudComposite} and  from our Theorem~\ref{ThmSelectGeneral} that there exists an estimator $\hat{s}$ such that
\begin{eqnarray*}
C \E \left[H^2 \left(s, \hat{s} \right) \right] \leq \inf_{T \in \TT} \left\{|\Phi|_{\infty,\sigma \wedge 1}^2 d_2^{2 (\sigma \wedge 1)} (u, T) + (\dim T) \tau_n  \right\} +  \inf_{W \in \W} \left\{ d_{\infty}^{2} (\Phi, W) + (\dim W)\frac{\log n}{n}\right\} 
\end{eqnarray*}
where $C > 0$ depends on $\sigma, \kappa$ and where
$$ \tau_n  = \left(\log n \vee \log \left(|\Phi|_{\infty,\sigma \wedge 1} \right) \right) \frac{\log n}{n}.$$
Now,
\begin{eqnarray*}
 \inf_{T \in \TT} \left\{|\Phi|_{\infty,\sigma \wedge 1}^2 d_2^{2 (\sigma \wedge 1)} (u, T) + (\dim T)  \tau_n \right\}  \leq \inf_{W \in \W} \left\{ |\phi|_{\infty,\sigma \wedge 1}^2 d_2^{2 (\sigma \wedge 1)} (g, W) + (\dim W + 1) \tau_n \right\}    
\end{eqnarray*}
and the conclusion follows from the lemma above. \qed

\subsection{Proof of Lemma~\ref{LemmeRegulariteAutoRegressif}.}
The first part of the lemma may be deduced from Proposition~4 of~\cite{BaraudComposite}.
For the second part, we shall build $\phi' \in \mathcal{H}^{\sigma} (\R)$ such that $\restriction{\phi'}{[0,1]} \not \in  \cup_{b > \sigma} \mathcal{H}^{b} ([0,1])$ and $g' \in \mathcal{H}^{\beta} ([0,1])$ such that $g'(0) = 0$ and
$$\phi' \circ g' \in  \mathcal{H}^{ \theta (\beta,\sigma)} ([0,1]) \setminus \cup_{b >  \theta (\beta,\sigma)} \mathcal{H}^{b}  ([0,1]).$$
By setting $\phi = \phi'$ and $g = -g'$, the function $f$ defined by
$$f(x,y) = \phi' \left(y - (-g' (x))\right) \quad \text{for all $x,y \in [0,1]$},$$
is suitable since $f(x,0) = \phi'\circ g' (x)$ and $f(0,y) = \phi'(y)$.

If $\sigma, \beta \leq 1$, we can choose $\phi'(x) = x^{\sigma}$ on $[0,1]$ and $g'(x) = x^{\beta}$. If  $\beta \geq \sigma \vee 1$, then choose $\phi' \in \mathcal{H}^{\sigma} (\R)$ such that $\restriction{\phi'}{[0,1]} \not \in  \cup_{b > \sigma} \mathcal{H}^{b}  ([0,1])$ and  $g'(x) = x$. If now, $\sigma \geq \beta \vee 1$, we choose  $\phi' \in \mathcal{H}^{\sigma} (\R)$ such that $\restriction{\phi'}{[0,1]} \not \in  \cup_{b > \sigma} \mathcal{H}^{b} ([0,1])$ and such that $\phi' (x) = x$ for all $x \in [0,1/2]$. We then consider $\zeta \in \mathcal{H}^{\beta} ([0,1]) \setminus \cup_{b >  \beta} \mathcal{H}^{b}   ([0,1])$  and $g'(x) =  (\zeta(x) - \zeta(0))/ (2 \sup_{y \in [0,1]} |\zeta(y) - \zeta(0)|)$. \qed

\subsection{Proof of Corollary~\ref{CorVitesseARCH}.}
Throughout this proof, $d_{\infty}$ stands for the distance associated to the supremum norm $\|\cdot\|_{\infty}$.
Let us define 
\begin{eqnarray*}
\forall x,y,z \in [0,1], \quad & & u(x,y) = (u_1(x,y),u_2(x,y),u_3(x,y)) =  \left(\frac{y - v_1 (x)}{1 + \|v_1\|_{\infty}}, \frac{v_2(x)}{ \|v_2\|_{\infty} }, \ \frac{v_3(x)}{ \|v_3\|_{\infty} }\right) \\
& &  \Phi(x,y,z) = \|v_3\|_{\infty} z \varphi\left((1 + \|v_1\|_{\infty}) \|v_2\|_{\infty}  x y \right).
\end{eqnarray*}
Let $\W$ be the family of linear spaces given by Lemma~\ref{CollectionV}. Define, for all $W \in \W$ the linear spaces
$$T_W = \left\{(x,y) \mapsto a (y - f(x)), \, a \in \R, \, f \in W \right\} \quad \text{and} \quad F_W = \left\{(x,y,z) \mapsto z f(x y), \, f \in W \right\}$$
and set  $\TT_1 = \{T_W, \, W \in \W\}$, $\TT_2 = \W$, $\TT_3 = \W$, $\mathbb{F} = \{F_W, \, W \in \W\}$.

It ensues from Corollary 1 of~\cite{BaraudComposite} and our Theorem~\ref{ThmSelectGeneral} that there exists an estimator $\hat{s}$ such that
\begin{eqnarray*}
C \E \left[H^2 \left(s, \hat{s} \right) \right] &\leq& \inf_{T \in \TT_1} \left\{\|v_3\|^2_{\infty} (1 + \|v_1\|_{\infty})^{2 (\sigma \wedge 1)} \|v_2\|_{\infty}^{2 (\sigma \wedge 1)}  |\varphi|_{\infty,\sigma}^2  d_2^{2 (\sigma \wedge 1)} (u_1, T) + (\dim T)   \tau_n^{(1)}  \right\} \\
& &  \!\!\!\! + \inf_{T \in \TT_2} \left\{\|v_3\|^2_{\infty} (1 + \|v_1\|_{\infty})^{2 (\sigma \wedge 1)}  \|v_2\|_{\infty}^{2 (\sigma \wedge 1)} |\varphi|_{\infty,\sigma}^2 d_2^{2 (1 \wedge \sigma)} (u_2, T) + (\dim T)   \tau_n^{(2)}  \right\}  \\
& &  \!\!\!\! + \inf_{T \in \TT_3} \left\{\|v_3\|_{\infty}^2 \|\varphi\|_{\infty}^2 d_2^{2} (u_3, T) + (\dim T)   \tau_n^{(3)}  \right\}  \\
& &  \!\!\!\! + \inf_{F\in \mathbb{F}} \left\{ d_{\infty}^{2} (\Phi, F) + (\dim F) \frac{\log n}{n} \right\}
\end{eqnarray*}
where
\begin{eqnarray*}
 \tau_n^{(1)}   &=& \left(\log n \vee \log \left(\|v_3\|^2_{\infty} (1 + \|v_1\|_{\infty})^{2 (\sigma \wedge 1)} |\varphi|_{\infty,\sigma}^2 \|v_2\|_{\infty}^{2 (\sigma \wedge 1)} \right)  \right) \frac{\log n}{n} \\
  \tau_n^{(2)}  &=& \left(\log n \vee \log \left(\|v_3\|^2_{\infty} (1 + \|v_1\|_{\infty})^{2 (\sigma \wedge 1)} |\varphi|_{\infty,\sigma}^2 \|v_2\|_{\infty}^{2 (\sigma \wedge 1)} \right)  \right) \frac{\log n}{n}  \\
    \tau_n^{(3)}  &=& \left(\log n \vee \log \left(\|v_3\|_{\infty}^2 \|\varphi\|_{\infty}^2 \right) \right) \frac{\log n}{n}.
\end{eqnarray*}
Hence,
\begin{eqnarray*}
C' \E \left[H^2 \left(s, \hat{s} \right) \right] &\leq&    \inf_{W \in \W} \left\{\|v_3\|^2_{\infty}|\varphi|_{\infty,\sigma}^2 \|v_2\|_{\infty}^{2 (\sigma \wedge 1)}   d_2^{2 (\sigma \wedge 1)} (v_1, W) + (\dim W)   \tau_n^{(1)}  \right\} \\
& & \quad  + \inf_{W \in \W} \left\{\|v_3\|^2_{\infty} (1 + \|v_1\|_{\infty})^{2 (\sigma \wedge 1)}   |\varphi|_{\infty,\sigma}^2  d_2^{2 (1 \wedge \sigma)} (v_2, W) + (\dim W)   \tau_n^{(2)}  \right\}  \\
& & \quad  + \inf_{W\in \mathbb{W}} \left\{\|\varphi\|_{\infty}^2  d_2^{2 (1 \wedge \sigma)} (v_3, W) + (\dim W)  \tau_n^{(3)}  \right\} \\
& & \quad  + \inf_{W\in \mathbb{W}} \left\{\|v_3\|_{\infty}^{2 } d_{\infty}^{2} (\varphi, W) + (\dim W) \frac{\log n}{n} \right\}.
\end{eqnarray*}
Calculating these minimums via Lemma~\ref{CollectionV} leads to the result. \qed

\subsection{Proof of Lemma~\ref{LemmeVitesseArch}.}
The first part of the lemma can be deduced from Proposition~4 of~\cite{BaraudComposite}.  For the second part, remark that, as in the proof of Lemma~\ref{LemmeRegulariteAutoRegressif} the problem amounts to finding
$\phi' \in \mathcal{H}^{\sigma}  (\R)$ with $\restriction{\phi'}{[0,1]} \not \in \cup_{a >  \sigma} \mathcal{H}^{a}  (\R)$, $v_i' \in  \mathcal{H}^{\beta_i}  ([0,1])$ for $i \in \{1,2\}$, $v_1'(0) = 0$, $v_2'(0) = 1$
such that
$$\sqrt{v'_2} \, \phi' (v_1' v_2') \in  \mathcal{H}^{ \theta (\beta_1,\beta_2,\sigma)}  ([0,1]) \setminus \bigcup_{b > \theta (\beta_1,\beta_2,\sigma)} \mathcal{H}^{b}   ([0,1]).$$

If $\theta (\beta_1,\beta_2,\sigma) = 2^{-1} (\beta_2 \wedge 1)$,   choose  $v_2'(x) = (1-x)^{1 \wedge \beta_2}$ and take $\phi'$ as being any function of $\mathcal{H}^{\sigma} (\R)$ such that $\restriction{\phi'}{[0,1]} \not \in \cup_{a >  \sigma} \mathcal{H}^{a}   (\R)$ and such that $\phi'(0) = 1$.
If $\theta (\beta_1,\beta_2,\sigma) = \sigma$, choose $v_1'(x) = 2 (\sqrt{1 + x} - 1)$, $v_2'(x) = 1/2 (\sqrt{1+x} + 1)$
and take $\phi'$ as being any function of $\mathcal{H}^{\sigma}  (\R)$ such that $\restriction{\phi'}{[0,1]} \not \in \cup_{a >  \sigma} \mathcal{H}^{a} (\R)$.
If  $\theta (\beta_1,\beta_2,\sigma) = \sigma \beta_1$, we may assume that $\sigma \leq 1$ and $\beta_1 \leq 1$. We can then choose $v_1'(x) = x^{\beta_1}$, $v_2'(x) = 1$ and $\phi'(x) =  x^{\sigma}$ for $x \in [0,1]$.
If  $\theta (\beta_1,\beta_2,\sigma) = \sigma \beta_2$, we may assume that $\sigma \leq 1$ and $\beta_2 \leq 1$ and choose $v_1'(x) = 1$ for $x \in [1/2,1]$, $v_2'(x) = 1 - (1-x)^{\beta_2}$ for $x \in [1/2,1]$ and $\phi'(x) =  (1-x)^{\sigma}$ for $x \in [0,1]$.
 Finally, if $\theta (\beta_1,\beta_2,\sigma) = \beta_1$, we may assume that $\beta_1 \leq 1$.  We can then choose $v_1'(x) = x^{\beta_1}$, $v_2'(x) = (1-x)^{1 \wedge \beta_2}$   and $\phi'$ such that $\phi'(x) = x$ for $x \in [0,1/2]$.
\qed

\subsection{Proof of Proposition~\ref{PropositionComplexity}.}
We proceed in 3 steps.
\begin{enumerate} [Step 1.]
\item We associate to each cube $K \in \cup_{m \in \M_{\ell}} m$, a place in the computer's memory. Then, for each $i \in \{1,\dots,n\}$ we determine the sets $K \in \cup_{m \in \M_{\ell}} m$  such that $ \1_{K} (X_i,X_{i+1}) > 0$.
There are at most $\ell$ such sets. This permits to store  all the  $\sum_{i=0}^{n-1} \1_{K} (X_i,X_{i+1})$ in  around $\mathcal{O} (n \ell d)$  operations.
Let for all $K \in \cup_{m \in \M_{\ell}} m$, $I_K$ and $J_K$ be the subsets of $[0,1]^d$ such that $K = I_K \times J_K$. 
We can store all the $\mu (J_K)$ in $\mathcal{O} (4^{\ell d})$ operations and all the  $\sum_{i=0}^{n-1} \1_{I_K} (X_i)$ in $\mathcal{O} (n \ell d)$ operations.
This  permits us to store quickly
$$\sum_{i=0}^{n-1} \1_{K} (X_i,X_{i+1}) \quad \text{and} \quad \sum_{i=0}^{n-1} \int_{[0,1]^{d}} \1_K (X_i,x) \d \mu (x)$$ 
for all $K \in \cup_{m \in \M_{\ell}} m$.  These values have to be calculated to know the $F_K (K')$ and thus to use the  algorithm presented in Section~\ref{SectionAlgorithm}. 
\item For each $K \in \cup_{m \in \M_{\ell}} m$, we use the algorithm of Section~\ref{SectionAlgorithm} to design $m'_K$. Let us denote by $j \in \{0,\dots,\ell\}$ the smallest integer such that $K \in \mathcal{K}_j$ where $\mathcal{K}_j$ is defined in Section~\ref{SectionChoixM}. 
\begin{itemize}
\item To find $m'_K$, we  begin to compute $\mathcal{E} (T^{\star} (K''))$ for all $K'' \in \cup_{m \in \M_{\ell} \setminus \M_{\ell-1}} m$ such that $K'' \cap K \neq \emptyset$.  The complexity of this is around the number of such sets, \emph{i.e},  $4^{(\ell - j) d}$.
\item Next, thanks to relation~{(\ref{RelationSomme})} we compute  $\mathcal{E} (T^{\star} (K''))$ for all $K'' \in \cup_{m \in \M_{\ell-1} \setminus \M_{\ell - 2}} m$ such that $K'' \cap K \neq \emptyset$.  There are $4^{(\ell - j-1) d}$ such sets. The complexity of this operation is thus $4^d \times 4^{(\ell - j-1) d}$.
\item By recurrence, we compute  $\mathcal{E} (T^{\star} (K''))$ for all $K'' \in \cup_{m \in \M_{\ell} \setminus \M_{j}} m$  such that $K'' \cap K \neq \emptyset$ in at most
$$ 4^{(\ell - j) d} +  4^d \times  \sum_{k=1}^{\ell - j  -1} 4^{k d} \leq 3 \times 4^{(\ell - j) d}$$
operations.
\item We get then $\mathcal{E} (T^{\star} ([0,1]^d))$ in $4^d j$ additional operations. 
\end{itemize}
We apply this algorithm for all $K \in \cup_{m \in \M_{\ell}} m$. When $K \in \mathcal{K}_j$, computing $m'_K$ requires  thus $\mathcal{O} \left(4^{(\ell - j) d} + 4^d j \right)$ operations. Since $|\mathcal{K}_j| = 4^{j d}$, computing all the $m'_K$ requires finally 
$$\sum_{j=0}^{\ell}  4^{j d} \left(4^{(\ell - j) d} + 4^d j \right) = \mathcal{O} \left(\ell 4^{(\ell + 1) d} \right) $$
operations.
\item Now, by slightly modifying the algorithm, we can compute~{(\ref{relationgamma2})} in $\mathcal{O} \left(4^{(\ell + 1) d}\right)$ operations. 
\end{enumerate}
\qed

\thanks{Acknowledgements: many thanks to Yannick Baraud for his suggestions, comments, careful reading of the paper.  We are thankful to Claire Lacour for sending us the source code of the procedure of~\cite{Akakpo2011}.}

\bibliographystyle{apalike}
 \bibliography{biblio}
\end{document}